\documentclass[12pt,reqno]{amsart}

\usepackage{hyperref}
\usepackage[usenames]{color}
\usepackage[utf8]{inputenc} 
\usepackage{float}
\usepackage{bbm}

\usepackage[displaymath, mathlines]{lineno}

\usepackage[english]{babel} 

\usepackage{amssymb, amsmath}
\usepackage{geometry,graphicx}

\usepackage{cite}
\usepackage{enumitem}

\usepackage{soul}

\usepackage{xurl}

\allowdisplaybreaks

\theoremstyle{plain}
\newtheorem{theorem}{Theorem}[section] 

\newtheorem{lemma}[theorem]{Lemma} 

\newtheorem{definition}[theorem]{Definition}

\newtheorem{corollary}[theorem]{Corollary}

\newtheorem{remark}[theorem]{Remark}

\numberwithin{equation}{section} 

\begin{document}

\begin{center}
 \textbf{Characterizations of fractional operators via integral transforms}
\end{center}

\begin{center}
 Daniel Cao Labora $^a$, Marc Jornet $^b$
\end{center}

\begin{center}
$^a$ Department of Statistics, Mathematical Analysis and Optimization, Faculty of Mathematics and CITMAga, Universidade de Santiago de Compostela (USC), Rua Lope Gomez de Marzoa, 6, 15782, Santiago de Compostela, Galicia, Spain. \\
email: daniel.cao@usc.es \\ 
ORCID: 0000-0003-2266-2075
\end{center}

\begin{center}
$^b$ Escuela Superior de Ingenier\'ia y Tecnolog\'ia, Universidad Internacional de La Rioja, Av. La Paz 137, 26006 Logro\~no, Spain. \\
email: marc.jornet@unir.net \\ 
ORCID: 0000-0003-0748-3730
\end{center}

\ \\
\textbf{Abstract.} In 1972, J. S. Lew established a reasonable conjecture regarding an axiomatic characterization for the one-dimensional Riemann-Liouville integral. This conjecture was proved by Cartwright and McMullen in 1978. After that, little further work has been done on this topic, except some extensions for the Stieltjes case in one and several variables. In this paper, we prove the necessity of the axioms established in the conjecture of J. S. Lew using the Cauchy functional equation and Hamel bases. In addition, we give a proof for the characterization in several variables by employing Titchmarsh theorem, as a natural extension of the approach of Cartwright and McMullen. We also provide an alternative version and proof in one and several variables with Laplace transforms and the Cauchy functional equation, weakening parts of the continuity assumption. We show a similar result for the Riesz potential in terms of the Fourier transform. Finally, we illustrate how the theory can be used for characterization in the context of fractional calculus with respect to a non-smooth integrator, based on transmutation and measures. \\
\\
\textit{Keywords: Fractional calculus, Cartwright-McMullen theorem, Laplace transform, Cauchy functional equation, transmutation.} \\
\\
\textit{AMS Classification 2020: 26A33, 26B99, 44A10.}

\tableofcontents

\section{Introduction}

The Riemann-Liouville integral is a very important operator in mathematical analysis and, in particular, in the field of fractional calculus, which consists of the study of derivatives, integrals and differential equations of non-integer order~\cite{diethelm}. Solving a conjecture posed by Lew in 1972, Cartwright and McMullen proved in 1978 that the one-dimensional Riemann-Liouville integral is essentially the unique valid extension of the ordinary integral in a fractional sense, considering its semigroup property and continuity with respect to the parameter~\cite{cart}. Recently, in 2025, Cao Labora developed an extension of the Cartwright-McMullen theorem for the Stieltjes case with the concept of transmutation (algebraic conjugation) in fractional calculus~\cite{cao_jie}, based on his PhD thesis~\cite{tese}. The multidimensional extension of this article with a weight was given by Jornet in~\cite{meu}, where open problems were described at the end: in particular, it was unknown whether the fourth condition in the theorems of~\cite{meu}, concerning dimensionality reduction, is strictly necessary.

\subsection{Summary}

The necessity of the axioms given by Cartwright and McMullen has not been discussed in the literature, although it is an interesting topic. As shown by Cao Labora \cite{cao_jie,tese}, the interpolation and the index law in the real context already imply the uniqueness of the one-dimensional Riemann-Liouville integral for rational orders, and then the continuity with respect to the fractional parameter allows the conclusion. However, is continuity really necessary? This question is intimately connected with additivity together with discontinuity in the Cauchy functional equation; this area of research is classical and involves Zorn's lemma and Hamel bases of $\mathbb{R}$ over $\mathbb{Q}$ \cite[Chapter~3]{saaty}, \cite{aczel}. In the second section of the document, we use these ideas to demonstrate that the properties given by Cartwright and McMullen are indeed necessary.

In the third section, we develop a proof for the natural extension of the Cartwright-McMullen theorem in several variables. This proof is inspired by the techniques presented in \cite{cart}.

In the fourth section, we provide an alternative proof of the Cartwright-McMullen theorem that avoids using Titchmarsh theorem and weakens parts of the continuity assumption. The main idea of the proof is the following. The Laplace transform of the Riemann-Liouville integral can be viewed as a ``fractionalization'' of the Laplace transform of the standard integral. This observation motivates us to use the Laplace transform to establish a uniqueness result that trivially implies the Cartwright-McMullen theorem. Moreover, the Riemann-Liouville integral is related to the beta distribution, and therefore it is natural to characterize it via the moment-generating function, which is essentially the Laplace transform. The characterization is done in two steps, using the theory of Cauchy functional equations. We first assume that the operator is defined via a convolution with a certain kernel and, later, we are able to eliminate such a hypothesis. The fifth section is devoted to the extension of these results to dimensions greater than or equal to two. From the third to the fifth section of this paper, we solve the recent open problem raised in~\cite{meu}.

In the sixth section, we develop a similar procedure in order to obtain an axiomatic characterization for the Riesz potential, but by means of the Fourier transform. In this case, we are only able to provide a proof for the convolution case. 

In the seventh section, we characterize the Riemann-Liouville integral with respect to a strictly increasing integrator, using transmutation. Compared with recent work \cite{cao_jie,meu}, we do not assume a continuously differentiable integrator, because we utilize tools from measure theory such as pushforward measures.

\subsection{Some previous results}

The following sections utilize techniques from the theory of functional equations, which is not typically covered in standard fractional calculus works. Consequently, let us introduce some basic notions and results.
Consider a function $h: \Omega \subseteq \mathbb{R}^n \to \mathbb{R}^n$, where $\Omega$ fulfills the property $x,y \in \Omega \implies x+y \in \Omega$. We say that $h$ satisfies the Cauchy functional equation if the following identity holds:
\begin{equation} \label{Cauchy}
    h(x+y)=h(x)+h(y), \quad \forall x,y \in \Omega.
\end{equation}

It is well known that when $h$ is assumed to be continuous and $\Omega = \mathbb{R}^n$, any solution to \eqref{Cauchy} is a linear function. In fact, it suffices to assume that $h$ is continuous at one point $x^0\in \Omega=\mathbb{R}^n$, because this fact together with~\eqref{Cauchy} implies global continuity. Indeed, since $h(x^0+\epsilon)=h(x^0)+h(\epsilon)\rightarrow h(x^0)$ when $\epsilon\rightarrow 0$, we obtain continuity at $0$, and then $h(y+\epsilon)=h(y)+h(\epsilon)\rightarrow h(y)$, that is, $h$ is continuous at any $y$.

Due to the nature of our problem, we will be interested in a similar result when $\Omega\subseteq (0,\infty)^n$. Therefore, we will take into account the following results from \cite{Kuczma}.

\begin{lemma}[Theorem 4.5.1 in \cite{Kuczma}]
Let $(H, +)$ be a commutative group, and $(S, +)$ a subsemigroup of $(H, +)$. Then the group $G$ generated by $S$ is $G = S - S$.
\end{lemma}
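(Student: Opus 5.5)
We prove the two inclusions $S-S\subseteq G$ and $G\subseteq S-S$ separately. Throughout, $G$ denotes the smallest subgroup of $H$ containing $S$. Equivalently, $G$ is the intersection of all subgroups of $H$ that contain $S$, or the set of all finite sums of elements of $S$ and their inverses.

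For $S-S\subseteq G$, take $s,t\in S\subseteq G$. Since $G$ is a group, it contains $-t$ and hence $s+(-t)=s-t$. So every element of $S-S$ lies in $G$.

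For $G\subseteq S-S$, it suffices to show that $S-S$ is a subgroup of $H$ containing $S$. Minimality of $G$ then gives $G\subseteq S-S$. We check the required properties in turn.

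\begin{enumerate}
\item[(i)] $S-S$ is nonempty, since $S$ is nonempty (as a subsemigroup). In particular, for any $s\in S$ we have $0=s-s\in S-S$.
\item[(ii)] $S-S$ is closed under inverses: $-(s-t)=t-s\in S-S$.
\item[(iii)] $S-S$ is closed under addition. By commutativity,
\[
(s_1-t_1)+(s_2-t_2)=(s_1+s_2)-(t_1+t_2),
\]
and $s_1+s_2,\ t_1+t_2\in S$ because $S$ is closed under $+$.
\item[(iv)] $S\subseteq S-S$. For $s\in S$, write $s=(s+s)-s$ with $s+s\in S$. This avoids assuming $0\in S$.
\end{enumerate}

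Hence $S-S$ is a subgroup of $H$ containing $S$, and therefore $G\subseteq S-S$. Combined with the first inclusion, $G=S-S$.

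The only delicate points are two. Commutativity is essential in step (iii), which is why the hypothesis that $H$ is commutative is needed. In step (iv) we must avoid using $0\in S$, since $S$ is only a semigroup; the trick $s=(s+s)-s$ handles this. There is no serious obstacle beyond these.
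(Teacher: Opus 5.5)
Your proof is correct and complete. The paper gives no proof of this lemma; it quotes it from Kuczma and uses it only through the representation $x=s_1-s_2$ in its sketch of Lemma~\ref{lem_Ku}. Your argument is the standard one: show that $S-S$ is a subgroup containing $S$, using commutativity in (iii) and $s=(s+s)-s$ in (iv). You are right to state nonemptiness of $S$ explicitly, because the claim fails if $\emptyset$ counts as a subsemigroup: then $G=\{0\}$ while $S-S=\emptyset$.
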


\begin{lemma}[Corollary 18.2.1 in \cite{Kuczma}] \label{lem_Ku} Let $(X, +)$ be a commutative group, let $(Y, +)$ be a group, and let $(S, +)$ be a subsemigroup of $(X, +)$ generating $X$. Let $g : S \to Y$ be a homomorphism. Then there exists a unique homomorphism $f : X \to Y$ such that $f_{|S} = g$.
\end{lemma}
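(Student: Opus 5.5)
The plan is to define the extension directly on differences and then check that the definition is consistent. By the preceding lemma (Theorem 4.5.1 in \cite{Kuczma}), every $x \in X$ can be written as $x = s_1 - s_2$ with $s_1, s_2 \in S$, because $S$ generates $X$. I will set
\[
f(x) := g(s_1) - g(s_2).
\]
Here $Y$ is written additively but may be non-commutative, so I will keep the order of the terms fixed throughout, in the form $g(s_1) + (-g(s_2))$.

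\textbf{Well-definedness.} This is the main step. Suppose $s_1 - s_2 = t_1 - t_2$ with $s_i, t_i \in S$. Since $X$ is commutative, this gives $s_1 + t_2 = t_1 + s_2$ as elements of $S$. Applying the homomorphism $g$ yields $g(s_1) + g(t_2) = g(t_1) + g(s_2)$. To conclude $g(s_1) - g(s_2) = g(t_1) - g(t_2)$, I first need to know that the image $g(S)$ is a commutative subset of $Y$. This holds because $g(a) + g(b) = g(a+b) = g(b+a) = g(b) + g(a)$ for $a, b \in S$. With elements that pairwise commute, I can rearrange freely, and in particular their inverses also commute with them. Then
\[
g(s_1) - g(s_2) = g(s_1) + g(t_2) - g(t_2) - g(s_2) = g(t_1) + g(s_2) - g(t_2) - g(s_2) = g(t_1) - g(t_2).
\]
This rearrangement inside the abelian subgroup of $Y$ generated by $g(S)$ is the one delicate point. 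It is where I expect any care to be needed, since $Y$ itself need not be abelian.

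\textbf{Homomorphism, extension and uniqueness.} Write $x = s_1 - s_2$ and $x' = s_1' - s_2'$. Then $x + x' = (s_1 + s_1') - (s_2 + s_2')$, so
\[
f(x + x') = g(s_1) + g(s_1') - g(s_2) - g(s_2').
\]
Using again that the values of $g$ commute, this equals $f(x) + f(x')$. For the extension property, take $s \in S$ and write $s = (s + s) - s$, so that $f(s) = g(s) + g(s) - g(s) = g(s)$. This choice avoids assuming $0 \in S$. For uniqueness, any homomorphism $\tilde f$ with $\tilde f_{|S} = g$ must satisfy $\tilde f(s_1 - s_2) = g(s_1) - g(s_2)$, so $\tilde f = f$ on $S - S = X$.
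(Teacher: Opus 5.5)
Your proof is correct and follows the paper's argument: define $f(s_1-s_2)=g(s_1)-g(s_2)$ using $X=S-S$, derive consistency from $s_1+t_2=t_1+s_2$, and derive additivity from $g$ being a homomorphism. Your version is more careful than the paper's sketch: you justify the rearrangements for a possibly non-abelian $Y$ via the commutativity of $g(S)$, and you explicitly check $f_{|S}=g$ and uniqueness, whereas the paper's ``uniqueness'' paragraph is really the well-definedness check.
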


The proof of Lemma~\ref{lem_Ku} is simple. Given $x\in S$, it can be written as $x=s_1-s_2$, for certain $s_1,s_2\in S$ (not necessarily unique), and one defines $f(x)=g(s_1)-g(s_2)$. If $x=s_1-s_2$ and $z=t_1-t_2$, where $x,z\in X$ and $s_1,s_2,t_1,t_2\in S$, then $f(x+z)=f((s_1+t_1)-(s_2+t_2))=g(s_1+t_1)-g(s_2+t_2)=f(x)+f(z)$, because $g$ is a homomorphism. For uniqueness, if $s_1-s_2=s_3-s_4$ for certain $s_1,s_2,s_3,s_4\in S$, then $s_1+s_4=s_2+s_3$ and $g(s_1)+g(s_4)=g(s_2)+g(s_3)$, obtaining $f(s_1-s_2)=f(s_3-s_4)$.

Since $(\mathbb{R}^n,+)$ is generated by $(\Omega,+)$ with $\Omega=\prod_{j=1}^n (a_j,\infty)\subseteq (0,\infty)^n$, we deduce the following corollary from the two previous results.

\begin{corollary} \label{linear} Any function $h: \Omega=\prod_{j=1}^n (a_j,\infty) \subseteq (0,\infty)^n \to \mathbb{R}^n$ or $h: \Omega=\prod_{j=1}^n [a_j,\infty) \subseteq (0,\infty)^n \to \mathbb{R}^n$ that is continuous at one point and satisfies equation \eqref{Cauchy} has a unique extension to $\mathbb{R}^n$ that is continuous at one point and still satisfies equation \eqref{Cauchy}. In particular, the extension of $h$ is a linear function and, consequently, so is $h$.
\end{corollary}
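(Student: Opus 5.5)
We apply Lemma~\ref{lem_Ku} with $X=(\mathbb{R}^n,+)$, $Y=(\mathbb{R}^n,+)$ and $S=\Omega$, and then use continuity at a point to force linearity.

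\textbf{Setup.} First, $\Omega$ is a subsemigroup of $\mathbb{R}^n$. If $x,y\in\Omega$, then $x_j+y_j>x_j>a_j$ because $y_j>a_j\ge 0$, so $x+y\in\Omega$; the closed case $[a_j,\infty)$ is analogous. Second, $\Omega$ generates $\mathbb{R}^n$, since $\Omega-\Omega=\mathbb{R}^n$. Indeed, fix $p\in\Omega$. For any $z\in\mathbb{R}^n$, choose $t>0$ large enough that $q=p+t\mathbf{1}+z$ has all coordinates $>a_j$ (or $\ge a_j$). Then $p+t\mathbf{1}\in\Omega$ and $z=q-(p+t\mathbf{1})$. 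Since $h$ satisfies \eqref{Cauchy}, it is a semigroup homomorphism $\Omega\to\mathbb{R}^n$. Lemma~\ref{lem_Ku} then gives a unique additive extension $f:\mathbb{R}^n\to\mathbb{R}^n$, namely $f(s_1-s_2)=h(s_1)-h(s_2)$.

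\textbf{Continuity.} Let $h$ be continuous at $x^0\in\Omega$. We show $f$ is continuous at $x^0$; this is the only genuine step, and it is where the obstacle lies. In the open case, $x^0$ is an interior point of $\Omega$, so $f=h$ near $x^0$ and continuity transfers immediately. In the closed case $x^0$ may lie on the boundary, so continuity of $h$ at $x^0$ is only relative to $\Omega$, and an extra argument is needed. Pick $p$ in the interior of $\Omega$. For $\epsilon\in\mathbb{R}^n$ small, write $x^0+\epsilon=(x^0+p+\epsilon)-p$. For $\epsilon$ small, $x^0+p+\epsilon\in\Omega$ because $p$ is interior. Hence
\[
f(x^0+\epsilon)=h(x^0+p+\epsilon)-h(p).
\]
Also, for $\epsilon$ small enough that $p+\epsilon\in\Omega$,
\[
h(x^0+p+\epsilon)=h(x^0)+h(p+\epsilon)
\]
by \eqref{Cauchy}. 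This moves the question to continuity of $h$ at the interior point $p$. We obtain it from continuity at $x^0$ as follows. For $\delta\in\mathbb{R}^n$ with $p+\delta\in\Omega$, additivity gives
\[
h(p+\delta)=h(x^0+p+\delta)-h(x^0),
\]
and, writing $x^0+p+\delta=(x^0+\delta')+(p+\delta-\delta')$ for suitable small $\delta'$ that keeps $x^0+\delta'$ inside $\Omega$ (for example, $\delta'$ equal to $\delta$ on the coordinates where $\delta_j\ge0$ and $0$ elsewhere), the increment is controlled by $h(x^0+\delta')-h(x^0)\to0$ plus a term at a shifted interior point. If this bookkeeping becomes messy, a cleaner route is available. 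Since $f$ is additive, continuity of $f$ at any single point suffices, and $f$ is bounded in a neighbourhood: $h$ is bounded near $x^0$ within $\Omega$, so $f$ is bounded on the set $(\text{nbhd of }x^0 \text{ in }\Omega)-p$, which contains an open set. A classical result then says that an additive function bounded on an open set is linear.

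\textbf{Conclusion.} Once $f$ is additive and continuous at one point, the argument recalled before Lemma~\ref{lem_Ku} makes it continuous everywhere. Additivity gives $f(qx)=qf(x)$ for $q\in\mathbb{Q}$, and continuity extends this to real scalars, so $f$ is $\mathbb{R}$-linear. Uniqueness comes from Lemma~\ref{lem_Ku}, and $h=f|_\Omega$ is the restriction of a linear map.
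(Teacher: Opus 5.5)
Your proof is correct and follows the paper's route: $\Omega$ is a subsemigroup generating $\mathbb{R}^n$, Lemma~\ref{lem_Ku} gives the unique additive extension, and continuity at one point then forces linearity. The paper leaves the transfer of continuity to the extension implicit, and your proof spells it out. That transfer is trivial in the open case but is a real step in the closed case when the continuity point lies on the boundary.

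Your first bookkeeping attempt does not close as written, since it only reduces continuity at $p$ to the behaviour of $h(p-\delta^-)$. Your boundedness-on-an-open-set argument does work. More directly, since $x^0+[0,\infty)^n\subseteq\Omega$, you get $f(\epsilon)=h(x^0+\epsilon)-h(x^0)\to 0$ as $\epsilon\to 0$ within $[0,\infty)^n$. Hence $f(\epsilon)=f(\epsilon^+)-f(\epsilon^-)\to 0$ for arbitrary $\epsilon\to 0$.
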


Now we move to the Cauchy functional equation on a restricted domain. In particular, we will be interested in $\Omega=(0,n)\subseteq (0,\infty)$, where $n$ is a natural number. Consider a function $h:\Omega=(0,n)\rightarrow\mathbb{R}$. We say that $h$ satisfies the (restricted) Cauchy functional equation if 
\begin{equation} h(x+y)=h(x)+h(y),\quad \forall x,y\in\Omega\text{ such that }x+y\in\Omega. 
\label{cfd2}
\end{equation}

\begin{lemma} \label{cfr}
If $h:\Omega=(0,n)\rightarrow\mathbb{R}$ is a function that is continuous at one point and satisfies equation~\eqref{cfd2}, then $h$ is linear.
\end{lemma}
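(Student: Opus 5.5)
The plan is to reduce Lemma~\ref{cfr} to the classical extension argument for Cauchy's equation on an interval, in three stages.

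First, on $(0,n)$ I would show that $h(qx)=q\,h(x)$ for every positive rational $q$ and every $x$ with $x,qx\in(0,n)$. Induction on \eqref{cfd2} gives $h(mx)=m\,h(x)$ whenever $mx<n$; here each partial sum $kx$ lies in $(0,n)$, so every application of \eqref{cfd2} is legitimate. Applying this to $y=x/k$ gives $h(x/k)=h(x)/k$, and combining the two yields the claim for $q=m/k$. Next I would extend $h$ to an additive function on all of $\mathbb{R}$. The natural definition is $H(x)=N\,h(x/N)$ for $x>0$, with $N$ any integer such that $x/N<n$. The rational homogeneity shows this does not depend on $N$: for $N,M$ both admissible, compare each with $NM$, noting $x/(NM)\in(0,n)$. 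To see that $H$ is additive on $(0,\infty)$, take $x,y>0$ and a common $N$ with $(x+y)/N<n$, and apply \eqref{cfd2} to $x/N,\,y/N$. Then $H$ is a homomorphism on the semigroup $(0,\infty)$, and Lemma~\ref{lem_Ku} extends it uniquely to an additive $\tilde H:\mathbb{R}\to\mathbb{R}$ agreeing with $h$ on $(0,n)$ (take $N=1$).

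Second, I would transfer continuity. Suppose $h$ is continuous at $x^0\in(0,n)$. Then $\tilde H$ coincides with $h$ on a neighbourhood of $x^0$, so $\tilde H$ is continuous at $x^0$. The argument from the introduction (or Corollary~\ref{linear}) then shows that $\tilde H$ is continuous everywhere, hence $\tilde H(x)=cx$. Restricting to $(0,n)$ gives $h(x)=cx$, which is the claim.

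The main obstacle is the well-definedness and additivity of the extension: one must check at every step that all arguments stay inside $(0,n)$, since \eqref{cfd2} only applies there. I would handle this by always scaling down by a sufficiently large integer before invoking \eqref{cfd2}, so that all relevant points and their sums lie in $(0,n)$.
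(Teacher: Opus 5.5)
Your proof is correct, but it builds the additive extension to $(0,\infty)$ differently from the paper. The paper never uses homogeneity. It extends $h$ to $[n,2n)$ by $h(\eta):=h(\alpha)+h(\beta)$ for any splitting $\eta=\alpha+\beta$ with $\alpha,\beta\in(0,n)$. It then checks that this is independent of the splitting and additive; the only real case is $\alpha\geq n>\beta$, handled by moving a small piece $\alpha_2$ from one summand to the other. The construction is repeated on $[2n,4n)$, $[4n,8n)$, etc., before applying Corollary~\ref{linear}.

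You instead first derive $h(qx)=q\,h(x)$ for positive rationals and then define the extension in one step by the dilation $H(x)=N\,h(x/N)$. Well-definedness and additivity on all of $(0,\infty)$ then reduce to a single rescaling into $(0,n)$. There is no case analysis and no dyadic iteration, which the paper only sketches as ``analogously''.

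Your route is shorter and carries over verbatim to boxes $\prod_j(0,n_j)\subseteq(0,\infty)^k$, but it relies on dividing by integers. The paper's splitting argument uses only sums, differences and the order structure of the interval.

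The final step is the same in both: Lemma~\ref{lem_Ku} (equivalently Corollary~\ref{linear}), together with the fact that the extension agrees with $h$ on the open set $(0,n)$, so continuity at $x^0$ transfers. You state this continuity transfer explicitly, whereas the paper leaves it tacit.
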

\begin{proof}
We extend $f$ to an additive function on $(0,\infty)$, so that Corollary~\ref{linear} can be applied.

Let $\eta\in [n,2n)$. We can write $\eta=\alpha+\beta$, where $0<\alpha<n$ and $0<\beta<n$. Then we define the extension $f(\eta):=f(\alpha)+f(\beta)$.

Let us see that this definition on $[n,2n)$ is well-posed. If $\eta$ were also $\tilde{\alpha}+\tilde{\beta}$, where $0<\tilde{\alpha}<n$ and $0<\tilde{\beta}<n$, then we consider without loss of generality that $\alpha<\tilde{\alpha}$ and $\tilde{\beta}<\beta$. By additivity on $(0,n)$, we know that $f(\tilde{\alpha})=f(\tilde{\alpha}-\alpha)+f(\alpha)$ and $f(\beta)=f(\beta-\tilde{\beta})+f(\tilde{\beta})$. Since $\beta-\tilde{\beta}=\tilde{\alpha}-\alpha$, this implies $f(\tilde{\alpha})+f(\tilde{\beta})=f(\alpha)+f(\beta)$, as desired.

Let us check that $f$ is additive on $[n,2n)$. Suppose that $\alpha\geq n$, $\beta<n$, and $\alpha+\beta<2n$. Since $0\leq \alpha-n<n-\beta$, we can pick a number $\alpha_2$ such that $\alpha-n<\alpha_2<n-\beta$, that is, $\alpha-\alpha_2<n$ and $\beta+\alpha_2<n$. Thus, by the definition of $f$, $f(\alpha+\beta)=f(\alpha-\alpha_2)+f(\beta+\alpha_2)$. On the one hand, $f(\beta+\alpha_2)=f(\beta)+f(\alpha_2)$, by additivity on $(0,n)$. On the other hand, again by the definition of $f$, $f(\alpha)=f(\alpha_2)+f(\alpha-\alpha_2)$. Therefore, we arrive at $f(\alpha+\beta)=f(\alpha)+f(\beta)$, as wanted.

Once we have the extension to $[n,2n)$, we can proceed analogously for $[2n,4n)$, $[4n,8n)$, etc.
\end{proof}

\section{Necessity of the axioms in the Cartwright-McMullen theorem} \label{sec_neces}

We write $I_a^\alpha$ for the one-dimensional Riemann-Liouville integral with left origin at $a\in\mathbb{R}$, for $\alpha\in (0,\infty)$. In this section, we consider the following two versions of the Cartwright-McMullen theorem that characterize $I_a^\alpha$.

\begin{theorem}[Cartwright-McMullen, real version] \label{t1} \cite[Theorem~2.18]{tese}, \cite[Theorem~2.1]{cao_jie}
Given $a\in\mathbb{R}$, there is only one family of operators $(J_a^\alpha)_{\alpha>0}$ on $\mathrm{L}^1(a,T)$ that satisfies the following conditions:
\begin{itemize}
\item[1.] $J_a^1=I_a^1$;
\item[2.] $J_a^\alpha J_a^\beta=J_a^{\alpha+\beta}$ for all $\alpha,\beta>0$ (index-law property);
\item[3.] The family is continuous with respect to the parameter, $\alpha\mapsto J_a^\alpha$ from $(0,\infty)$ into the set of continuous linear maps $\mathcal{B}(\mathrm{L}^1(a,T))$.
\end{itemize}
We find that $J_a^\alpha=I_a^\alpha$ is the Riemann-Liouville integral on $\mathrm{L}^1(a,T)$, for all $\alpha\in (0,\infty)^n$.
\end{theorem}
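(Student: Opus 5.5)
Existence is immediate: the Riemann-Liouville family $(I_a^\alpha)_{\alpha>0}$ satisfies all three conditions. Condition 1 holds by definition. Condition 2 is the classical semigroup property, proved by Fubini and the Beta integral. For condition 3, Young's inequality gives $\|I_a^\alpha-I_a^\beta\|_{\mathcal{B}(\mathrm{L}^1)}\le \|k_\alpha-k_\beta\|_{\mathrm{L}^1(0,T-a)}$ with $k_\alpha(t)=t^{\alpha-1}/\Gamma(\alpha)$, and this tends to $0$ as $\beta\to\alpha$ by dominated convergence. So the real work is uniqueness, which I would do in two steps: first show $J_a^q=I_a^q$ for every rational $q>0$, then pass to real orders by continuity.

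\textbf{Step 1 (integer orders).} By condition 2 and induction, $J_a^n=(J_a^1)^n=(I_a^1)^n=I_a^n$ for every $n\in\mathbb{N}$.

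\textbf{Step 2 (rational orders).} Fix $m\in\mathbb{N}$ and set $A=J_a^{1/m}$ and $B=I_a^{1/m}$. The index law gives $A^m=J_a^1=I_a^1=B^m$. It also gives $AJ_a^\beta=J_a^\beta A$ for all $\beta$, so in particular $A$ commutes with $I_a^1$.

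I would then show that any bounded operator on $\mathrm{L}^1(a,T)$ commuting with the Volterra operator $V=I_a^1$ is a convolution $f\mapsto \mu*f$ with a finite measure $\mu$ on $[0,T-a]$. The argument is a Titchmarsh-type step: testing on indicators $\chi_{(s,T)}=V\delta_s$, one obtains translation-commutation for the truncated shifts, and this pins down the operator. I would also try to argue directly in the convolution algebra instead. Granting this, $A$ and $B$ are convolutions with $\mu$ and $k_{1/m}$, and $\mu^{*m}=k_{1/m}^{*m}=k_1$. Here the real index law forces the $m$-th root to be unique. I would prove this with the Titchmarsh convolution theorem: write $\mu^{*m}-k^{*m}=(\mu-k)*\sum_j \mu^{*j}*k^{*(m-1-j)}=0$ with $k=k_{1/m}$. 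The sum has support starting at $0$, because all the factors are positive near $0$. Positivity of $\mu$ is the delicate point; alternatively, Titchmarsh applied to $\mu^{*m}=k_1$ gives $\inf\operatorname{supp}\mu=0$. In either case Titchmarsh yields $\mu=k$.

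Hence $J_a^{1/m}=I_a^{1/m}$, and then $J_a^{n/m}=(J_a^{1/m})^n=I_a^{n/m}$.

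\textbf{Step 3 (real orders).} Given $\alpha>0$, choose rationals $q_k\to\alpha$. Condition 3 gives $J_a^\alpha=\lim J_a^{q_k}=\lim I_a^{q_k}=I_a^\alpha$ in operator norm, using the continuity of $I_a^\alpha$ proved above.

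The main obstacle is Step 2, namely the uniqueness of $m$-th roots of $I_a^1$ within the commutant. The sign issue matters: $-I_a^{1/2}$ is also a square root of $I_a^1$ and commutes with it. What excludes it is compatibility with the whole family, since $J_a^{1/2}=(J_a^{1/4})^2$. I would first establish the commutant representation via Titchmarsh, then use the identity $J_a^{1/m}=(J_a^{1/(2m)})^2$ to extract a positivity/sign normalization of $\mu$ near the origin. Only then would I apply the Titchmarsh factorization argument above.
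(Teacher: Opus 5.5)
There is a genuine gap in Step~2, at exactly the point you flag, and neither of your two justifications closes it. To apply Titchmarsh to $(\mu-k)*S=0$, with $S=\sum_{j=0}^{m-1}\mu^{*j}*k^{*(m-1-j)}$ and $k=k_{1/m}$, you need $0\in\operatorname{supp}S$.

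\begin{itemize}
\item The claim that $\inf\operatorname{supp}\mu=0$ suffices is false. For even $m$ the real measure $\mu=-k$ satisfies $\mu^{*m}=k_1$ and $\inf\operatorname{supp}\mu=0$, yet $S=\sum_{j}(-1)^j k^{*(m-1)}=0$.
\item The other route, positivity of $\mu$ near $0$, is never proved. It also cannot be extracted from $\mu_{1/m}=\mu_{1/(2m)}*\mu_{1/(2m)}$ \emph{before} the Titchmarsh step, because a convolution square of a real kernel carries no sign information near the origin. For example, take $\nu(t)=t^{\gamma-1}\cos(\omega\log t)$ with $\gamma>0$ and $\omega\neq0$. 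Writing $\nu=\mathrm{Re}\,t^{a-1}$ with $a=\gamma+\mathrm{i}\omega$, one finds $\nu*\nu(t)=\tfrac12 t^{2\gamma-1}\bigl[\mathrm{Re}\bigl(B(a,a)t^{2\mathrm{i}\omega}\bigr)+B(a,\bar a)\bigr]$. This is negative at points arbitrarily close to $0$, since $|\Gamma(2a)|<\Gamma(2\gamma)$.
\end{itemize}

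The half-order identity fixes the sign only once you already know $\mu_{1/(2m)}$ up to sign. That is the root-uniqueness statement itself, so your plan, in the order you give it, is circular.

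The missing idea is to factor $x^m-1$ completely over $\mathbb{C}$; this is how the paper argues (proof of Theorem~\ref{CartMc_several} with $n=1$).
\begin{itemize}
\item Since all operators commute, $0=J_a^1-I_a^1=\prod_{r=1}^m\bigl(J_a^{1/m}-\eta_r I_a^{1/m}\bigr)$, where the $\eta_r$ are the $m$-th roots of unity. So the convolution of the $m$ factors $\mu-\eta_r k$ vanishes on $[0,T-a]$.
\item By Titchmarsh, the left endpoints of the supports of these factors add up to at least $T-a$.
\item Two different factors cannot both vanish on a common $[0,\lambda]$ with $\lambda>0$, because their difference $(\eta_j-\eta_i)k$ does not.
\item Hence one factor vanishes on all of $[0,T-a]$, so $\mu=\eta_r k$, and realness forces $\eta_r=\pm1$.
\end{itemize}
Only at this point does your half-order idea work: the same argument at order $1/(2m)$ gives $\mu_{1/m}=(\pm k_{1/(2m)})^{*2}=k_{1/m}$.

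An alternative that stays closer to your factorization: for odd $m$ every $\eta_r\neq1$ is non-real. Then no factor of $S=\prod_{\eta_r\neq1}(\mu-\eta_r k)$ can vanish near $0$ when $\mu$ is real, so Titchmarsh gives $0\in\operatorname{supp}S$ and your argument goes through. Since rationals with odd denominators are dense, Step~3 then finishes without any sign discussion.

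Finally, your commutant lemma is heavier than needed, and its sketch via $V\delta_s$ is only heuristic, since $\delta_s\notin\mathrm{L}^1$. The paper avoids it as follows. By density of polynomial kernels, $J_a^{1/m}$ commutes with all convolutions, so $I_a^1J_a^{1/m}=J_a^{1/m}I_a^1$ is convolution with the $\mathrm{L}^1$ kernel $J_a^{1/m}1$. You then run the root argument for this operator, whose $m$-th power is $I_a^{m+1}$, and cancel the injective $I_a^1$.
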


\begin{theorem}[Cartwright-McMullen, complex version] \label{t2} \cite{cart}
Given $a\in\mathbb{R}$, there is only one family of operators $(J_a^\alpha)_{\alpha>0}$ on $\mathrm{L}^1((a,T),\mathbb{C})$ that satisfies the following conditions:
\begin{itemize}
\item[A.] $J_a^1=I_a^1$;
\item[B.] $J_a^\alpha J_a^\beta=J_a^{\alpha+\beta}$ for all $\alpha,\beta>0$ (index-law property);
\item[C.] The family is continuous with respect to the parameter, $\alpha\mapsto J_a^\alpha$ from $(0,\infty)$ into the set of continuous linear maps $\mathcal{B}(\mathrm{L}^1((a,T),\mathbb{C}))$;
\item[D.] If $y\geq0$, then $J_a^\alpha y\geq0$ for all $\alpha>0$.
\end{itemize}
We find that $J_a^\alpha=I_a^\alpha$ is the Riemann-Liouville integral on $\mathrm{L}^1((a,T),\mathbb{C})$, for all $\alpha\in (0,\infty)^n$.
\end{theorem}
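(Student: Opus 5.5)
This is the complex version of Cartwright–McMullen (Theorem~\ref{t2}). Existence is classical: the Riemann–Liouville family $I_a^\alpha$ satisfies A--D. For uniqueness, let $(J_a^\alpha)$ satisfy A--D. By translation we may take $a=0$.

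\textbf{Step 1: rational orders.} For $\alpha=1/m$, condition B gives $(J^{1/m})^m=J^1=I^1$. The operator $I^{1/m}$ is also an $m$-th root of $I^1$. The task is to show that $I^1$ has only one such root in the class singled out by B--D. Each $J^\alpha$ commutes with $J^1=I^1$, since $J^\alpha J^1=J^{\alpha+1}=J^1J^\alpha$. A bounded operator on $\mathrm{L}^1(0,T)$ commuting with the Volterra integral is a convolution with a measure $\mu_\alpha$ on $[0,T]$. To see this, apply it to indicator functions $\chi_{(s,T)}=$ translates of $I^1\delta$, and pass to the limit, as Cartwright and McMullen do. By positivity (D) each $\mu_\alpha$ is a nonnegative measure. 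By B the measures satisfy $\mu_\alpha*\mu_\beta=\mu_{\alpha+\beta}$ on $[0,T]$, with $\mu_1=\chi_{[0,T]}\,dt$.

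\textbf{Step 2: identification via Titchmarsh and positivity.} Let $\mu=\mu_{1/m}$, so $\mu^{*m}=t^0_+/\Gamma(1)$ on $[0,T]$. I would show that $\mu$ has no mass near $0$ other than through its density. Titchmarsh's convolution theorem gives $\inf\operatorname{supp}\mu^{*m}=m\inf\operatorname{supp}\mu$, which forces $\inf\operatorname{supp}\mu=0$.

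Positivity matters because roots of unity could otherwise twist the candidate. In the complex setting, $\omega\, t^{1/m-1}/\Gamma(1/m)$ with $\omega^m=1$ is also an $m$-th root, and D excludes it. More carefully, I would pass to Laplace-type transforms on $[0,T]$, or use the Titchmarsh-based argument of Cartwright–McMullen. The difference $\nu=\mu-\kappa_{1/m}$, with $\kappa_\alpha=t^{\alpha-1}/\Gamma(\alpha)$, satisfies
\[
0=\mu^{*m}-\kappa_{1/m}^{*m}=\nu*\sum_{j}\mu^{*j}*\kappa_{1/m}^{*(m-1-j)} .
\]
The second factor is a positive measure whose support starts at $0$. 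Titchmarsh then gives $\nu=0$ on $[0,T]$. Positivity is what guarantees that this sum is nonzero near $0$; for complex roots there could be cancellation, and this is exactly where D enters.

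Hence $J^{1/m}=I^{1/m}$. Condition B then extends this to all positive rationals: $J^{p/q}=(J^{1/q})^p=I^{p/q}$.

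\textbf{Step 3: density.} For irrational $\alpha$, choose rationals $r_k\to\alpha$. By C, $J^{r_k}\to J^\alpha$ in $\mathcal{B}(\mathrm{L}^1)$. Likewise $I^{r_k}\to I^\alpha$ by the known continuity of the Riemann–Liouville family. Since $J^{r_k}=I^{r_k}$, we get $J^\alpha=I^\alpha$.

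The main obstacle is Step 2, specifically the commutant characterization combined with the Titchmarsh argument. One must make sure the measures have no atom at $0$ interfering. Such an atom would give $\mu^{*m}$ an atom, contradicting $\mu^{*m}=dt$.
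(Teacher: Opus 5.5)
Your proof is correct. Its skeleton matches the Cartwright--McMullen argument that the paper cites and reproduces in Section~3: commutation with convolutions, Titchmarsh at rational orders, and C for irrational orders. You carry out the two key steps differently, though.

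\emph{Convolution representation.} You represent each $J^\alpha$ directly as convolution with a finite measure $\mu_\alpha$. This is true, but your indicator-function sketch is thin. A clean justification is the paper's density argument: $J^\alpha$ commutes with $I^k$, hence with every $R_h$, $h\in\mathrm{L}^1$. Then $J^\alpha f=\lim_n f\ast J^\alpha e_n$ for an approximate identity $e_n$, and $\mu_\alpha$ is a weak-$*$ cluster point of the bounded sequence $J^\alpha e_n$.

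The paper avoids measures with a smoothing trick. $I^m\circ J^{m/p}$ (or $I^1\circ J^\alpha$) is convolution with the $\mathrm{L}^1$ function $J^{m/p}(g_m)$ (resp.\ $J^\alpha 1$). So Titchmarsh is only needed for functions, and $I^m$ is cancelled at the end by injectivity. Your argument transfers verbatim to that setting: $k=J^{1/m}1\geq 0$ satisfies $k^{\ast m}=\kappa_{m+1}=\kappa_{1+1/m}^{\ast m}$.

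\emph{Identification step.} The paper factors $J^m-I^m$ over the roots of unity and uses Titchmarsh to force one factor $K-\eta_r g$ to vanish. It then selects $\eta_r=1$: realness leaves $\pm1$, and the index law at half the order excludes $-1$. You instead write $\mu^{\ast m}-\kappa_{1/m}^{\ast m}=\nu\ast\sigma$ with $\sigma=\sum_j\mu^{\ast j}\ast\kappa_{1/m}^{\ast(m-1-j)}$. Positivity makes $\sigma$ a nonnegative measure whose support contains $0$. A single use of Titchmarsh then gives $\nu=0$ on $[0,T)$, either in Lions' distributional form or after convolving once more with $\kappa_1$.

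This is shorter and pinpoints exactly where D is used, but it needs D at full strength. Under the weaker realness condition D' discussed in the paper, $\sigma$ can cancel: for even $m$ and $\mu=-\kappa_{1/m}$ one gets $\sigma=0$. In that setting the roots-of-unity and index-law step would still be needed.

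Finally, the first half of your Step~2 (the infimum of the support, atoms at $0$, the hedge about Laplace transforms) is superfluous once the $\nu\ast\sigma$ argument is in place.
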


The incorporation of assumption~D in the complex scenario is based on certain roots of unity that appear in the proofs of the previous theorems. Condition~D ensures that only the root $1$ is considered after applying Titchmarsh theorem. In fact, as justified in~\cite{cao_jie}, the root $-1$ can be discarded due to the index law, hence D could be replaced by a weaker condition D' stating ``If $y([a,T])\subseteq \mathbb{R}$, then $J_a^\alpha y((a,T))\subseteq\mathbb{R}$ for all $\alpha>0$''.

Another remark is that with 1 and 2 or with A, B and D', one can prove that $J_a^\alpha=I_a^\alpha$ for all rational orders $\alpha>0$. Later, with 3 or C, the equality is extended to irrational orders $\alpha>0$. It is relevant to study whether 3 and C are necessary for this extension, because equality for rational orders is already a strong condition.

We study whether all properties 1--3 and A--D are actually necessary to characterize the Riemann-Liouville integral. 

\noindent
\textbf{Properties 1 and 3 hold, but 2 not:}

A simple example is $J_a^\alpha = \alpha I_a^1$.

\noindent
\textbf{Properties 2 and 3 hold, but 1 not:}

Take $J_a^\alpha= I_a^{2\alpha}$ or $J_a^\alpha=2^\alpha  I_a^{\alpha}$.

\noindent
\textbf{Properties A, B and C hold, but D not:}

Consider $J_a^\alpha=\mathrm{e}^{2\pi \mathrm{i}\alpha}  I_a^{\alpha}$, where $\mathrm{i}$ is the imaginary unit.

\noindent
\textbf{Properties 1 and 2 hold, but 3 not:}

Note that the index rule is related to \textit{additivity}, while property~3 deals with \textit{continuity}. Therefore, we are looking for a function or operator that is additive but not continuous. This topic is classical in the context of the Cauchy functional equation. We adapt the ideas of Acz\'el and Erd\"os from 1965~\cite{aczel}. The general theory on the one-dimensional Cauchy functional equation is elaborated in Saaty's book \cite[Chapter~3]{saaty}.

Let $\mathcal{H}=(b_\lambda)_{\lambda\in\Lambda}$ be a Hamel basis of $\mathbb{R}$ over $\mathbb{Q}$, where $\Lambda$ is a nondenumerable set of indices and two elements of $\mathcal{H}$ are $\pi$ and $\pi+1$ (note that in that case, no rational number can be an element of the basis; otherwise, there would be linear dependence). This basis is shown to exist by Zorn's lemma or, equivalently, transfinite induction. Every $\alpha>0$ is written uniquely as $\alpha=q_1b_{\lambda_1}+\ldots+q_s b_{\lambda_s}$, where $q_1,\ldots,q_s\in\mathbb{Q}$, $\lambda_1,\ldots,\lambda_s\in\Lambda$ and $s\in\mathbb{N}$ depend on $\alpha$. We notice that $b_\lambda$ or $q_j$ may be negative. We also notice that, if $\alpha=1$, then $1=1\cdot (\pi+1)+(-1)\cdot \pi$, that is, we have two $q_j$'s of values $1$ and $-1$, respectively; in particular, the sum of the rational coefficients is $0$.

With the representation of $\alpha$, let $J_a^\alpha=2^{q_1+\ldots+q_s} I_a^{\alpha}$. 

We have $J_a^1=2^{1-1} I_a^{1}=I_a^1$. This is property~1. In particular, for all $\gamma\in\mathbb{Q}$, we obtain $J_a^\gamma= I_a^\gamma$, because $\gamma=\gamma (\pi+1)+(-\gamma)\pi$. This fact is consistent with the proof of the real version of the Cartwright-McMullen theorem before using the third property \cite{tese,cao_jie}.

If $\alpha=q_1b_{\lambda_1}+\ldots+q_s b_{\lambda_s}$ and $\tilde{\alpha}=\tilde{q}_1 b_{\tilde{\lambda}_1}+\ldots+\tilde{q}_r b_{\tilde{\lambda}_r}$, then $J_a^{\alpha}J_a^{\tilde{\alpha}}=2^{q_1+\ldots+q_s} I_a^{\alpha} \circ 2^{\tilde{q}_1+\ldots+\tilde{q}_r} I_a^{\tilde{\alpha}}=2^{q_1+\ldots+q_s+\tilde{q}_1+\ldots+\tilde{q}_r} I_a^{\alpha+\tilde{\alpha}}=J_a^{\alpha+\tilde{\alpha}}$. This is property~2.

Let us see that property~3 does not hold. Suppose that $J_a^\alpha$ is continuous with respect to $\alpha$. The real maps $\alpha\mapsto \|J_a^\alpha\|$ or $\alpha\mapsto \|J_a^\alpha 1\|$ are then continuous on $(0,\infty)$, where $\|\cdot\|$ is the norm. Observe that $\|J_a^{\alpha}\|=\|J_a^\alpha 1\|=2^{q_1+\ldots+q_s} (T-a)^{\alpha}/\Gamma(\alpha+1)$, so $\alpha\mapsto 2^{q_1+\ldots+q_s}$ is necessarily continuous. However, its image is countable; hence a disconnected set, and the map cannot be continuous.

With these results, we obtain the following theorem.

\begin{theorem} \label{th_main}
The assumptions of Theorem~\ref{t1} and Theorem~\ref{t2} are necessary. (The necessity of condition~3 or, equivalently, C, considers the validity of Zorn's lemma in the proof.)
\end{theorem}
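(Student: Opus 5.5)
The plan is to assemble the counterexamples listed above into a formal argument. "Necessary" here means: for each condition, there is a family $(J_a^\alpha)_{\alpha>0}$ that satisfies all the other conditions of the relevant theorem and still differs from $(I_a^\alpha)_{\alpha>0}$. All examples rely on the semigroup law $I_a^\alpha I_a^\beta=I_a^{\alpha+\beta}$ and on the continuity of $\alpha\mapsto I_a^\alpha$ in $\mathcal{B}(\mathrm{L}^1(a,T))$, which hold because Theorems~\ref{t1} and \ref{t2} are valid. Each example must keep every other axiom intact and fail the targeted one.

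For Theorem~\ref{t1}, I would treat the conditions in turn.
\begin{itemize}
\item Condition~2: take $J_a^\alpha=\alpha I_a^1$. Then $J_a^1=I_a^1$ and $\alpha\mapsto J_a^\alpha$ is continuous. However, $J_a^{1/2}J_a^{1/2}=\tfrac14 I_a^2\neq I_a^1=J_a^1$, since $I_a^2 1=(t-a)^2/2$ is not a multiple of $t-a$.
\item Condition~1: take $J_a^\alpha=2^\alpha I_a^\alpha$. The index law holds because $2^\alpha 2^\beta=2^{\alpha+\beta}$. Continuity holds because the product of the continuous scalar map $\alpha\mapsto 2^\alpha$ and the continuous operator-valued map $\alpha\mapsto I_a^\alpha$ is continuous. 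But $J_a^1=2I_a^1\neq I_a^1$.
\item Condition~3: use the Hamel-basis construction, with a basis containing $\pi$ and $\pi+1$, and set $J_a^\alpha=2^{\phi(\alpha)}I_a^\alpha$. Here $\phi(\alpha)$ is the sum of the rational coordinates of $\alpha$. Because coordinates are additive, $\phi$ is a $\mathbb{Q}$-linear, hence additive, map. Since $1=(\pi+1)-\pi$, we get $\phi(1)=0$. This gives conditions~1 and~2.
\end{itemize}

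For Theorem~\ref{t2}, the same examples work verbatim with complex $\mathrm{L}^1$, and they also preserve positivity (condition~D). This is because the scalars $\alpha$, $2^\alpha$ and $2^{\phi(\alpha)}$ are positive and $I_a^\alpha$ preserves positivity. For condition~D, take $J_a^\alpha=\mathrm{e}^{2\pi\mathrm{i}\alpha}I_a^\alpha$. Conditions~A, B and C follow as before, since $\alpha\mapsto\mathrm{e}^{2\pi\mathrm{i}\alpha}$ is a continuous character. Positivity fails: for $\alpha=1/2$, $J_a^{1/2}1=-I_a^{1/2}1$, which is strictly negative on $(a,T)$. So each of A--D is necessary.

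The main obstacle is the failure of continuity for the Hamel example, together with the bookkeeping that $\phi$ is well defined. I would argue as follows. Continuity of $\alpha\mapsto J_a^\alpha$ would make $\alpha\mapsto\|J_a^\alpha 1\|_{\mathrm{L}^1}=2^{\phi(\alpha)}(T-a)^{\alpha+1}/\Gamma(\alpha+2)$ continuous. Dividing by the continuous positive factor $(T-a)^{\alpha+1}/\Gamma(\alpha+2)$ would then make $2^{\phi(\alpha)}$ continuous on $(0,\infty)$. Its image is a subset of $\{2^q:q\in\mathbb{Q}\}$, which is countable. A continuous function on an interval with countable image must be constant, so $\phi$ would be constant on $(0,\infty)$. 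But $\phi(1)=0$, while $\phi(\pi)=-1$ and $\pi>0$, a contradiction. The existence of the Hamel basis uses Zorn's lemma, which is why the statement flags this dependence. Collecting all the cases proves the theorem.
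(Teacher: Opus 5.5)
Your proposal is correct and follows essentially the paper's argument. You use the same four families, $\alpha I_a^1$, $2^\alpha I_a^\alpha$, $\mathrm{e}^{2\pi\mathrm{i}\alpha}I_a^\alpha$ and the Hamel-basis operator $2^{\phi(\alpha)}I_a^\alpha$, and rule out continuity via the countable image of $\alpha\mapsto 2^{\phi(\alpha)}$; you are even a bit more careful than the paper in checking condition~D for the complex versions and in showing that $\phi$ is non-constant. One small slip: $\pi$ is itself a basis element, so $\phi(\pi)=1$ rather than $-1$, which still differs from $\phi(1)=0$ and gives the same contradiction.
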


Although the previous result shows that all hypotheses are necessary, it can be the case that if we restrict the candidates to a certain family of operators, some hypotheses are no longer necessary. In that sense, we have the following result.

\begin{theorem} \label{th222}
Given an arbitrary function $f:(0,\infty)\rightarrow [0,\infty)$, if the operators $J_a^\alpha= I_a^{f(\alpha)}$ satisfy properties~1 and~2, then $f$ is the identity map. Property~3 is not required. 
\end{theorem}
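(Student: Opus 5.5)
Since every operator in the family is a Riemann-Liouville integral, the plan is to turn properties~1 and~2 into a functional equation for $f$ and solve it without continuity. Property~1 says $I_a^{f(1)}=I_a^1$. Property~2 together with the semigroup law $I_a^{\gamma}I_a^{\delta}=I_a^{\gamma+\delta}$ gives
\[
I_a^{f(\alpha)+f(\beta)}=I_a^{f(\alpha)}I_a^{f(\beta)}=I_a^{f(\alpha+\beta)}\qquad\text{for all }\alpha,\beta>0,
\]
with the convention that $I_a^0$ is the identity, since $f$ may take the value $0$.

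The first step is injectivity of $\gamma\mapsto I_a^\gamma$ on $[0,\infty)$. Apply both operators to the constant function $1$, using $I_a^\gamma 1=(t-a)^\gamma/\Gamma(\gamma+1)$. If these agree for all $t\in(a,T)$, the ratio $(t-a)^{\gamma-\delta}$ is constant, which forces $\gamma=\delta$. The same test with norms, $\|I_a^\gamma 1\|=(T-a)^\gamma/\Gamma(\gamma+1)$, is not injective, so I would use the pointwise functions instead. This gives $f(1)=1$ and $f(\alpha+\beta)=f(\alpha)+f(\beta)$ for all $\alpha,\beta>0$.

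The second step replaces continuity by the sign condition $f\ge0$. For $\beta>\alpha$, additivity gives $f(\beta)=f(\alpha)+f(\beta-\alpha)\ge f(\alpha)$, so $f$ is monotone nondecreasing. Additivity also gives $f(q)=q$ for all positive rationals $q$: we have $nf(1/n)=f(1)=1$, and then $f(m/n)=mf(1/n)$. For irrational $\alpha>0$, pick rationals $q_k\uparrow\alpha$ and $r_k\downarrow\alpha$. Monotonicity yields $q_k\le f(\alpha)\le r_k$, hence $f(\alpha)=\alpha$.

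Alternatively, monotonicity makes $f$ bounded on an interval, so it is continuous at a point, and Corollary~\ref{linear} with $n=1$ applies, giving $f(\alpha)=c\alpha$ with $c=f(1)=1$. I expect the main obstacle to be the injectivity step, including the $I_a^0$ convention when $f$ vanishes somewhere. Every other step is elementary.
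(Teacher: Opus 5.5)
Your proposal is correct and follows essentially the same route as the paper: the index law gives additivity of $f$, $f\ge 0$ gives monotonicity, additivity fixes the values on the positive rationals, and a squeeze handles irrationals (your two-sided rational squeeze is a slightly cleaner version of the paper's $n(p-q_n)$ estimate). You also make explicit two points the paper uses tacitly: the injectivity of $\gamma\mapsto I_a^\gamma$, via $I_a^\gamma 1=(t-a)^\gamma/\Gamma(\gamma+1)$, and the convention $I_a^0=\mathrm{id}$.
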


\begin{proof}
The proof of this fact proceeds as follows. By property~2, $J_a^{\alpha+\beta}= I_a^{f(\alpha+\beta)}$ is equal to $J_a^\alpha J_a^\beta= I_a^{f(\alpha)} I_a^{f(\beta)}= I_a^{f(\alpha)+f(\beta)}$, therefore $f(\alpha+\beta)=f(\alpha)+f(\beta)$ for all $\alpha,\beta>0$. Notice that $f$ is an increasing function, because $f(\alpha+\beta)\geq f(\alpha)$ (here the non-negative codomain of $f$ is key, compared to the standard Cauchy functional equation). On the other hand, since $f$ is an additive function, it is easy to check that there is a constant $c\geq 0$ such that $f(p)=cp$ for all positive numbers $p\in\mathbb{Q}$. We need to discuss what happens when $p$ is not a rational number. Pick a sequence of positive rational numbers $q_n$ such that $0<q_n\leq p\leq q_n+1/n$. Then $0\leq n(p-q_n)\leq 1$, and $0\leq f(n(p-q_n))=f(np)-f(nq_n)=nf(p)-ncq_n\leq f(1)$ (we use the fact that $f$ is increasing). Consequently, $0\leq f(p)-cq_n\leq f(1)/n$ and $f(p)=cp$ when $n\rightarrow\infty$. By property~1, $f(1)=1$, so $c=1$ and $f$ is the identity map.
\end{proof}



We observe that, as a consequence of Theorem~\ref{th222}, we recover the nice result of Acz\'el and Erd\"os, who state that there is no ``Hamel basis'' from $\mathbb{R}\cap (0,\infty)$ over $\mathbb{Q}\cap (0,\infty)$. This is written in quotation marks because these sets are not vector spaces, but essentially what we mean is that there is no Hamel basis for the positive numbers formed by positive real numbers with positive rational coefficients. If we had such a basis, then for $J_a^\alpha= I_a^{f(\alpha)}$, we could choose an additive discontinuous function $f:(0,\infty)\rightarrow [0,\infty)$ of the form $f(\alpha)=q_1+\ldots+q_s$ with $q_1,\ldots,q_s\in\mathbb{Q}\cap(0,\infty)$, so property~3 would be necessary to ensure that $f$ is the identity map. However, Theorem~\ref{th222} tells us that this conclusion is not true.

As Cao Labora and colleagues showed in 2018~\cite{dif}, the index rule cannot be true for a fractional derivative. Our approach gives the intuition of why this fact occurs, focusing on the Riemann-Liouville derivative. The Riemann-Liouville derivative can be defined by analytic continuation of the Riemann-Liouville integral, so that we can denote it by $I_a^{-\alpha}$, $\alpha>0$. If the Riemann-Liouville derivative has an index law, then in the proof of Theorem~\ref{th222} we can take $f:\mathbb{R}\rightarrow\mathbb{R}$. However, in such a case, $f$ is not increasing, and we are in the standard situation of the Cauchy functional equation, with an additive function $f$ that can be discontinuous by invoking Zorn's lemma. This $f$ is not the identity map; therefore, there is an alternative operator to the Riemann-Liouville derivative and integral of the form $J_a^\alpha= I_a^{f(\alpha)}$, which is not possible by Theorem~\ref{th222}.

\section{The Cartwright-McMullen theorem in several variables} \label{sec3d}

In this section, we provide a direct proof for the Cartwright-McMullen theorem in several variables. The statement is a natural generalization of the one in \cite{cart}. We highlight that for some hypotheses, the boxes for multiorders are open and, for other ones, they are closed. This is not an error, but a deliberate choice. Similarly to the notation used in the literature, if $\alpha=(\alpha_1,\dots,\alpha_n) \in [0,\infty)^n$, the operator $I^\alpha$ will denote the Riemann-Liouville integral of multiorder $\alpha$ in dimension $n>1$ with origin $(0,\ldots,0)$; that is, we integrate $\alpha_i$ times with respect to coordinate $i \in \{1,\dots,n\}$.

\begin{theorem} \label{CartMc_several}
Let $E$ be the space $\mathrm{L}^p[0,1]^n$ ($1\leq p<\infty$), or $C[0,1]^n$. Then there is precisely one family $\{J^\alpha:\alpha \in [0,\infty)^n\}$ of real-valued operators on $E$ satisfying the following conditions:
\begin{itemize}
\item Interpolation of the usual integral: For all $i\in\{1,\ldots,n\}$, we have $J^{e_i}=I^{e_i}$ on $E$, where $e_i$ is the $i$th canonical vector of $\mathbb{R}^n$;
\item Index law: $J^{\alpha+\beta}=J^\alpha\circ J^\beta$ on $E$ for all $\alpha,\beta\in [0,\infty)^n$;
\item Continuity: The map $\alpha\mapsto J^{\alpha}$ is continuous from $(0,\infty)^n$ to $\mathcal{B}(E)$ for some Hausdorff topology on $\mathcal{B}(E)$ weaker than the norm topology.
\end{itemize}
We find that $J^\alpha=I^\alpha$ is the multidimensional Riemann-Liouville integral on $E$, for all $\alpha\in (0,\infty)^n$.
\end{theorem}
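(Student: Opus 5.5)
The plan is to follow the strategy of Cartwright and McMullen. First I identify $J^\alpha$ for multiorders with rational entries, using only interpolation and the index law. Then I pass to all of $(0,\infty)^n$ by continuity. By the index law, $J^\alpha=J^{\alpha_1e_1}\circ\cdots\circ J^{\alpha_ne_n}$, and the same factorization holds for $I^\alpha$. So it suffices to show $J^{qe_i}=I^{qe_i}$ for every positive rational $q$ and every $i$. Since $J^{(k/m)e_i}=(J^{e_i/m})^k$, this reduces to showing $J^{e_i/m}=I^{e_i/m}$ for every $m\in\mathbb{N}$.

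\textbf{Step 1 (commutant).} Fix $\alpha$. By the index law, $J^\alpha J^{e_i}=J^{\alpha+e_i}=J^{e_i}J^\alpha$, so every $J^\alpha$ commutes with all the coordinate Volterra operators $I^{e_1},\dots,I^{e_n}$. I will prove a multivariate analogue of the one-dimensional description of the commutant of the Volterra operator. Namely, a bounded operator on $E$ commuting with every $I^{e_i}$ acts as $f\mapsto \mu * f$, truncated to $[0,1]^n$, for some kernel or measure $\mu$ supported on $[0,1]^n$. I expect to obtain this by testing the operator on $I^{\mathbf 1}g$ with $\mathbf 1=(1,\dots,1)$ and using density of the range of $I^{\mathbf 1}$. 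In particular, each $J^{e_i/m}$ is given by convolution with some $\mu_{i,m}$. The identity $(J^{e_i/m})^m=I^{e_i}$ then becomes the convolution equation
\[
\mu_{i,m}^{*m}=\ell_i \quad\text{on } [0,1]^n,
\]
where $\ell_i$ is the kernel of $I^{e_i}$: the Heaviside function in $x_i$ tensored with the Dirac mass in the other coordinates. The kernel $\kappa_{i,m}$ of $I^{e_i/m}$ satisfies the same equation.

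\textbf{Step 2 (Titchmarsh).} This is the main obstacle. I need the truncated convolution algebra on $[0,1]^n$ to behave like an integral domain, so that one can pass to its field of fractions in the sense of Mikusiński. For this I will use the multidimensional Titchmarsh theorem of Lions: the convex hull of $\operatorname{supp}(\mu*\nu)$ equals the sum of the convex hulls of the supports. This is applied in the corner cone $[0,\infty)^n$ and localized to the unit box. With it, $\mu^{*m}=\kappa^{*m}$ forces $(\mu-\omega\kappa)$ to vanish for some $m$-th root of unity $\omega$, by factoring $x^m-y^m=\prod_\omega(x-\omega y)$ in the quotient field. The hard part will be handling truncation to the box, which is where the open/closed distinction in the statement should matter.

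\textbf{Step 3 (selecting the root).} Because the operators are real-valued, $\omega$ must be real, so $\omega=\pm1$. For odd $m$, this gives $\omega=1$ immediately. For even $m$, I write $J^{e_i/m}=(J^{e_i/(2m)})^2$. This operator is the square of a real operator of the form $\pm I^{e_i/(2m)}$, which equals $I^{e_i/m}$. So $-I^{e_i/m}$ is excluded, and $J^{e_i/m}=I^{e_i/m}$ for all $m$.

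\textbf{Step 4 (continuity).} The map $\alpha\mapsto I^\alpha$ is norm-continuous on $(0,\infty)^n$, hence continuous for any weaker Hausdorff topology. The map $\alpha\mapsto J^\alpha$ is continuous into the same topology by assumption. The two maps agree on the dense set $(0,\infty)^n\cap\mathbb{Q}^n$. In a Hausdorff space, limits are unique, so $J^\alpha=I^\alpha$ for every $\alpha\in(0,\infty)^n$.
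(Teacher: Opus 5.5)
Your outline (commutation $\Rightarrow$ convolution $\Rightarrow$ roots of unity $\Rightarrow$ real root $\Rightarrow$ continuity) is the paper's. The gap is Step~2, which carries the actual content and does not work as proposed.

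The truncated convolution algebra on $[0,1]^n$ is not an integral domain, so there is no Mikusi\'nski field of fractions in which to factor $x^m-y^m$. Already for $n=1$, $\chi_{[1/2,1]}*\chi_{[1/2,1]}=0$ on $[0,1]$. For $n=2$, even a factor that does not vanish near the origin can be a zero divisor. Take $f(u,v)=u-v$ and $g=\chi_{\{y_1+y_2\ge1\}}$; then $\int_{[0,x]}f(x-y)g(y)\,\mathrm{d}y=0$ for every $x\in[0,1]^2$, because on each segment $\{y_1+y_2=s\}\cap[0,x]$ with $s\ge1$ the integrand is odd about the midpoint. So ``handling truncation'' is the whole difficulty. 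It is also unrelated to the open/closed distinction in the statement: closed orders are what give $J^m=I^m$ (or your product decomposition), and continuity is only used on the open orthant.

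The missing idea, which the paper uses, is to argue with the supports of the specific factors $h_j=\mu-\omega_j\kappa$ rather than divide. Since $h_j-h_k=(\omega_k-\omega_j)\kappa$ does not vanish near $0$, at most one $h_j$ can vanish near the origin. Titchmarsh--Lions applied to $h_1*\cdots*h_m=0$ on the box then shows that exactly one does; call it $\nu=h_{j_0}$.

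For $n\ge2$ you must still go further. Comparing lower support functions in the direction $(1,\dots,1)$ only yields $\nu=0$ on the simplex $\{x_1+\dots+x_n<1\}$. The example above shows that no general support theorem gives more; the paper's one-sentence appeal to Lions needs the same supplement. To finish, write $h_j=\nu+c_j\kappa$ with $c_j\neq0$ and expand $\nu*\prod_{j\ne j_0}(\nu+c_j\kappa)=0$. If $\nu$ vanishes on $\{\sum_i x_i<r\}$, every term containing $\nu^{*2}$ vanishes on $\{\sum_i x_i<2r\}$, so $\nu*\kappa^{*(m-1)}$ vanishes there too. Injectivity of the Riemann--Liouville integral on such lower sets then gives $\nu=0$ on $\{\sum_i x_i<2r\}$, and finitely many doublings cover $[0,1]^n$.

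Two further points.
\begin{itemize}
\item Reducing to the axial orders $e_i/m$ takes you out of the open orthant. For $n\ge2$ the hypotheses do not place $J^{e_i/m}$ in $\mathcal B(E)$. Step~1 needs boundedness, both to pass by density from polynomial kernels to all kernels and to test on $I^{\mathbf 1}g$. The paper stays with $\alpha=m/p$, $m\in(\mathbb Z^+)^n$, where boundedness is assumed.
\item The paper never needs a commutant theorem producing a possibly singular $\mu$; note your target $\kappa_{i,m}$ is itself a measure carried by the $x_i$-axis. Instead it takes $S=I^m\circ J^{m/p}$, which is convolution with the function $G=J^{m/p}g_m\in E$. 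It notes $S^p=(I^{(p+1)m/p})^p$, runs the root-of-unity argument for $S$, and cancels the injective $I^m$. Your ``testing on $I^{\mathbf 1}g$'' is essentially this construction, so keep the function kernel $G$ instead of differentiating it back to $\mu$.
\end{itemize}
Steps~3 and~4 agree with the paper.
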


\begin{proof}
Due to the continuity property in a topology that is Hausdorff (which ensures the uniqueness of the limit) and the density of $(\mathbb{Q}^+)^n$ in $(\mathbb{R}^+)^n$, it is clear that it is enough to show the result for $\alpha \in (\mathbb{Q}^+)^n$. Equivalently, it suffices to show this for any vector of the form $\alpha=m/p$, where $m \in (\mathbb{Z}^+)^n$ and $p \in \mathbb{Z}^+$ (we can assume a common denominator $p$ by rescaling the numerator and denominator if necessary). 

First, we note that we can show that $J^{\alpha}$ commutes with any $n$-dimensional convolution operator that has an integrable kernel. Indeed, by the index-law and interpolation properties, we know that $J^m=I^m$ for any $m \in (\mathbb{Z}^+)^n$ (observe that using $[0,\infty)^n$ for the orders in the index law is key and $(0,\infty)^n$ is not sufficient). Since $J^{\alpha}\circ I^m=J^{\alpha}\circ J^m=J^{m+\alpha}=J^m\circ J^{\alpha}=I^m\circ J^{\alpha}$ and $J^{\alpha}$ is linear, we have that $J^{\alpha}$ commutes with any convolution operator with a (multivariate) polynomial kernel: $J^{\alpha}[\int_0^t p(s)x(t-s)\mathrm{d}s]=\int_0^t p(s)J^\alpha x(t-s)\mathrm{d}s$, where $p$ is any polynomial, $t=(t_1,\ldots,t_n)$, and the integral is over $[0,t]:=[0,t_1]\times\cdots\times [0,t_n]\subseteq [0,1]^n$. By a density argument, $J^\alpha$ commutes with any convolution operator $R_h$ that has a kernel $h\in E$. In other words, $J^\alpha[\int_0^t h(s)x(t-s)\mathrm{d}s]=\int_0^t h(s)J^\alpha x(t-s)\mathrm{d}s$ for $[0,t]=[0,t_1]\times\cdots\times [0,t_n]\subseteq [0,1]^n$.

Now, we rewrite $\alpha=m/p$ and mimic the technique in \cite{cart}, so the main idea is to use the factorization $(J^{m/p}-\eta_1 I^{m/p})\circ \cdots \circ (J^{m/p}-\eta_p I^{m/p})=J^m-I^m=0$, where each $\eta_r$ is a complex $p$-root of unity. 

Initially, we will assume that $J^{m/p}$ is defined via a kernel $K_{m/p}$ for all $m \in (\mathbb{Z}^+)^n$ and $p \in \mathbb{Z}^+$, that is, $J^{m/p}(f)=K_{m/p}*f$. Therefore, we can rewrite the factorization as $R_h=0$, where $h=(K_{m/p}-\eta_1 g_{m/p})*\cdots\ast(K_{m/p}-\eta_p g_{m/p})$, the symbol $\ast$ indicates a convolution, and $g_\alpha$ is the multivariate Riemann-Liouville kernel of order $\alpha$. Of course, two factors $i$ and $j$ cannot vanish simultaneously on $[0,\lambda]^n$ for any $\lambda>0$ since, in that case, we would have $K_{m/p}-\eta_i g_{m/p}=K_{m/p}-\eta_j g_{m/p}$ and $g_{m/p}=0$ in such a box. Therefore, Titchmarsh theorem in several variables \cite{Lions}, which states that $\mathrm{ch}(\mathrm{supp}(h_1\ast h_2))=\mathrm{ch}(\mathrm{supp}(h_1))+\mathrm{ch}(\mathrm{supp}(h_2))$, ensures that a factor has to be identically zero on $[0,1]^n$ because the other factors contain $(0,\ldots,0)$ in their support. Since $K_{m/p}$ is real, we need to have $K_{m/p}\equiv \pm g_{m/p}$ and $J^{m/p}=\pm I^{m/p}$, but the choice of the ``$-$'' sign is impossible by the index law with the orders $m/(2p)$. Hence, $K_{m/p}= g_{m/p}$ and $J^{m/p}=I^{m/p}$ for all $m \in (\mathbb{Z}^+)^n$ and $p \in \mathbb{Z}^+$.

Let $m \in (\mathbb{Z}^+)^n$ and $p \in \mathbb{Z}^+$. If $J^{m/p}$ is not defined by a convolution, we consider $S:=I^{m} \circ J^{m/p}$. We will see that $S$ is defined by a convolution and, after that, the conclusion will follow. We have $Sf=(I^m \circ J^\alpha)(f)=(J^\alpha \circ I^m)(f)=J^{\alpha}(g_m \ast f)=(J^{\alpha} \circ R_f)(g_m)=(R_f \circ J^{\alpha})(g_m)=R_f(J^{\alpha}(g_m))$, but the last expression can be seen as the convolution operator with kernel $G:=J^{\alpha}(g_m)$ acting on $f$, so $S f= R_{G}f$. Now, the conclusion is easily obtained due to commutativity, as it implies $S^p=I^{p m}\circ J^{m}=I^{(p+1)m}=(I^{(p+1)m/p})^p$. Since $S$ is a convolution operator and satisfies the index-law property, we can apply the previous factorization argument to deduce $S=I^{(p+1)m/p}=I^{m} \circ I^{m/p}$ and $I^{m} \circ J^{m/p}=I^{m} \circ I^{m/p}$. As $I^m$ is injective (for example, differentiate $m$ times to the left), we conclude that $J^{m/p}=I^{m/p}$.
\end{proof}

\begin{remark} \normalfont 
The proof is identical if we replace the hyper-rectangle $[0,1]^n$ by an arbitrary product of compact intervals.
\end{remark}

\section{Characterization of the Riemann-Liouville integral in dimension $1$} \label{seci4}

In this section, we propose an alternative characterization for the Riemann-Liouville integral, different from the one presented in \cite{cart} and significantly connected with Section~\ref{sec_neces}. This new approach is based on the Laplace transform \cite[Appendix D.3]{diethelm} and the Cauchy functional equation \cite[Chapter~3]{saaty}. At the end of the section, we recover the characterization in \cite{cart} as a corollary.

Our functions will be real-valued of real variable. First, we use the domain $(0,\infty)$ for the kernel because it is the natural region for the Laplace transform, and later we adapt the proof to a bounded interval $(0,T)$, giving the original Cartwright-McMullen theorem~\cite{cart} as a consequence. We also start with a convolution operator, because it is amenable to computing with Laplace transforms, and later we adapt the proof for the non-convolution case. We do not use the Titchmarsh theorem at any step, compared to the proof by Cartwright and McMullen.

We let the hat notation indicate the Laplace transform on $(0,\infty)$. We denote by $\mathcal{L}$ the set of Laplace transformable functions $f:(0,\infty)\rightarrow\mathbb{R}$, with $\widehat{f}(x):=\int_0^\infty f(s)\mathrm{e}^{-sx}\mathrm{d}s$ well-defined for all $x>0$. Note that $f\in\mathrm{L}^1(0,T)$ for all $T>0$.

For example, if $f\in\mathrm{L}^1(0,2)$ and $f=0$ on $(2,\infty)$, then $f\in\mathcal{L}$. In fact, if $f$ is exponentially bounded on $(2,\infty)$, then $f\in\mathcal{L}$. The kernel of the one-dimensional Riemann-Liouville integral $I^\alpha$ with left origin at $0$, $x^{\alpha-1}$, is in $\mathcal{L}$ for $\alpha\in (0,\infty)$.

\subsection{The case of convolution operators}

\begin{theorem}[Dimension $1$ for convolution operators] \label{th1}
Let $J^\alpha f=K_\alpha\ast f$ be a family of operators defined by convolution, where $K_\alpha,f:(0,\infty)\rightarrow\mathbb{R}$, $\alpha\in (0,\infty)$, and $K_\alpha,f\in\mathcal{L}$. Suppose: 
\begin{itemize}
\item Identity: $K_1=1$ on $(0,\infty)$, i.e., $J^1 f$ is the ordinary integral;
\item Index law: $J^{\alpha+\beta}=J^\alpha\circ J^\beta$ on $\mathcal{L}$ for all $\alpha,\beta\in (0,\infty)$; 
\item Continuity: For all $x\in (0,\infty)$, the map $\alpha\mapsto \widehat{K}_\alpha(x)$ from $(0,\infty)$ to $\mathbb{R}$ is continuous at one point. (This condition is equivalent to: For all $f\in\mathcal{L}$ and $x\in (0,\infty)$, the map $\alpha\mapsto \widehat{J^{\alpha}f}(x)$ from $(0,\infty)$ to $\mathbb{R}$ is continuous at one point.)
\end{itemize}
Then $J^\alpha=I^\alpha$ is the Riemann-Liouville integral on $\mathcal{L}$, for all $\alpha\in (0,\infty)$.
\end{theorem}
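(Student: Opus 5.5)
We will pass to Laplace transforms and reduce everything to a Cauchy functional equation in the order variable $\alpha$, for each fixed $x>0$. Since $K_\alpha,f\in\mathcal{L}$, the convolution theorem gives $\widehat{J^\alpha f}(x)=\widehat{K}_\alpha(x)\widehat{f}(x)$ for $x>0$. The index law $K_{\alpha+\beta}\ast f=K_\alpha\ast K_\beta\ast f$ for all $f\in\mathcal{L}$ then yields $\widehat{K}_{\alpha+\beta}(x)\widehat{f}(x)=\widehat{K}_\alpha(x)\widehat{K}_\beta(x)\widehat{f}(x)$. Choosing a test function with nonvanishing transform, such as $f=\mathbbm{1}_{(0,1)}$ (for which $\widehat f(x)=(1-\mathrm{e}^{-x})/x>0$), we obtain the multiplicative relation $\widehat{K}_{\alpha+\beta}(x)=\widehat{K}_\alpha(x)\widehat{K}_\beta(x)$ for all $\alpha,\beta>0$ and every fixed $x>0$. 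The identity axiom gives $\widehat{K}_1(x)=1/x$.

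The next step is to take logarithms, and for this we need $\widehat{K}_\alpha(x)>0$. Positivity follows from the square trick: $\widehat{K}_\alpha(x)=\widehat{K}_{\alpha/2}(x)^2\geq 0$. If $\widehat{K}_{\alpha_0}(x)=0$ for some $\alpha_0$, then $\widehat{K}_{\alpha_0/2^k}(x)=0$ for all $k$, and hence $\widehat{K}_\beta(x)=0$ for every $\beta>\alpha_0/2^k$, i.e.\ for all $\beta>0$. This contradicts $\widehat{K}_1(x)=1/x\neq0$. Therefore $h_x(\alpha):=\log\widehat{K}_\alpha(x)$ is well defined on $(0,\infty)$ and satisfies the Cauchy equation~\eqref{Cauchy} there. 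The continuity hypothesis makes $h_x$ continuous at one point, so Corollary~\ref{linear} (with $n=1$, $\Omega=(0,\infty)$) gives $h_x(\alpha)=c(x)\alpha$. The value at $\alpha=1$ forces $c(x)=-\log x$, so
\[\widehat{K}_\alpha(x)=x^{-\alpha}=\widehat{\Big(\tfrac{s^{\alpha-1}}{\Gamma(\alpha)}\Big)}(x),\qquad x>0.\]

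Finally, Lerch's uniqueness theorem for the Laplace transform gives $K_\alpha=s^{\alpha-1}/\Gamma(\alpha)$ almost everywhere on $(0,\infty)$: two functions in $\mathcal{L}$ whose transforms agree on $(0,\infty)$ coincide a.e. Consequently $J^\alpha f=K_\alpha\ast f=I^\alpha f$ for every $f\in\mathcal{L}$.

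For the parenthetical equivalence of the two continuity conditions, we use $\widehat{J^\alpha f}(x)=\widehat{K}_\alpha(x)\widehat f(x)$. One direction is immediate by taking $f$ with $\widehat f(x)\neq0$. In the other direction, continuity of $\widehat{K}_\alpha(x)$ at one point makes $\widehat{K}_\alpha(x)=x^{-\alpha}$ by the argument above, so $\alpha\mapsto\widehat{J^\alpha f}(x)$ is continuous everywhere.

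The main obstacle I expect is justifying the transform identities rigorously: that $K_\alpha\ast f\in\mathcal{L}$ and $\widehat{K_\alpha\ast f}=\widehat K_\alpha\widehat f$ hold for each $x>0$. This requires Fubini/Tonelli, which needs absolute convergence, whereas $\mathcal{L}$ is only defined by existence of $\int_0^\infty f(s)\mathrm{e}^{-sx}\,\mathrm{d}s$. To sidestep this, I would restrict to compactly supported test functions, or read $\mathcal{L}$ as requiring absolute convergence for every $x>0$. Positivity and Lerch's theorem are then standard.
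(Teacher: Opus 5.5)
Your proposal is correct and follows the paper's proof essentially step by step. Both pass to Laplace transforms to get $\widehat{K}_{\alpha+\beta}(x)=\widehat{K}_\alpha(x)\widehat{K}_\beta(x)$, obtain positivity from $\widehat{K}_\alpha(x)=(\widehat{K}_{\alpha/2}(x))^2$, take logarithms and apply Corollary~\ref{linear} to reach $\widehat{K}_\alpha(x)=x^{-\alpha}$, and conclude by Laplace uniqueness. Your explicit non-vanishing argument, the concrete test function, and the remark that $\mathcal{L}$ should be read as requiring absolutely convergent transforms fill in details the paper leaves implicit.
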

\begin{proof}
Let $R_\alpha=\widehat{K}_\alpha$. For all $f\in\mathcal{L}$ and all $x>0$, we have $R_{\alpha+\beta}(x)\widehat{f}(x)=(J^{\alpha+\beta}f)^\wedge(x)=(J^\alpha\circ J^\beta f)^\wedge(x)=R_\alpha(x)R_\beta(x)\widehat{f}(x)$. Since $f$ is arbitrary, we obtain $R_{\alpha+\beta}(x)=R_\alpha(x)R_\beta(x)$. From $R_1(x)=1/x\neq0$ and this index law, it is clear that $R_\gamma(x)$ cannot be $0$ for any $\gamma>0$. In fact, $R_\gamma(x)=(R_{\gamma/2}(x))^2>0$. Then, applying logarithms, $\mathrm{ln}R_{\alpha+\beta}(x)=\mathrm{ln}R_\alpha(x)+\mathrm{ln}R_\beta(x)$. In terms of the fractional order, we have a Cauchy functional equation with a continuous function at one point. Corollary \ref{linear} implies that $\mathrm{ln}R_\alpha(x)=d(x)\alpha$, for some function $d$ defined in $(0,\infty)$ and independent of $\alpha$. We obtain $R_\alpha(x)=\exp(d(x)\alpha)$. From $R_1(x)=1/x$, we have $\exp(d(x))=1/x$, so $d(x)=\mathrm{ln}(1/x)$ and $R_\alpha(x)=1/x^\alpha$. In conclusion, $\widehat{J^\alpha f}(x)=\widehat{f}(x)/x^\alpha=\widehat{I^\alpha f}(x)$ on $(0,\infty)$, which implies that $J^\alpha f(x)=I^\alpha f(x)$ almost everywhere on $(0,\infty)$, as desired.
\end{proof}

\subsection{The case of non-convolution operators} \label{subs_no_lin1}

\begin{theorem}[Dimension $1$, general case] \label{th1_cor33}
Let $J^\alpha$ be a family of linear operators defined for functions $f:(0,\infty)\rightarrow\mathbb{R}$ in $\mathcal{L}$ and $\alpha\in (0,\infty)$, such that $J^\alpha:\mathcal{L}\rightarrow \mathcal{L}$ is continuous for $\|\cdot\|_{\mathrm{L}^1(0,T)}$ and any $T>0$ (i.e., for each $T>0$, we have $\|J^\alpha f_1-J^\alpha f_2\|_{\mathrm{L}^1(0,T)}\rightarrow 0$ when $f_1,f_2\in\mathcal{L}$ and $\|f_1-f_2\|_{\mathrm{L}^1(0,T)}\rightarrow0$). Suppose: 
\begin{itemize}
\item Identity: $J^1 f$ is the ordinary integral;
\item Index law: $J^{\alpha+\beta}=J^\alpha\circ J^\beta$ on $\mathcal{L}$ for all $\alpha,\beta\in (0,\infty)$; 
\item Continuity: For all $x\in (0,\infty)$, the map $\alpha\mapsto \widehat{J^{\alpha}1}(x)$ from $(0,\infty)$ to $\mathbb{R}$ is continuous at one point.
\end{itemize}
Then $J^\alpha=I^\alpha$ is the Riemann-Liouville integral on $\mathcal{L}$, for all $\alpha\in (0,\infty)$.
\end{theorem}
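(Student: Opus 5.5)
The plan is to reduce Theorem~\ref{th1_cor33} to the convolution case, Theorem~\ref{th1}, by proving that each $J^\alpha$ is a convolution operator with kernel $K_\alpha:=J^\alpha 1\in\mathcal{L}$. The strategy copies the non-convolution step in the proof of Theorem~\ref{CartMc_several}. It has three steps.

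\textbf{Step 1 (commutation with $I^1$ and with convolutions).} By the index law, $J^\alpha\circ I^1=J^\alpha\circ J^1=J^{\alpha+1}=J^1\circ J^\alpha=I^1\circ J^\alpha$. Iterating gives $J^\alpha I^k=I^kJ^\alpha$ for every $k\in\mathbb{N}$. Since $I^k1=t^k/k!$ and $J^\alpha$ is linear, $J^\alpha$ commutes with every convolution operator $R_p f=p\ast f$ whose kernel $p$ is a polynomial.

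Now fix $T>0$ and $h\in\mathrm{L}^1(0,T)$, and choose polynomials $p_j\to h$ in $\mathrm{L}^1(0,T)$. Young's inequality on $(0,T)$ gives $R_{p_j}f\to R_hf$ in $\mathrm{L}^1(0,T)$. Convolution on $(0,T)$ only sees values on $(0,T)$, so we may work with truncations that stay in $\mathcal{L}$. The hypothesised $\mathrm{L}^1(0,T)$-continuity of $J^\alpha$ then lets us pass to the limit on both sides. This yields $J^\alpha(h\ast f)=h\ast J^\alpha f$ on $(0,T)$ for all $h,f\in\mathcal{L}$, and since $T$ is arbitrary, on $(0,\infty)$.

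\textbf{Step 2 (convolution representation).} Apply Step~1 with the roles arranged as follows. For $f\in\mathcal{L}$,
\[
I^1 J^\alpha f = J^\alpha(1\ast f) = J^\alpha(f\ast 1) = f\ast J^\alpha 1 = (J^\alpha 1)\ast f = I^1\big((J^\alpha 1)'\ast f\big)\ \text{formally}.
\]
Avoiding derivatives, we have $I^1 J^\alpha f = K_\alpha\ast f$ with $K_\alpha=J^\alpha 1$. Here I would pass to Laplace transforms: $\widehat{J^\alpha f}(x)/x=\widehat{K_\alpha}(x)\widehat f(x)$. Setting $R_\alpha(x):=x\widehat{K_\alpha}(x)$ gives $\widehat{J^\alpha f}=R_\alpha\widehat f$. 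This means $J^\alpha$ acts as a Laplace multiplier, which is all the proof of Theorem~\ref{th1} actually uses.

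\textbf{Step 3 (conclusion via Theorem~\ref{th1}).} Now rerun the argument of Theorem~\ref{th1}. The index law gives $R_{\alpha+\beta}=R_\alpha R_\beta$. The identity axiom gives $R_1(x)=x\cdot\widehat{t}(x)=1/x$, so $R_\alpha=R_{\alpha/2}^2>0$. The continuity hypothesis on $\alpha\mapsto\widehat{J^\alpha1}(x)=\widehat{K_\alpha}(x)$ is exactly continuity of $R_\alpha(x)/x$ at one point. Corollary~\ref{linear} then gives $R_\alpha(x)=x^{-\alpha}$, and uniqueness of the Laplace transform yields $J^\alpha f=I^\alpha f$ a.e.

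\textbf{Main obstacle.} The hard part is Step~1: justifying the density argument within $\mathcal{L}$. Elements of $\mathcal{L}$ are only locally integrable, while approximating polynomials are not small at infinity. Continuity of $J^\alpha$ is only assumed in each $\mathrm{L}^1(0,T)$ seminorm. So I must check that $J^\alpha f$ restricted to $(0,T)$ depends only on $f$ restricted to $(0,T)$ (a causality property). This follows from the continuity hypothesis: if $f_1=f_2$ on $(0,T)$, then $\|f_1-f_2\|_{\mathrm{L}^1(0,T)}=0$, and the hypothesis yields $\|J^\alpha f_1-J^\alpha f_2\|_{\mathrm{L}^1(0,T)}=0$, so we may truncate freely. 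A secondary issue is that Step~2 yields the multiplier form for $I^1J^\alpha$ rather than for $J^\alpha$ itself. Dividing by $\widehat{1}(x)=1/x\neq0$ resolves this, without any need to differentiate $K_\alpha$.
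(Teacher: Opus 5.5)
Your proof is correct and follows the paper's argument: commutation of $J^\alpha$ with convolutions by polynomial density (you also make explicit the causality point that the paper only glosses), then $I^1J^\alpha f=(J^\alpha 1)\ast f$, Laplace transforms, and Corollary~\ref{linear}. The only difference is where the factor $I^1$ is removed. You divide by $\widehat{1}(x)=1/x$ on the Laplace side, so $J^\alpha$ itself becomes a multiplier satisfying the plain Cauchy equation on $(0,\infty)$. The paper instead keeps $\Lambda^\alpha=I^1J^\alpha$, solves the shifted equation $\widehat K_{\alpha+\beta+1}=\widehat K_\alpha\widehat K_\beta$ via $Q_A=R_{A-1}$ on $(1,\infty)$, and cancels $I^1$ by injectivity at the end.
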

\begin{proof}
The same density argument used in the first paragraph of the proof of Theorem \ref{CartMc_several} ensures that $J^\alpha$ commutes with any convolution operator that has a kernel in $\mathcal{L}$, for all $\alpha\in (0,\infty)^n$. In this case, the argument would be applied to an arbitrary finite interval $(0,T)$ and trivially extended to $\mathbb{R}^+$.

Now, let $\Lambda^\alpha =I^1\circ J^\alpha$. Observe that, if $f\in\mathcal{L}$, then $\Lambda^\alpha f=J^\alpha\circ I^1 f=J^\alpha[f\ast 1]=f\ast (J^\alpha 1)=(J^\alpha 1)\ast f=K_\alpha\ast f$, where $K_\alpha=J^\alpha 1\in\mathcal{L}$ satisfies the third condition of Theorem~\ref{th1}. The step $J^\alpha[f\ast 1]=f\ast (J^\alpha 1)$ uses the commutativity previously discussed: $J^\alpha[f\ast 1]=J^\alpha[R_f(1)]=(J^\alpha\circ R_f)(1)=(R_f\circ J^\alpha)(1)=R_f[J^\alpha 1]=f\ast (J^\alpha 1)$.

As in the proof of Theorem~\ref{th1}, we have that, for all $x>0$, $\widehat{K}_{\alpha+\beta+1}(x)=\widehat{K}_\alpha(x)\widehat{K}_\beta(x)$ for all $\alpha,\beta>0$, which is a functional equation. Let $R_\alpha=\widehat{K}_\alpha$ and $Q_A=R_{A-1}$, for $\alpha>0$ and $A>1$. Then $Q_{A+B}(x)=R_{A+B-1}(x)=R_{\alpha+1+\beta+1-1}(x)=R_\alpha(x) R_\beta(x)=Q_A(x) Q_B(x)$, where $A=\alpha+1>1$ and $B=\beta+1>1$. It is easy to see that $Q_A(x)>0$. If we consider the function $\log Q_{\cdot}(x)$ for a fixed $x$, we can apply Corollary \ref{linear}, so $Q_A(x)=\exp(d(x)A)$. In other words, we get $R_\alpha(x)=\exp(d(x)(\alpha+1))$, for $\alpha>0$. We have $R_1(x)=\widehat{J^1 1}(x)=\widehat{I^1 1}(x)=1/x^2$, that is, $d(x)\cdot 2=\mathrm{ln}(1/x^2)$ and $d(x)=\mathrm{ln}(1/x)$, obtaining $R_\alpha(x)=1/x^{\alpha+1}$. As a consequence, $\widehat{\Lambda^\alpha f}(x)=\widehat{f}(x)/x^{\alpha+1}=\widehat{I^{1+\alpha} f}(x)$ on $(0,\infty)$, which implies that $\Lambda^\alpha=I^{1+\alpha}$ on $\mathcal{L}$, that is, $I^1 \circ J^\alpha=I^1\circ I^\alpha$. Since $I^1$ is injective (differentiate almost everywhere to the left, for example), we deduce that $J^\alpha=I^\alpha$ on $\mathcal{L}$, as desired.
\end{proof}

\begin{remark} \normalfont
If we do not consider continuity assumptions, then we obtain the Cauchy functional equation $\mathrm{ln}R_{\alpha+\beta}(x)=\mathrm{ln}R_\alpha(x)+\mathrm{ln}R_\beta(x)$, but with a non-continuous function $\alpha\mapsto R_\alpha(x)$. There exist such functions $R_\alpha$, for which the constructions in the literature rely on Hamel bases. Thus, the approach followed in Section~\ref{sec_neces} appears to be necessary.
\end{remark}

\begin{remark} \label{rm_QQ} \normalfont
If we remove the continuity assumption from all the previous statements, then we can ensure $J^\alpha=I^\alpha$ on $\mathcal{L}$ for all rational orders $\alpha\in (0,\infty)$. Indeed, the Cauchy functional equation $h(\alpha+\beta)=h(\alpha)+h(\beta)$ for $\alpha,\beta>0$ implies that $h(\alpha)=h(1)\alpha$ for all rational values $\alpha>0$, even if no conditions are given for $h$. If $h$ is continuous at one point, then the equality holds for every real number $\alpha>0$. With Hamel bases, it can be proved that there exists a discontinuous function $h:(0,\infty)\rightarrow\mathbb{R}$ such that $h(\alpha+\beta)=h(\alpha)+h(\beta)$ for $\alpha,\beta>0$ but it is not of the form $h(\alpha)=h(1)\alpha$ on $(0,\infty)$.
\end{remark}

\begin{remark} \normalfont
The continuity condition of Theorem~\ref{th1_cor33}, which is the most general result, is actually weaker than that of Theorem~\ref{th1}, due to the proof methods. In that sense, observe that Theorem~\ref{th1_cor33} uses the assumption that ``$\alpha\mapsto \widehat{J^{\alpha}1}(x)$ is continuous at one point'', whereas Theorem~\ref{th1} requires ``$\alpha\mapsto \widehat{J^{\alpha}f}(x)$ is continuous at one point for all $f\in\mathcal{L}$''.
\end{remark}

\begin{corollary}[Dimension~1, a Cartwright-McMullen theorem] \label{th_CM}
Fix $T\in (0,\infty)$. Let $J^\alpha$ be a family of continuous linear operators $\mathrm{L}^1(0,T)\rightarrow \mathrm{L}^1(0,T)$, for $\alpha\in (0,\infty)$. Suppose: 
\begin{itemize}
\item Identity: $J^1 f$ is the ordinary integral;
\item Index law: $J^{\alpha+\beta}=J^\alpha\circ J^\beta$ on $\mathrm{L}^1(0,T)$ for all $\alpha,\beta\in (0,\infty)$; 
\item Continuity: The map $\alpha\mapsto J^{\alpha} 1$ from $(0,\infty)$ to $\mathrm{L}^1(0,T)$ is continuous at one point. (This condition is weaker than the global continuity $(0,\infty)\rightarrow\mathcal{B}(\mathrm{L}^1(0,T))$ required by Cartwright and McMullen.)
\end{itemize}
Then $J^\alpha=I^\alpha$ is the Riemann-Liouville integral on $\mathrm{L}^1(0,T)$, for all $\alpha\in (0,\infty)$.
\end{corollary}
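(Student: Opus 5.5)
The plan is to reduce Corollary~\ref{th_CM} to Theorem~\ref{th1_cor33} by transporting everything from $(0,T)$ to $(0,\infty)$, or more simply, to rerun the proof of Theorem~\ref{th1_cor33} on the bounded interval. The only change is that the Laplace transform is replaced by a transform of functions extended by zero beyond $T$.

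\textbf{Step 1: commutation and convolution structure.} By the first paragraph of the proof of Theorem~\ref{CartMc_several}, with $n=1$, $E=\mathrm{L}^1(0,T)$, and the index law together with $J^1=I^1$, each $J^\alpha$ commutes with every convolution operator $R_h f=\int_0^t h(s)f(t-s)\,\mathrm{d}s$ with $h\in\mathrm{L}^1(0,T)$. This uses the continuity of $J^\alpha$ on $\mathrm{L}^1(0,T)$ and the density of polynomials. Then, exactly as in the proof of Theorem~\ref{th1_cor33}, $\Lambda^\alpha:=I^1\circ J^\alpha$ satisfies $\Lambda^\alpha f=K_\alpha\ast f$ on $(0,T)$, where $K_\alpha:=J^\alpha 1\in\mathrm{L}^1(0,T)$. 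The index law then gives $K_{\alpha+\beta+1}=K_\alpha\ast K_\beta$ on $(0,T)$.

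\textbf{Step 2: passing to a transform.} Here the truncation must be handled. Convolution on $(0,T)$ only sees values on $(0,T)$, so extending $K_\alpha$ by zero does not turn the identity into an identity of Laplace transforms on $(0,\infty)$. I would avoid this issue with a finite-interval exponential transform that is multiplicative modulo an error. A cleaner route first establishes $J^\alpha=I^\alpha$ for rational $\alpha$, which needs no continuity (Remark~\ref{rm_QQ}); on $(0,T)$ this can be obtained directly from the factorization or the Titchmarsh argument of Theorem~\ref{CartMc_several}. I would then use continuity only to pass to irrational orders. Fix $\alpha_0$ where $\alpha\mapsto J^\alpha 1$ is continuous. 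For irrational $\alpha$, write $\alpha=\alpha_0+(\alpha-\alpha_0)$, choose rationals $q_k\to\alpha-\alpha_0$, and set $\beta_k=\alpha-q_k\to\alpha_0$. Then
\[
J^\alpha 1 = J^{q_k}J^{\beta_k}1 = I^{q_k}J^{\beta_k}1 \longrightarrow I^{\alpha-\alpha_0}J^{\alpha_0}1 .
\]
The positivity of $q_k$ must be arranged when $\alpha-\alpha_0\le 0$; in that case I would first shift by an integer using the index law. The limit uses the uniform operator-norm continuity of $q\mapsto I^q$ on $\mathrm{L}^1(0,T)$. Hence $J^\alpha 1$ is determined by $J^{\alpha_0}1$, and continuity at every point follows, giving the hypothesis of the rational-to-real argument.

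\textbf{Step 3: identifying $J^\alpha$.} Once $K_\alpha=J^\alpha 1$ is a continuous function of $\alpha$ everywhere and equals $g_{\alpha+1}=I^\alpha 1$ for rational $\alpha$, density gives $J^\alpha 1=I^\alpha 1$ for all $\alpha$. Hence $\Lambda^\alpha=R_{g_{\alpha+1}}=I^{1+\alpha}$, and injectivity of $I^1$ yields $J^\alpha=I^\alpha$.

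The main obstacle is Step 2: bridging the bounded interval with the Laplace-transform machinery of Theorem~\ref{th1_cor33}, or equivalently securing the rational case on $(0,T)$. If I instead wanted to rely on Theorem~\ref{th1_cor33} directly, I would try extending $J^\alpha$ to $\mathcal{L}$ through causality. Since $J^\alpha$ commutes with convolutions, it is Volterra-type, so restrictions to $(0,T')$ for $T'\le T$ are consistent. However, extending to $T'>T$ is not available from the data, so the direct rational-order route above is what I would commit to.
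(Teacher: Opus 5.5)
Your argument is correct, and it takes a genuinely different route from the paper.

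\textbf{The two routes.} The paper never works inside $(0,T)$. It glues $J^\alpha$ (for $t<T$) with $I^\alpha$ (for $t>T$) into operators $G^\alpha$ on $\mathcal{L}$, then invokes the Laplace-transform result, Theorem~\ref{th1_cor33}, and so avoids Titchmarsh's theorem. You instead get $J^q=I^q$ for rational $q$ from the factorization/Titchmarsh part of the proof of Theorem~\ref{CartMc_several}. That part uses no continuity and works verbatim for $n=1$ on $\mathrm{L}^1(0,T)$. You then transport the one-point continuity of $\alpha\mapsto J^\alpha 1$ through $J^\alpha 1=I^{q_k}J^{\beta_k}1$, with $q_k\in\mathbb{Q}$ and $\beta_k\to\alpha_0$, using the operator-norm continuity of $q\mapsto I^q$ on $(0,\infty)$. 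This makes transparent why continuity at a single point suffices.

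\textbf{What each route buys.} Your route pays with Titchmarsh's theorem. In exchange, the obstruction you identified is real, and it is exactly where the paper's gluing is fragile. For $t>T$ the paper asserts $G^\alpha\circ G^\beta f(t)=I^\alpha\circ I^\beta f(t)$. But $G^\alpha(G^\beta f)(t)=I^\alpha(G^\beta f)(t)$ integrates $G^\beta f=J^\beta f$ over $(0,T)$, so that equality presupposes $J^\beta=I^\beta$ on $(0,T)$. Indeed, the Hamel-basis family $2^{q_1+\cdots+q_s}I^\alpha$ of Section~\ref{sec_neces} satisfies the identity and index law on $(0,T)$, yet its glued version violates the index law for $t>T$. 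Your argument needs no extension beyond $T$ and has no such circularity.

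\textbf{Two tidy-ups.}
\begin{itemize}
\item You do not need ``continuity everywhere plus density''. Approximating $\alpha_0$ by rationals $r_k$ gives $J^{\alpha_0}1=\lim_k I^{r_k}1=I^{\alpha_0}1$. Your limit identity then yields $J^\alpha 1=I^{\alpha-\alpha_0}I^{\alpha_0}1=I^\alpha 1$ directly for $\alpha>\alpha_0$. Also, $q_k\in(0,\alpha)$ holds automatically for large $k$, since $0<\alpha-\alpha_0<\alpha$.
\item For $\alpha\le\alpha_0$, the integer shift does not give continuity of $J^\alpha 1$, because the inverse of $I^N$ is unbounded. It does give the identity itself: $I^N J^\alpha 1=J^{\alpha+N}1=I^{\alpha+N}1=I^N I^\alpha 1$, and $I^N$ is injective. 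Then $I^1J^\alpha f=f\ast J^\alpha 1=I^{1+\alpha}f$ finishes as you say.
\end{itemize}
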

\begin{proof}
For $f\in\mathcal{L}$, we define $G^\alpha f(t)$ as $J^\alpha f(t)$ when $t<T$ and as $I^\alpha f(t)$ when $t>T$, for $\alpha\in (0,\infty)$. 

Note that $G^\alpha:\mathcal{L}\rightarrow \mathcal{L}$ is continuous for $\|\cdot\|_{\mathrm{L}^1(0,S)}$ and any $S>0$: If $S>0$ and $f_1,f_2\in\mathcal{L}$, then
\[
\|G^\alpha f_1-G^\alpha f_2\|_{\mathrm{L}^1(0,S)}\leq \|J^\alpha f_1-J^\alpha f_2\|_{\mathrm{L}^1(0,T)}+\|I^\alpha f_1-I^\alpha f_2\|_{\mathrm{L}^1(0,S)}\longrightarrow 0 \]
when $\|f_1-f_2\|_{\mathrm{L}^1(0,S)}\rightarrow0$.

First, $G^1 f(t)$ is $J^1 f(t)$ or $I^1f(t)$, so it coincides with the ordinary integral of $f$ on $(0,t)$.

Second, if $t<T$, then $G^{\alpha+\beta} f(t)=J^{\alpha+\beta} f(t)=J^\alpha\circ J^\beta f(t)=G^\alpha\circ G^\beta f(t)$. If $t>T$, then $G^{\alpha+\beta} f(t)=I^{\alpha+\beta} f(t)=I^\alpha\circ I^\beta f(t)=G^\alpha\circ G^\beta f(t)$.

Third, we check that, for all $x\in (0,\infty)$, the map $\alpha\mapsto \widehat{G^{\alpha}1}(x)$ from $(0,\infty)$ to $\mathbb{R}$ is continuous at one point. We rewrite this Laplace transform:
\begin{align*}
 \widehat{G^{\alpha}1}(x)= {} & \int_0^\infty \mathrm{e}^{-xt} G^\alpha 1(t)\mathrm{d}t \\
= {} & \int_0^T \mathrm{e}^{-xt} J^\alpha 1(t)\mathrm{d}t+\int_T^\infty \mathrm{e}^{-xt} I^\alpha 1(t)\mathrm{d}t\\
= {} & \int_0^T \mathrm{e}^{-xt} J^\alpha 1(t)\mathrm{d}t+\frac{1}{x^{1+\alpha}}-\int_0^T \mathrm{e}^{-xt} I^\alpha 1(t)\mathrm{d}t,
\end{align*}
where we used the Laplace transform of $I^\alpha 1$, see \cite[Theorem~7.1]{diethelm}. Now, if the continuity point is $\alpha_0$ and $\alpha\rightarrow\alpha_0$, then
\begin{align*}
 \left|\int_0^T \mathrm{e}^{-xt} J^\alpha 1(t)\mathrm{d}t-\int_0^T \mathrm{e}^{-xt} J^{\alpha_0} 1(t)\mathrm{d}t\right|\leq {} & \int_0^T \left| J^\alpha 1(t)- J^{\alpha_0} 1(t)\right|\mathrm{d}t\\
= {} & \|J^\alpha 1-J^{\alpha_0}1\|_{\mathrm{L}^1(0,T)}\rightarrow 0.
\end{align*}
The same phenomenon occurs with $I^\alpha$; therefore, $\alpha\mapsto \widehat{G^{\alpha}1}(x)$ is continuous at $\alpha_0$.

By Theorem~\ref{th1_cor33}, $G^\alpha=I^\alpha$ on $\mathcal{L}$, for all $\alpha\in (0,\infty)$. In particular $J^\alpha=I^\alpha$ on $\mathcal{L}|_{(0,T)}=\mathrm{L}^1(0,T)$, for all $\alpha\in (0,\infty)$.
\end{proof}

\begin{remark} \normalfont
Like in Remark~\ref{rm_QQ}, if we remove the continuity condition from Corollary~\ref{th_CM}, then we have $J^\alpha=I^\alpha$ on $\mathrm{L}^1(0,T)$, for all rational values $\alpha\in (0,\infty)$.
\end{remark}

We write $I_a^\alpha$ for the one-dimensional Riemann-Liouville integral with left origin at $a\in\mathbb{R}$, for $\alpha\in (0,\infty)$. In particular, $I^\alpha=I_0^\alpha$.

\begin{corollary}[Dimension~$1$, a Cartwright-McMullen theorem] \label{th_CM22111}
Fix $a,T\in\mathbb{R}$, $a<T$. Let $J_a^\alpha$ be a family of continuous linear operators $\mathrm{L}^1(a,T)\rightarrow \mathrm{L}^1(a,T)$, for $\alpha\in (0,\infty)$. Suppose: 
\begin{itemize}
\item Identity: $J_a^1 f$ is the ordinary integral;
\item Index law: $J_a^{\alpha+\beta}=J_a^\alpha\circ J_a^\beta$ on $\mathrm{L}^1(a,T)$ for all $\alpha,\beta\in (0,\infty)$; 
\item Continuity: The map $\alpha\mapsto J_a^{\alpha} 1$ from $(0,\infty)$ to $\mathrm{L}^1(a,T)$ is continuous at one point. (This condition is weaker than the global continuity $(0,\infty)\rightarrow\mathcal{B}(\mathrm{L}^1(a,T))$ required by Cartwright and McMullen.)
\end{itemize}
Then $J_a^\alpha=I_a^\alpha$ is the Riemann-Liouville integral on $\mathrm{L}^1(a,T)$, for all $\alpha\in (0,\infty)$.
\end{corollary}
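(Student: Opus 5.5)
The plan is to reduce Corollary~\ref{th_CM22111} to Corollary~\ref{th_CM} by a translation. Set $T'=T-a>0$ and define the shift operator $\tau:\mathrm{L}^1(a,T)\to\mathrm{L}^1(0,T')$, $(\tau f)(t)=f(t+a)$, with inverse $(\tau^{-1}g)(t)=g(t-a)$. Both are linear isometries for the $\mathrm{L}^1$ norms. We then consider the conjugated family $\tilde J^\alpha:=\tau\circ J_a^\alpha\circ\tau^{-1}$ on $\mathrm{L}^1(0,T')$, for $\alpha\in(0,\infty)$. Each $\tilde J^\alpha$ is a continuous linear operator, being a composition of continuous linear maps.

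Next we check the three hypotheses of Corollary~\ref{th_CM} for $\tilde J^\alpha$.
\begin{itemize}
\item \emph{Identity.} A change of variables gives $\tau\circ I_a^1\circ\tau^{-1}=I^1$, since $\int_a^{t+a} g(s-a)\,\mathrm{d}s=\int_0^t g(u)\,\mathrm{d}u$. Hence $\tilde J^1=I^1$.
\item \emph{Index law.} It transfers directly: $\tilde J^\alpha\tilde J^\beta=\tau J_a^\alpha\tau^{-1}\tau J_a^\beta\tau^{-1}=\tau J_a^{\alpha+\beta}\tau^{-1}=\tilde J^{\alpha+\beta}$.
\item \emph{Continuity.} Since $\tau^{-1}1=1$, we get $\tilde J^\alpha 1=\tau(J_a^\alpha 1)$. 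Because $\tau$ is an isometry, $\|\tilde J^\alpha1-\tilde J^{\alpha_0}1\|_{\mathrm{L}^1(0,T')}=\|J_a^\alpha1-J_a^{\alpha_0}1\|_{\mathrm{L}^1(a,T)}$. So continuity at the point $\alpha_0$ is inherited.
\end{itemize}

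Corollary~\ref{th_CM} then yields $\tilde J^\alpha=I^\alpha$ on $\mathrm{L}^1(0,T')$ for all $\alpha>0$. It remains to verify the translation identity $\tau\circ I_a^\alpha\circ\tau^{-1}=I^\alpha$. This follows from the substitution $u=s-a$ in
\[
\frac{1}{\Gamma(\alpha)}\int_a^{t+a}(t+a-s)^{\alpha-1}g(s-a)\,\mathrm{d}s=\frac{1}{\Gamma(\alpha)}\int_0^t(t-u)^{\alpha-1}g(u)\,\mathrm{d}u .
\]
Consequently $J_a^\alpha=\tau^{-1}\tilde J^\alpha\tau=\tau^{-1}I^\alpha\tau=I_a^\alpha$, which is the claim.

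There is no substantial obstacle. The only points needing care are the precise meaning of ``ordinary integral'' with origin $a$, namely $I_a^1f(t)=\int_a^t f$, and confirming that the kernel translation identity holds for every real $\alpha>0$. Both are routine changes of variables.
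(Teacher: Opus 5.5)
Your proposal is correct and is essentially the paper's proof: the paper conjugates $J_a^\alpha$ by the shift $\mathcal{S}f(x)=f(x-a)$ (your $\tau^{-1}$) and applies Corollary~\ref{th_CM}. Your explicit checks of the identity, index-law and continuity hypotheses, using that $\tau$ is an isometry fixing the constant function $1$, just fill in details the paper leaves implicit.
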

\begin{proof}
We just consider the shift operator $\mathcal{S}f(x)=f(x-a)$ from $\mathrm{L}^1(0,T-a)$ to $\mathrm{L}^1(a,T)$ and define the conjugation relation $J^{\alpha}=\mathcal{S}^{-1}\circ J_a^\alpha \circ \mathcal{S}$ on $\mathrm{L}^1(0,T-a)$. This operator satisfies the conditions in Corollary~\ref{th_CM}, which gives $J^\alpha=I^\alpha$ on $\mathrm{L}^1(0,T-a)$. Consequently, $J_a^\alpha=\mathcal{S}\circ J^\alpha \circ \mathcal{S}^{-1}=\mathcal{S}\circ I^\alpha \circ \mathcal{S}^{-1}=I_a^\alpha$ on $\mathrm{L}^1(a,T)$.
\end{proof}

\section{Characterization of the Riemann-Liouville integral in dimension $n>1$} \label{seci5}

We extend the ideas of the previous section to the $n$-dimensional setting, $n>1$. The main issue in this case is that the kernels of the ordinary integrals are not proper functions, but delta distributions. That is, in dimension one, the kernel of $\int_0^x f(y)\mathrm{d}y$ is $K\equiv 1$, which is a proper simple function; however, in dimension $n>1$, the kernel of $\int_0^{x_1} f(y_1,x_2,\ldots,x_n)\mathrm{d}y_1$ is $K(y_1,\ldots,y_n)=\delta_0(y_2)\cdots \delta_0(y_n)$, where $\delta_0$ is the Dirac delta function. 

Our functions will be real-valued of real variable. We let the hat notation indicate the Laplace transform on $(0,\infty)^n$. We denote by $\mathcal{L}$ the set of Laplace transformable functions $f:(0,\infty)^n\rightarrow\mathbb{R}$, with $\widehat{f}(x):=\int_{(0,\infty)^n} f(s)\mathrm{e}^{-s\cdot x}\mathrm{d}s$ well-defined for all $x\in (0,\infty)^n$. Note that $f\in\mathrm{L}^1(0,T)$ for all $T\in (0,\infty)^n$. Here we denote the box $(0,T)=(0,T_1)\times\cdots (0,T_n)$. We write $I^\alpha$ for the multidimensional Riemann-Liouville integral with left origin at $(0,\ldots,0)$, for $\alpha\in [0,\infty)^n$.

The results provide an argumentation that differs completely from Section~\ref{sec3d}.

\subsection{The case of convolution operators}

We give two versions of the results because the ``identity hypothesis'' can be viewed with respect to different types of $n$-tuples. 

\begin{theorem}[Dimension $n$ for convolution operators, version~1] \label{thn}
Let $J^\alpha f=K_\alpha\ast f$ be a family of operators defined by convolution, where $K_\alpha,f:(0,\infty)^n\rightarrow\mathbb{R}$, $\alpha\in (0,\infty)^n$, $f\in\mathcal{L}$, and $K_\alpha \in\mathcal{L}$ for all $\alpha\in (0,\infty)^n$. Suppose: 
\begin{itemize}
\item Identity: For all $i\in\{1,\ldots,n\}$, we have $\widehat{K}_{\alpha}(x)\rightarrow 1/x_i$ on $(0,\infty)^n$ when $\alpha_j\rightarrow 0$ for $j\neq i$ and $\alpha_i=1$; 
\item Index law: $J^{\alpha+\beta}=J^\alpha\circ J^\beta$  on $\mathcal{L}$ for all $\alpha,\beta\in (0,\infty)^n$; 
\item Continuity: For all $x\in (0,\infty)^n$, the map $\alpha\mapsto \widehat{K}_\alpha(x)$ from $(0,\infty)^n$ to $\mathbb{R}$ is continuous at one point. (This condition is equivalent to: For all $f\in\mathcal{L}$ and $x\in (0,\infty)^n$, the map $\alpha\mapsto \widehat{J^{\alpha}f}(x)$ from $(0,\infty)^n$ to $\mathbb{R}$ is continuous at one point.)
\end{itemize}
Then $J^\alpha=I^\alpha$ is the multidimensional Riemann-Liouville integral on $\mathcal{L}$, for all $\alpha\in (0,\infty)$. 
\end{theorem}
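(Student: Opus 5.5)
Following the proof of Theorem~\ref{th1}, I set $R_\alpha(x)=\widehat{K}_\alpha(x)$ for $\alpha\in(0,\infty)^n$ and $x\in(0,\infty)^n$. By the convolution theorem for the $n$-dimensional Laplace transform and the index law applied to an arbitrary $f\in\mathcal{L}$ (for instance, one with $\widehat{f}(x)\neq0$, such as the indicator of a box), I get the multiplicative functional equation
\[
R_{\alpha+\beta}(x)=R_\alpha(x)R_\beta(x),\qquad \alpha,\beta\in(0,\infty)^n .
\]
Positivity comes first, because the identity hypothesis is only a limit statement. Writing $R_\alpha=(R_{\alpha/2})^2\geq0$ handles the sign. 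To rule out zeros, suppose $R_\gamma(x)=0$ for some $\gamma$. Then $R_{\gamma+\beta}(x)=0$ for all $\beta$, so $R$ vanishes on the whole orthant $\gamma+(0,\infty)^n$. Also $R_{\gamma/2^k}(x)^{2^k}=0$, so $R_{\gamma/2^k}(x)=0$ for every $k$. Hence $R$ vanishes on $\gamma/2^k+(0,\infty)^n$ for all $k$, and therefore on all of $(0,\infty)^n$. This contradicts the identity hypothesis, under which $R_\alpha(x)\to1/x_i\neq0$ along $\alpha=(\alpha_1,\dots,1,\dots,\alpha_n)$ with $\alpha_j\to0$ for $j\neq i$. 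So $R_\alpha(x)>0$.

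Next, for fixed $x$, I put $h_x(\alpha)=\ln R_\alpha(x)$. This function is additive on $\Omega=(0,\infty)^n$ and continuous at one point, so Corollary~\ref{linear} (with scalar values, which is enough componentwise) gives a linear function: $h_x(\alpha)=\sum_{j=1}^n d_j(x)\alpha_j$ for some coefficients $d_j(x)$ independent of $\alpha$. Thus
\[
R_\alpha(x)=\exp\Big(\sum_j d_j(x)\alpha_j\Big).
\]
Linearity in $\alpha$ gives continuity on the closed orthant, so I can pass to the limit in the identity hypothesis. Letting $\alpha_j\to0$ for $j\neq i$ with $\alpha_i=1$ yields $\exp(d_i(x))=1/x_i$, that is, $d_i(x)=-\ln x_i$. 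Therefore $R_\alpha(x)=\prod_j x_j^{-\alpha_j}$, which is exactly $\widehat{g_\alpha}(x)$ for the multivariate Riemann-Liouville kernel $g_\alpha(t)=\prod_j t_j^{\alpha_j-1}/\Gamma(\alpha_j)$.

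Finally, I conclude $\widehat{J^\alpha f}=\widehat{K_\alpha}\,\widehat f=\widehat{g_\alpha}\,\widehat f=\widehat{I^\alpha f}$ on $(0,\infty)^n$. Injectivity of the multidimensional Laplace transform on $\mathcal{L}$ (uniqueness theorem) then gives $J^\alpha f=I^\alpha f$ almost everywhere, for all $\alpha\in(0,\infty)^n$.

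I expect the main obstacle to be the positivity step. In Theorem~\ref{th1} it followed from the exact value $R_1=1/x$, whereas here the identity is only asymptotic and holds on the boundary of the domain. The halving argument above is what I would rely on to get around this.
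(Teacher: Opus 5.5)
Your proposal is correct and follows the paper's proof: you derive the multiplicative equation for $R_\alpha=\widehat{K}_\alpha$, take logarithms, apply Corollary~\ref{linear}, use the boundary limit to get $d_i(x)=-\ln x_i$, and conclude by Laplace uniqueness. The only difference is that you actually prove $R_\alpha(x)\neq0$ with the halving argument (a zero would propagate to all of $(0,\infty)^n$), whereas the paper simply asserts this ``by the first condition.''
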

\begin{proof}
Let $R_\alpha=\widehat{K}_\alpha$ for $\alpha\in (0,\infty)^n$. For all $f\in\mathcal{L}$ and all $x\in (0,\infty)^n$, we have $R_{\alpha+\beta}(x)\widehat{f}(x)=(J^{\alpha+\beta}f)^\wedge(x)=(J^\alpha\circ J^\beta f)^\wedge(x)=R_\alpha(x)R_\beta(x)\widehat{f}(x)$. Since $f$ is arbitrary, we obtain $R_{\alpha+\beta}(x)=R_\alpha(x)R_\beta(x)$, for all $\alpha,\beta\in (0,\infty)^n$. Note that $R_\alpha(x)=(R_{\alpha/2}(x))^2>0$ (it cannot be $0$ by the first condition). Then $\mathrm{ln}R_{\alpha+\beta}(x)=\mathrm{ln}R_\alpha(x)+\mathrm{ln}R_\beta(x)$. We have a Cauchy functional equation with respect to the orders in $(0,\infty)^n$, with continuity at one point. By Corollary \ref{linear}, we know that $\mathrm{ln}R_\alpha(x)=d(x)\cdot\alpha$, where $d:(0,\infty)^n\rightarrow \mathbb{R}^n$ is independent of $\alpha$ and the ``dot'' is the Euclidean product. We obtain $R_\alpha(x)=\exp(d(x)\cdot \alpha)$, for $\alpha\in (0,\infty)^n$. Using the first assumption, we compute the limit when $\alpha_j\rightarrow 0$ for $j\neq i$ and $\alpha_i=1$: $\exp(d_i(x))=1/x_i$, so $d_i(x)=\mathrm{ln}(1/x_i)$ and $R_\alpha(x)=1/(x_1^{\alpha_1}\cdots x_n^{\alpha_n})$. Thus, $\widehat{J^\alpha f}(x)=\widehat{f}(x)/(x_1^{\alpha_1}\cdots x_n^{\alpha_n})=\widehat{I^\alpha f}(x)$ on $(0,\infty)^n$, which implies that $J^\alpha f(x)=I^\alpha f(x)$ almost everywhere on $(0,\infty)^n$, as desired.
\end{proof}

\begin{theorem}[Dimension $n$ for convolution operators, version~2] \label{thn22}
Let $J^\alpha f=K_\alpha\ast f$ be a family of operators defined by convolution, where $K_\alpha,f:(0,\infty)^n\rightarrow\mathbb{R}$, $\alpha\in (0,\infty)^n$, $f\in\mathcal{L}$, and $K_\alpha \in\mathcal{L}$ for all $\alpha\in (0,\infty)^n$. Suppose: 
\begin{itemize}
\item Identity: $K_\alpha(x)=x_1^{\alpha_1-1}\cdots x_n^{\alpha_n-1}/(\Gamma(\alpha_1)\cdots\Gamma(\alpha_n))$ for all $x\in(0,\infty)^n$, for $n$ tuples $\alpha^{(j)}\in (0,\infty)$, $j=1,\ldots,n$, that are linearly independent. That is, the kernel $K_\alpha$ coincides with the Riemann-Liouville kernel on $(0,\infty)^n$ (i.e., $J^\alpha=I^\alpha$) at $n$ fractional-order vectors that are linearly independent. 
\item Index law: $J^{\alpha+\beta}=J^\alpha\circ J^\beta$ on $\mathcal{L}$ for all $\alpha,\beta\in (0,\infty)^n$; 
\item Continuity: For all $x\in (0,\infty)^n$, the map $\alpha\mapsto \widehat{K}_\alpha(x)$ from $(0,\infty)^n$ to $\mathbb{R}$ is continuous at one point. (This condition is equivalent to: For all $f\in\mathcal{L}$ and $x\in (0,\infty)^n$, the map $\alpha\mapsto \widehat{J^{\alpha}f}(x)$ from $(0,\infty)^n$ to $\mathbb{R}$ is continuous at one point.)
\end{itemize}
Then $J^\alpha=I^\alpha$ is the multidimensional Riemann-Liouville integral on $\mathcal{L}$, for all $\alpha\in (0,\infty)$. 
\end{theorem}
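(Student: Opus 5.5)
The plan is to copy the proof of Theorem~\ref{thn} up to the point where the linear function $d(x)$ is obtained, and then to identify $d(x)$ through the $n$ prescribed orders instead of through the limits along coordinate directions.

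First set $R_\alpha=\widehat{K}_\alpha$ for $\alpha\in(0,\infty)^n$. Taking Laplace transforms in the index law and using that $f\in\mathcal{L}$ is arbitrary gives $R_{\alpha+\beta}(x)=R_\alpha(x)R_\beta(x)$ for all $\alpha,\beta\in(0,\infty)^n$ and $x\in(0,\infty)^n$. Next I need $R_\alpha(x)>0$. We have $R_\alpha(x)=(R_{\alpha/2}(x))^2\ge 0$, so it remains to exclude zeros. At each prescribed order $\alpha^{(j)}$, the identity hypothesis gives
\[
R_{\alpha^{(j)}}(x)=\prod_{i=1}^n x_i^{-\alpha^{(j)}_i}>0 .
\]
Suppose $R_\gamma(x)=0$ for some $\gamma$. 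Then $R_{\gamma+\delta}(x)=0$ for every $\delta\in(0,\infty)^n$. Choose an integer $N$ with $N\alpha^{(1)}-\gamma\in(0,\infty)^n$, which is possible because $\alpha^{(1)}$ has positive entries. Then
\[
0=R_{N\alpha^{(1)}}(x)=R_{\alpha^{(1)}}(x)^N>0,
\]
a contradiction.

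With positivity in hand, $\ln R_\alpha(x)$ solves the Cauchy equation on $(0,\infty)^n$ and is continuous at one point. Corollary~\ref{linear}, applied with $\mathbb{R}$ as codomain, which its proof covers verbatim, gives $\ln R_\alpha(x)=d(x)\cdot\alpha$ for some $d:(0,\infty)^n\to\mathbb{R}^n$ not depending on $\alpha$.

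The key new step is to determine $d(x)$. At the prescribed orders we have
\[
d(x)\cdot\alpha^{(j)}=\ln R_{\alpha^{(j)}}(x)=-\sum_{i=1}^n \alpha^{(j)}_i\ln x_i=v(x)\cdot\alpha^{(j)},\qquad j=1,\ldots,n,
\]
where $v(x)=(\ln(1/x_1),\ldots,\ln(1/x_n))$. Hence $d(x)-v(x)$ is orthogonal to $n$ linearly independent vectors in $\mathbb{R}^n$, so $d(x)=v(x)$. (Equivalently, the matrix with rows $\alpha^{(j)}$ is invertible.) It follows that $R_\alpha(x)=1/(x_1^{\alpha_1}\cdots x_n^{\alpha_n})$ for every $\alpha\in(0,\infty)^n$.

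To conclude, $\widehat{J^\alpha f}=\widehat{I^\alpha f}$ on $(0,\infty)^n$. By injectivity of the multidimensional Laplace transform, $J^\alpha f=I^\alpha f$ almost everywhere.

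The only real obstacle is positivity, since Theorem~\ref{thn} used the limit hypothesis there. The semigroup argument above, which propagates a zero forward along the orders, handles it. Everything else is linear algebra layered on the argument already given.
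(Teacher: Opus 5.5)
Your proof is correct and follows the paper's route. You derive the multiplicative Cauchy equation for $R_\alpha(x)$, take logarithms and apply Corollary~\ref{linear} to get $\ln R_\alpha(x)=d(x)\cdot\alpha$, and then use the invertible linear system from the $n$ independent orders $\alpha^{(j)}$ to force $d(x)=(\ln(1/x_1),\ldots,\ln(1/x_n))$. Your explicit exclusion of zeros via $R_{N\alpha^{(1)}}(x)=R_{\alpha^{(1)}}(x)^N>0$ fills in the positivity step, which the paper covers only by saying the proof is analogous to that of Theorem~\ref{thn}.
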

\begin{proof}
It is analogous to that of Theorem~\ref{thn}. We arrive at $R_\alpha(x)=\exp(d(x)\cdot \alpha)$, for $\alpha\in (0,\infty)^n$. Changing $\alpha$ by $\alpha^{(j)}$, we obtain $\exp(d(x)\cdot \alpha^{(j)})=1/(x_1^{\alpha_1^{(j)}}\cdots x_n^{\alpha_n^{(j)}})$, that is, $d(x)\cdot \alpha^{(j)}=\log(1/(x_1^{\alpha_1^{(j)}}\cdots x_n^{\alpha_n^{(j)}}))$. This is a linear system with a unique solution because of the linear independence. Observe that $d(x)=(\mathrm{ln}(1/x_1),\ldots,\mathrm{ln}(1/x_n))$ is a solution and, hence, the unique solution. Thus, $R_\alpha(x)=1/(x_1^{\alpha_1}\cdots x_n^{\alpha_n})$ for all $\alpha\in (0,\infty)$, as desired.
\end{proof}

\begin{remark} \normalfont
The identity assumption of Theorem~\ref{thn} is essentially that of Theorem~\ref{thn22} when the vectors $\alpha^{(j)}$ form the canonical basis. However, in that case, $K_\alpha$ would be defined through Dirac delta functions; therefore, the assumptions of the theorems are stated differently.
\end{remark}

\subsection{The case of non-convolution operators} \label{subs_no_lin2}

\begin{theorem}[Dimension $n$, general case] \label{thn_g}
Let $J^\alpha$ be a family of operators defined for functions $f:(0,\infty)^n\rightarrow\mathbb{R}$ in $\mathcal{L}$ and $\alpha\in [0,\infty)^n$, such that $J^\alpha:\mathcal{L}\rightarrow \mathcal{L}$ is linear and continuous for $\|\cdot\|_{\mathrm{L}^1(0,T)}$ and any $T\in (0,\infty)^n$. Suppose: 
\begin{itemize}
\item Identity: For all $i\in\{1,\ldots,n\}$, we have $J^{e_i}=I^{e_i}$ on $\mathcal{L}$, where $e_i$ is the $i$th canonical vector of $\mathbb{R}^n$;
\item Index law: $J^{\alpha+\beta}=J^\alpha\circ J^\beta$  on $\mathcal{L}$ for all $\alpha,\beta\in [0,\infty)^n$; 
\item Continuity: For all $x\in (0,\infty)^n$, the map $\alpha\mapsto \widehat{J^{\alpha}1}(x)$ from $(0,\infty)^n$ to $\mathbb{R}$ is continuous at one point.
\end{itemize}
Then $J^\alpha=I^\alpha$ is the multidimensional Riemann-Liouville integral on $\mathcal{L}$, for all $\alpha\in [0,\infty)^n$. 
\end{theorem}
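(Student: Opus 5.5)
The strategy mirrors the proof of Theorem~\ref{th1_cor33}: first reduce to a convolution operator by composing with an ordinary integral, and then solve a multiplicative Cauchy functional equation in the Laplace variable. Let $\mathbf{1}=(1,\ldots,1)$.

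\textbf{Step 1: commutation with convolutions.} The identity and index laws on $[0,\infty)^n$ give $J^m=I^m$ for every $m\in\mathbb{Z}_{\geq0}^n$. Arguing exactly as in the first paragraph of the proof of Theorem~\ref{CartMc_several}, $J^\alpha$ commutes with $I^m$. By linearity it then commutes with every convolution with a polynomial kernel. By density, using the continuity of $J^\alpha$ in $\|\cdot\|_{\mathrm{L}^1(0,T)}$ for each box $(0,T)$, it commutes with every convolution operator $R_h$ with $h\in\mathcal{L}$.

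\textbf{Step 2: reduction to a convolution operator.} Define $\Lambda^\alpha=I^{\mathbf{1}}\circ J^\alpha$. Since $I^{\mathbf{1}}f=f\ast 1$, we get
\[
\Lambda^\alpha f=J^\alpha[R_f(1)]=R_f[J^\alpha 1]=K_\alpha\ast f, \qquad K_\alpha:=J^\alpha 1\in\mathcal{L}.
\]
Put $R_\alpha=\widehat{K}_\alpha$. Using $J^{\alpha+\beta}=J^\alpha J^\beta$ together with the commutation from Step~1,
\[
\Lambda^{\alpha+\beta+\mathbf{1}}=I^{\mathbf{1}}J^{\alpha}J^{\mathbf{1}}J^{\beta}=I^{\mathbf{1}}J^\alpha I^{\mathbf{1}}J^\beta=\Lambda^\alpha\Lambda^\beta.
\]
Taking Laplace transforms and using the arbitrariness of $f$ yields $R_{\alpha+\beta+\mathbf{1}}(x)=R_\alpha(x)R_\beta(x)$ for all $\alpha,\beta\in[0,\infty)^n$.

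\textbf{Step 3: solving the functional equation.} Set $Q_A=R_{A-\mathbf{1}}$ for $A\in[1,\infty)^n$. Then $Q_{A+B}=Q_AQ_B$. For positivity, $Q_{2A}=Q_A^2\geq0$, and $Q$ cannot vanish: if $Q_A(x)=0$, then multiplicativity would force $R_{k\mathbf{1}}(x)=0$ for some large integer $k$. But $R_{k\mathbf{1}}(x)=\prod_i x_i^{-(k+1)}\neq0$, which we can compute directly since $J^{k\mathbf{1}}=I^{k\mathbf{1}}$. To get strict positivity for every $A$, not only for $2A$, I would write $A=A/2+A/2$ where this is allowed, or else use $R_{\alpha}$ with $\alpha\in[0,\infty)^n$ and $Q_{A}=Q_{A'}Q_{A-A'}$ with $A'$ chosen suitably. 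Taking logarithms gives an additive function on $\prod[1,\infty)$. It is continuous at one point, because continuity of $\alpha\mapsto\widehat{J^\alpha1}(x)$ on $(0,\infty)^n$ transfers to $A$ in the open box. By Corollary~\ref{linear} (the closed-box version), $\ln Q_A(x)=d(x)\cdot A$. Evaluating at $\alpha=e_i$, where $R_{e_i}(x)=\widehat{I^{e_i}1}(x)=x_i^{-1}\prod_j x_j^{-1}$, and at $\alpha=0$, where $R_0(x)=\prod_j x_j^{-1}$, I can identify each coordinate: $d_i(x)=\ln(1/x_i)$. Hence $R_\alpha(x)=\prod_j x_j^{-(\alpha_j+1)}$.

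\textbf{Step 4: conclusion.} By uniqueness of the Laplace transform, $I^{\mathbf{1}}J^\alpha=I^{\mathbf{1}}I^\alpha$. Since $I^{\mathbf{1}}$ is injective (differentiate in each variable), $J^\alpha=I^\alpha$ for all $\alpha\in[0,\infty)^n$.

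\textbf{Main obstacle.} I expect Step~3 to be the delicate part: establishing strict positivity of $Q_A(x)$ so that logarithms make sense, and correctly transferring the one-point continuity hypothesis (stated on the open orthant) to the shifted closed domain where Corollary~\ref{linear} is applied. I would first try to derive positivity from $R_{\alpha}=R_{\alpha/2-\mathbf{1}/2}^2$ where admissible. For the remaining boundary orders, I would use the nonvanishing argument with integer multiples of $\mathbf{1}$ described above.
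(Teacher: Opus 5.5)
Your proposal follows the paper's proof essentially step by step: commutation with convolutions, the reduction $\Lambda^\alpha=I^{\mathbf 1}J^\alpha$ with $\Lambda^\alpha f=K_\alpha\ast f$ and $K_\alpha=J^\alpha 1$, the shifted equation $Q_{A+B}=Q_AQ_B$ on $[1,\infty)^n$ solved via Corollary~\ref{linear}, identification of $d$ from $J^{e_i}=I^{e_i}$, and injectivity of $I^{\mathbf 1}$. It is also more explicit than the paper about why $\Lambda^{\alpha+\beta+\mathbf 1}=\Lambda^\alpha\Lambda^\beta$ and about positivity. The positivity loose end you flagged closes in one line. When some coordinate of $A$ is below $2$, the splitting $Q_A=Q_{A'}Q_{A-A'}$ is unavailable and nonvanishing alone does not fix the sign, but $Q_A=Q_{A+B}/Q_B$ for any $B\in[2,\infty)^n$ is a quotient of two squares of nonzero values, hence $Q_A>0$.
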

\begin{proof}
The same density argument used in the first paragraph of the proof of Theorem \ref{CartMc_several} ensures that $J^\alpha$ commutes with any $n$-dimensional convolution operator that has a kernel in $\mathcal{L}$, for all $\alpha\in [0,\infty)^n$. In this case, the argument would be applied to an arbitrary $(0,T)$.

Let $\Lambda^\alpha =I^{\bar{1}} J^\alpha$ (the superscript of $I$ is formed by ones). Observe that, for $\alpha\in [0,\infty)^n$, $\Lambda^\alpha f=J^\alpha I^{\bar{1}} f=J^\alpha[f\ast 1]=f\ast (J^\alpha 1)=(J^\alpha 1)\ast f=K_\alpha\ast f$, where $K_\alpha=J^\alpha 1\in\mathcal{L}$. The step $J^\alpha[f\ast 1]=f\ast (J^\alpha 1)$ uses the commutativity previously discussed.

As in the proof of Theorem~\ref{thn}, we have that, for all $x\in (0,\infty)^n$, $\widehat{K}_{\alpha+\beta+\bar{1}}(x)=\widehat{K}_\alpha(x)\widehat{K}_\beta(x)$ for all $\alpha,\beta\in [0,\infty)^n$, which is a functional equation. Let $R_\alpha=\widehat{K}_\alpha$ and $Q_A=R_{A-\bar{1}}$, for $\alpha\in [0,\infty)^n$ and $A\in [1,\infty)^n$. Then $Q_{A+B}(x)=R_{A+B-\bar{1}}(x)=R_{\alpha+\bar{1}+\beta+\bar{1}-\bar{1}}(x)=R_\alpha(x) R_\beta(x)=Q_A(x) Q_B(x)$, where $A=\alpha+\bar{1}\in [1,\infty)^n$ and $B=\beta+\bar{1}\in [1,\infty)^n$. By continuity at one point in $[1,\infty)^n$ and Corollary \ref{linear}, this fact implies that $Q_A(x)=\exp(d(x)\cdot A)$, that is, $R_\alpha(x)=\exp(d(x)\cdot(\alpha+\bar{1}))$, for $\alpha\in [0,\infty)^n$, where $d:(0,\infty)^n\rightarrow\mathbb{R}^n$ is independent of $\alpha$. We have $R_{e_i}(x)=\widehat{J^{e_i} 1}(x)=\widehat{I^{e_i} 1}(x)=1/x^{e_i+\bar{1}}$, where the multi-index power is understood as a product of the components as usual. That is, $d(x)\cdot (e_i+\bar{1})=\mathrm{ln}(1/x^{e_i+\bar{1}})$. Note that $d(x)=(\mathrm{ln}(1/x_1),\ldots,\mathrm{ln}(1/x_n))$ is a solution and since the linear system has a unique solution, it is the solution. We obtain $R_\alpha(x)=1/x^{\alpha+\bar{1}}$. As a consequence, $\widehat{\Lambda^\alpha f}(x)=\widehat{f}(x)/x^{\alpha+\bar{1}}=\widehat{I^{\bar{1}+\alpha} f}(x)$ on $(0,\infty)^n$, which implies that $\Lambda^\alpha=I^{\bar{1}+\alpha}$ on $\mathcal{L}$, that is, $I^{\bar{1}}\circ J^\alpha=I^{\bar{1}} \circ I^\alpha$. Since $I^{\bar{1}}$ is injective (differentiate almost everywhere to the left, for example), we deduce that $J^\alpha=I^\alpha$ on $\mathcal{L}$, as desired.
\end{proof}

\begin{corollary}[Dimension~$n$, extension of the original Cartwright-McMullen theorem] \label{th_CM22}
Fix values $T_1,\ldots,T_n\in (0,\infty)$ and $(0,T)=(0,T_1)\times\cdots\times (0,T_n)$. Let $J^\alpha$ be a family of operators on $\mathrm{L}^1(0,T)$ for $\alpha\in [0,\infty)^n$, such that $J^\alpha:\mathrm{L}^1(0,T)\rightarrow \mathrm{L}^1(0,T)$ is linear and continuous. Suppose: 
\begin{itemize}
\item Identity: For all $i\in\{1,\ldots,n\}$, we have $J^{e_i}=I^{e_i}$ on $\mathrm{L}^1(0,T)$, where $e_i$ is the $i$th canonical vector of $\mathbb{R}^n$;
\item Index law: $J^{\alpha+\beta}=J^\alpha\circ J^\beta$ on $\mathrm{L}^1(0,T)$ for all $\alpha,\beta\in [0,\infty)^n$; 
\item Continuity: The map $\alpha\mapsto J^{\alpha} 1$ from $(0,\infty)^n$ to $\mathrm{L}^1(0,T)$ is continuous at one point. 
\end{itemize}
Then $J^\alpha=I^\alpha$ is the multidimensional Riemann-Liouville integral on $\mathrm{L}^1(0,T)$, for all $\alpha\in [0,\infty)^n$.
\end{corollary}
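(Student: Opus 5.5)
The plan is to imitate the proof of Corollary~\ref{th_CM}: extend the operators on the box $(0,T)$ to operators on $\mathcal{L}$ (functions on $(0,\infty)^n$), check the hypotheses of Theorem~\ref{thn_g}, and then restrict back to the box.

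For $f\in\mathcal{L}$ and $\alpha\in[0,\infty)^n$, set $G^\alpha f(t)=J^\alpha(f|_{(0,T)})(t)$ if $t\in(0,T)$, and $G^\alpha f(t)=I^\alpha f(t)$ otherwise. The definition is meaningful because every operator involved is causal, in the sense that its value on $[0,t]$ depends only on the input on $[0,t]$. For $I^\alpha$ this is clear. For $J^\alpha$ it is not given a priori, and I would establish it first. As in the first paragraph of the proof of Theorem~\ref{CartMc_several}, $J^\alpha$ commutes with convolution operators whose kernel lies in $\mathrm{L}^1(0,T)$; this uses $J^m=I^m$ for $m\in\mathbb{N}^n$, which follows from the identity and index-law hypotheses with orders in $[0,\infty)^n$, together with density of polynomials. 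Hence $I^{\bar 1}J^\alpha f=f\ast J^\alpha 1$. If $f$ vanishes on $[0,t]$, this convolution vanishes on $[0,t]$. Injectivity of $I^{\bar1}$, understood locally (a function whose iterated integral vanishes on $[0,t]$ is zero a.e.\ on $[0,t]$), then gives $J^\alpha f=0$ on $[0,t]$. This is the main obstacle, and it is exactly where commutativity is essential.

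Next I would verify the hypotheses of Theorem~\ref{thn_g} for $G^\alpha$.

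\textbf{Linearity and continuity.} On each box $(0,S)$ the bound $\|G^\alpha f\|_{\mathrm{L}^1(0,S)}\le \|J^\alpha f\|_{\mathrm{L}^1(0,T)}+\|I^\alpha f\|_{\mathrm{L}^1(0,S)}$ gives continuity. Laplace transformability of $G^\alpha f$ holds because it differs from $I^\alpha f$ only on a bounded set, on which it is integrable.

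\textbf{Identity.} $G^{e_i}=I^{e_i}$ holds piecewise.

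\textbf{Index law.} On $(0,T)$, causality shows that $G^\beta f$ restricted to $(0,T)$ equals $J^\beta f$, so $G^\alpha G^\beta f=J^\alpha J^\beta f=J^{\alpha+\beta}f$ there. Off $(0,T)$, however, $G^\alpha G^\beta f=I^\alpha(G^\beta f)$, and $G^\beta f$ is a mixture of $J^\beta f$ and $I^\beta f$. The index law off the box therefore needs $J^\beta=I^\beta$ already, which is circular.

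To avoid this, I would not run Theorem~\ref{thn_g} on $G^\alpha$ directly. Instead I would rerun its argument on the box itself. The kernel $K_\alpha=J^\alpha 1\in \mathrm{L}^1(0,T)$ satisfies
\[
K_{\alpha+\beta+\bar1}=K_\alpha\ast K_\beta \quad\text{on } (0,T),
\]
and $K_{e_i}=I^{e_i}1$. Extend each $K_\alpha$ by zero to $\tilde K_\alpha\in\mathcal{L}$. The identity above then holds only on $(0,T)$, so the Laplace-transform argument needs care.

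To handle this, I would use a truncated Laplace-type transform, or equivalently compare Laplace transforms modulo functions supported outside $(0,T)$. An alternative is to let $T$ grow: the box version for a given $T$ is consistent with the version for a larger $T'$ only if $J^\alpha$ extends, and that extension is not given. My first attempt would therefore be direct. Using Corollary~\ref{linear} coordinatewise would require multiplicativity, which is lost under truncation. So I would instead apply Theorem~\ref{CartMc_several}: its continuity hypothesis is only needed to pass from rational to real multiorders, and here continuity at one point of $\alpha\mapsto J^\alpha 1$ in $\mathrm{L}^1$, combined with the index law and $J^q=I^q$ for rational $q$, yields continuity everywhere of $\alpha\mapsto J^\alpha 1$, hence $K_\alpha=I^\alpha 1$ and, by injectivity of $I^{\bar1}$, $J^\alpha=I^\alpha$.
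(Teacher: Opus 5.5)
Your final argument works, but it is not the paper's route. The paper proves this corollary by the very extension you considered and then discarded: it sets $G^\alpha f=J^\alpha f$ on $(0,T)$ and $G^\alpha f=I^\alpha f$ off $(0,T)$, and asserts, ``analogous to Corollary~\ref{th_CM}'', that $G^\alpha$ meets the hypotheses of Theorem~\ref{thn_g}.

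Your objection to that route is right. For $t\notin(0,T)$ one has $G^\alpha G^\beta f(t)=I^{\alpha+\beta}f(t)+I^\alpha[\mathbf{1}_{(0,T)}(J^\beta f-I^\beta f)](t)$. So the index law for $G^\alpha$ off the box needs the last term to vanish, which is not available before $J^\beta=I^\beta$ is known. The one-dimensional model proof skips exactly this point: it writes $G^{\alpha+\beta}f(t)=I^\alpha\circ I^\beta f(t)=G^\alpha\circ G^\beta f(t)$ for $t>T$, although $G^\beta f=J^\beta f$ on $(0,T)$. For the same reason $G^\alpha$ does not commute with convolution operators, and that commutation is the first step in the proof of Theorem~\ref{thn_g}. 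So a detour like yours is genuinely needed.

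Your route has three steps:
\begin{enumerate}
\item rational multiorders from the proof of Theorem~\ref{CartMc_several}, which is valid on $(0,T)$ by the remark after it and does not use its continuity hypothesis;
\item one-point continuity of $\alpha\mapsto J^\alpha 1$;
\item $I^{\bar 1}J^\alpha f=f\ast J^\alpha 1$ together with injectivity of $I^{\bar 1}$.
\end{enumerate}
One step needs tightening. Writing $J^{\alpha'}1=J^{\alpha-\alpha_0}J^{\alpha_0+(\alpha'-\alpha)}1$ and using boundedness of $J^{\alpha-\alpha_0}$ propagates continuity from $\alpha_0$ only to $\alpha\in\alpha_0+[0,\infty)^n$, not everywhere. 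That is still enough. On that set, $J^\alpha 1=I^\alpha 1$ by density of rational multiorders and continuity of $\alpha\mapsto I^\alpha 1$. For any other $\alpha\in[0,\infty)^n$, pick $m\in(\mathbb{Z}^+)^n$ with $\alpha+m\in\alpha_0+[0,\infty)^n$. Then $I^mJ^\alpha 1=J^{\alpha+m}1=I^{\alpha+m}1=I^mI^\alpha 1$, and injectivity of $I^m$ gives $J^\alpha 1=I^\alpha 1$.

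As for what each approach buys: the Laplace-transform route, had it worked, would have avoided Titchmarsh's theorem, which is the stated point of Sections~\ref{seci4}--\ref{seci5}. Yours gives an actual proof, in effect Theorem~\ref{CartMc_several} with continuity weakened to one-point continuity of $\alpha\mapsto J^\alpha 1$. The cost is that everything rests on the multivariate Titchmarsh step there, and that step needs care. The theorem of supports by itself only forces the factor vanishing near the origin to vanish on $\{\sum_i x_i/T_i<1\}$. For instance, $u=\mathbf{1}_{\{x_1+x_2\geq 1\}}$ and $v(s)=\mathrm{sgn}(s_1-s_2)$ satisfy $u\ast v=0$ on $[0,1]^2$, although $0\in\mathrm{supp}\,v$ and $u\not\equiv 0$. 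To reach the whole box one must bootstrap, using that the other factors differ from that one by nonzero multiples of the separable kernel $g_{m/p}$.
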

\begin{proof}
The proof is analogous to that of Corollary~\ref{th_CM}, where we define $G^\alpha f(t)$ as $J^\alpha f(t)$ when $t\in (0,T)$ and as $I^\alpha f(t)$ when $t\in\mathbb{R}^n\backslash (0,T)$, for $\alpha\in [0,\infty)$ and $f\in\mathcal{L}$. These operators $G^\alpha$ meet the conditions of Theorem~\ref{thn_g}.
\end{proof}

We write $I_a^\alpha$ for the multidimensional Riemann-Liouville integral with left origin at the point $a=(a_1,\ldots,a_n)\in\mathbb{R}^n$, for $\alpha\in [0,\infty)^n$. In particular, $I^\alpha=I_{(0,\ldots,0)}^\alpha$.

\begin{corollary}[Dimension~$n$, extension of the original Cartwright-McMullen theorem] \label{th_CM2244}
Fix values $a_1,\ldots,a_n,T_1,\ldots,T_n\in \mathbb{R}$, $a_j<T_j$, and $(a,T)=(a_1,T_1)\times\cdots\times (a_n,T_n)$. Let $J_a^\alpha$ be a family of operators on $\mathrm{L}^1(a,T)$ for $\alpha\in [0,\infty)^n$, such that $J_a^\alpha:\mathrm{L}^1(a,T)\rightarrow \mathrm{L}^1(a,T)$ is linear and continuous. Suppose: 
\begin{itemize}
\item Identity: For all $i\in\{1,\ldots,n\}$, we have $J_a^{e_i}=I_a^{e_i}$ on $\mathrm{L}^1(a,T)$, where $e_i$ is the $i$th canonical vector of $\mathbb{R}^n$;
\item Index law: $J_a^{\alpha+\beta}=J_a^\alpha\circ J_a^\beta$ on $\mathrm{L}^1(a,T)$ for all $\alpha,\beta\in [0,\infty)^n$; 
\item Continuity: The map $\alpha\mapsto J_a^{\alpha} 1$ from $(0,\infty)^n$ to $\mathrm{L}^1(a,T)$ is continuous at one point.
\end{itemize}
Then $J_a^\alpha=I_a^\alpha$ is the multidimensional Riemann-Liouville integral on $\mathrm{L}^1(a,T)$, for all $\alpha\in [0,\infty)^n$.
\end{corollary}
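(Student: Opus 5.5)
The plan is to reduce Corollary~\ref{th_CM2244} to Corollary~\ref{th_CM22} by a translation, in the same way that Corollary~\ref{th_CM22111} was derived from Corollary~\ref{th_CM}.

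\textbf{Step 1: the shift and the conjugated family.} Write $T-a=(T_1-a_1,\ldots,T_n-a_n)$, so that $(0,T-a)=(0,T_1-a_1)\times\cdots\times(0,T_n-a_n)$. Define the shift
\[
\mathcal{S}:\mathrm{L}^1(0,T-a)\rightarrow \mathrm{L}^1(a,T),\qquad \mathcal{S}f(x)=f(x-a).
\]
This is a linear isometric bijection, with inverse $\mathcal{S}^{-1}g(y)=g(y+a)$. Set
\[
J^\alpha=\mathcal{S}^{-1}\circ J_a^\alpha\circ \mathcal{S}\quad\text{on } \mathrm{L}^1(0,T-a),\qquad \alpha\in[0,\infty)^n.
\]
Each $J^\alpha$ is linear and continuous, being a composition of continuous linear maps.

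\textbf{Step 2: checking the hypotheses of Corollary~\ref{th_CM22}.}

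The index law transfers by telescoping:
\[
J^\alpha J^\beta=\mathcal{S}^{-1}J_a^\alpha\mathcal{S}\mathcal{S}^{-1}J_a^\beta\mathcal{S}=\mathcal{S}^{-1}J_a^{\alpha+\beta}\mathcal{S}=J^{\alpha+\beta}.
\]

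For the identity condition, I need the conjugation relation $\mathcal{S}^{-1}I_a^{\gamma}\mathcal{S}=I^{\gamma}$ for every $\gamma\in[0,\infty)^n$. It follows from the change of variables $s\mapsto s+a$ in
\[
I_a^\gamma g(x)=\frac{1}{\prod_i\Gamma(\gamma_i)}\int_{a}^{x}\prod_i (x_i-s_i)^{\gamma_i-1}g(s)\,\mathrm{d}s,
\]
which depends only on differences $x-s$; the degenerate coordinates with $\gamma_i=0$ act as the identity and are treated the same way. In particular $J^{e_i}=\mathcal{S}^{-1}I_a^{e_i}\mathcal{S}=I^{e_i}$.

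For continuity, $\mathcal{S}1=1$ because constants are shift invariant. Hence $J^\alpha 1=\mathcal{S}^{-1}(J_a^\alpha 1)$. Since $\mathcal{S}^{-1}$ is an isometry, continuity of $\alpha\mapsto J_a^\alpha 1$ at one point transfers to continuity of $\alpha\mapsto J^\alpha 1$ at the same point.

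\textbf{Step 3: conclusion.} Corollary~\ref{th_CM22} gives $J^\alpha=I^\alpha$ on $\mathrm{L}^1(0,T-a)$. Therefore
\[
J_a^\alpha=\mathcal{S}\circ I^\alpha\circ\mathcal{S}^{-1}=I_a^\alpha
\]
for all $\alpha\in[0,\infty)^n$, again by the conjugation relation.

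The only point needing any care is that conjugation relation, including the boundary orders where some $\gamma_i=0$. It is a routine substitution, so I do not expect a genuine obstacle.
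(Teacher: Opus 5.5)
Your proposal is correct and is essentially the paper's argument: the paper's proof of this corollary just says to proceed as in Corollary~\ref{th_CM22111}, i.e.\ conjugate by the shift $\mathcal{S}f(x)=f(x-a)$ from $\mathrm{L}^1(0,T-a)$ to $\mathrm{L}^1(a,T)$, check the three hypotheses, and apply Corollary~\ref{th_CM22}. Your explicit checks — the conjugation relation $\mathcal{S}^{-1}I_a^\gamma\mathcal{S}=I^\gamma$ including coordinates with $\gamma_i=0$, and $\mathcal{S}1=1$ with $\mathcal{S}^{-1}$ an isometry for the continuity hypothesis — fill in details the paper leaves implicit.
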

\begin{proof}
Proceed as in Corollary~\ref{th_CM22111}.
\end{proof}

\begin{remark}[Beyond $\mathrm{L}^1$] \normalfont
Let $E=\cap_{T>0} \mathrm{L}^p(0,T)$, given $1< p<\infty$, or $E=\cap_{T>0} C[0,T]$. The results and proofs of Sections \ref{seci4} and \ref{seci5} can be trivially generalized to $p$-integrable or continuous functions, assuming $K_{\alpha}\in\mathcal{L}$ but $f\in\mathcal{L}\cap E$. Note that the convolution $K_{\alpha}\ast f$ is in $\mathcal{L}\cap E$. The corollaries that extend the Cartwright-McMullen theorem can be stated on $E_T=\mathrm{L}^p(0,T)$, given $1< p<\infty$, or $E_T=C[0,T]$, as a consequence.
\end{remark}

\section{Characterization of the Riesz potential in dimension $n\geq 1$}

Now we consider the Riesz potential $\mathcal{I}^\alpha$ on $\mathbb{R}^n$, where $n\geq 1$ and $0<\alpha<n$. We use the convolution and the Fourier transform, indicated with a hat notation, on $\mathbb{R}^n$ \cite{samko_fcaa}.

\begin{theorem} \label{th1RR}
Let $J^\alpha f=K_\alpha\ast f$ be a family of operators defined by convolution (in the sense of functions or distributions), where $K_\alpha,f:\mathbb{R}^n\rightarrow\mathbb{R}^n$, $0<\alpha<n$, $f$ is a Schwartz function, $K_\alpha$ is locally integrable, and $K_\alpha$ has a Fourier transform (in the sense of functions or distributions) that is defined for all $x\neq 0$. Suppose: 
\begin{itemize}
\item Identity: For some $0<\alpha_0<n$, $\widehat{K}_{\alpha_0}(x)=1/|x|^{\alpha_0}$ for all $x\neq 0$ (i.e., $J^{\alpha_0}=\mathcal{I}^{\alpha_0}$);
\item Index law: $J^{\alpha+\beta}=J^\alpha\circ J^\beta$ on the Schwartz space, for all $\alpha,\beta\in (0,n)$ such that $\alpha+\beta<n$; 
\item Continuity: For all $x\neq 0$, the map $\alpha\mapsto \widehat{K}_\alpha(x)$ from $(0,n)$ to $\mathbb{R}$ is continuous at one point. (This condition is equivalent to: For every Schwartz function $f$ and $x\neq 0$, the map $\alpha\mapsto \widehat{J^{\alpha}f}(x)$ from $(0,n)$ to $\mathbb{R}$ is continuous at one point.)
\end{itemize}
Then $J^\alpha=\mathcal{I}^\alpha$ is the Riesz potential on the Schwartz space, for all $\alpha\in (0,\infty)$.
\end{theorem}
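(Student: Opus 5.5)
We follow the scheme of Theorem~\ref{th1}, replacing the Laplace transform by the Fourier transform on $\mathbb{R}^n$ and using Lemma~\ref{cfr} instead of Corollary~\ref{linear}, because the orders only range over the bounded interval $(0,n)$.

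\textbf{Step 1: multiplicativity.} Set $R_\alpha=\widehat{K}_\alpha$ for $\alpha\in(0,n)$. For a Schwartz function $f$ and $\alpha,\beta\in(0,n)$ with $\alpha+\beta<n$, the index law together with the convolution theorem gives
\[
R_{\alpha+\beta}(x)\widehat{f}(x)=R_\alpha(x)R_\beta(x)\widehat{f}(x),\qquad x\neq 0 .
\]
For each fixed $x\neq0$ we may choose $f$ (for instance a Gaussian) with $\widehat f(x)\neq 0$. Hence $R_{\alpha+\beta}(x)=R_\alpha(x)R_\beta(x)$ pointwise for $x\neq 0$.

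This step is the main technical obstacle. When $K_\alpha$ and $\widehat K_\alpha$ are only distributions, one must justify that $K_\alpha\ast f$ is tempered and that $(K_\alpha\ast f)^\wedge=\widehat K_\alpha\widehat f$ holds pointwise on $\mathbb{R}^n\setminus\{0\}$. One must also check that $J^\beta f$ is again an admissible argument for $J^\alpha$. I would handle this by restricting to Schwartz functions $f$ with $\widehat f$ compactly supported away from the origin. This class is stable under multiplication by functions that are locally integrable off $0$, and it still separates points $x\neq0$. The identity can then be read in $\mathcal{D}'(\mathbb{R}^n\setminus\{0\})$, and evaluated pointwise thanks to the hypothesis that $\widehat K_\alpha$ is a function defined for all $x\neq0$.

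\textbf{Step 2: positivity.} Every $\alpha\in(0,n)$ satisfies $\alpha=\alpha/2+\alpha/2$ with $\alpha/2+\alpha/2<n$, so $R_\alpha(x)=R_{\alpha/2}(x)^2\ge 0$. Suppose $R_\gamma(x)=0$ for some $\gamma$. Then $R_{\gamma'}(x)=0$ for every $\gamma'>\gamma$ in $(0,n)$, by writing $\gamma'=\gamma+(\gamma'-\gamma)$. Halving repeatedly, we also get $R_{\gamma/2^k}(x)=0$ for all $k$, so $R_\beta(x)=0$ for every $\beta\in(0,n)$. This contradicts $R_{\alpha_0}(x)=|x|^{-\alpha_0}\neq0$. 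Therefore $R_\alpha(x)>0$ for all $\alpha\in(0,n)$.

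\textbf{Step 3: solving the restricted Cauchy equation.} For fixed $x\neq0$, the function $h(\alpha)=\ln R_\alpha(x)$ on $(0,n)$ satisfies the restricted Cauchy equation \eqref{cfd2}. It is continuous at one point, because $\alpha\mapsto R_\alpha(x)$ is, by hypothesis. Lemma~\ref{cfr} then gives $h(\alpha)=d(x)\alpha$, so $R_\alpha(x)=e^{d(x)\alpha}$. The identity hypothesis yields $e^{d(x)\alpha_0}=|x|^{-\alpha_0}$, hence $d(x)=-\ln|x|$ and $R_\alpha(x)=|x|^{-\alpha}$.

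\textbf{Step 4: conclusion.} We now have $\widehat{J^\alpha f}=|x|^{-\alpha}\widehat f=\widehat{\mathcal{I}^\alpha f}$ on $\mathbb{R}^n\setminus\{0\}$. By injectivity of the Fourier transform, $J^\alpha f=\mathcal{I}^\alpha f$ for all $\alpha\in(0,n)$. This step needs two further checks. First, the difference of the two tempered distributions is supported at $0$ on the Fourier side, so it is a polynomial in $f$-space; this polynomial can be excluded using the local integrability and decay of $K_\alpha$, or simply the identity $K_\alpha=\widehat{|x|^{-\alpha}}^{\,\vee}$ in the sense of tempered distributions. Second, the conclusion holds for the range $\alpha\in(0,n)$ where the family is defined.
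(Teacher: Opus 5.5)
Your proof is the paper's: multiplicativity $R_{\alpha+\beta}=R_\alpha R_\beta$ off the origin, positivity, Lemma~\ref{cfr} on $(0,n)$, $d(x)=-\ln|x|$ from the $\alpha_0$ identity, and Fourier injectivity, with your halving argument filling in the positivity the paper calls clear and your restriction to $\alpha\in(0,n)$ correctly reading the statement's $(0,\infty)$. The paper treats $\widehat{J^\alpha f}=\widehat K_\alpha\widehat f$ pointwise on $\mathbb{R}^n\setminus\{0\}$, with $\widehat K_\alpha$ a genuine function, as part of the hypotheses (which also removes your Step~4 polynomial ambiguity), so your distributional patches are unnecessary; as written the Step~1 patch would not work anyway, since multiplying $\widehat f\in C_c^\infty(\mathbb{R}^n\setminus\{0\})$ by a merely locally integrable $\widehat K_\beta$ leaves that class (smoothness is lost), and an identity in $\mathcal{D}'(\mathbb{R}^n\setminus\{0\})$ only gives almost-everywhere equality with a null set depending on $(\alpha,\beta)$, which is too weak for the fixed-$x$ Cauchy argument.
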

\begin{proof}
Consider $R_\alpha=\widehat{K}_\alpha$. For every Schwartz function $f$ and for all $x\neq 0$, we have $R_{\alpha+\beta}(x)\widehat{f}(x)=(J^{\alpha+\beta}f)^\wedge(x)=(J^\alpha\circ J^\beta f)^\wedge(x)=R_\alpha(x)R_\beta(x)\widehat{f}(x)$. Since $f$ is arbitrary, we obtain $R_{\alpha+\beta}(x)=R_\alpha(x)R_\beta(x)$, for all $\alpha,\beta\in (0,n)$ such that $\alpha+\beta<n$. From $R_{\alpha_0}(x)=1/|x|^{\alpha_0}>0$ and this index law, it is clear that $R_\gamma(x)$ is positive for any $\gamma\in (0,n)$. We can use logarithms, $\mathrm{ln}R_{\alpha+\beta}(x)=\mathrm{ln}R_\alpha(x)+\mathrm{ln}R_\beta(x)$, which gives a Cauchy functional equation on a restricted domain $(0,n)$. By Lemma~\ref{cfr}, it implies that $\mathrm{ln}R_\alpha(x)=d(x)\alpha$, for some function $d:\mathbb{R}^n\backslash\{0\}\rightarrow\mathbb{R}$ independent of $\alpha\in (0,n)$. That is, $R_\alpha(x)=\exp(d(x)\alpha)$. From $R_{\alpha_0}(x)=1/|x|^{\alpha_0}>0$, we have $d(x)=\mathrm{ln}(1/|x|)$ and $R_\alpha(x)=1/|x|^\alpha$. In conclusion, $\widehat{J^\alpha f}(x)=\widehat{f}(x)/|x|^\alpha=\widehat{\mathcal{I}^\alpha f}(x)$ for $x\neq 0$, which gives $J^\alpha f(x)=\mathcal{I}^\alpha f(x)$ almost everywhere on $\mathbb{R}^n$, as desired.
\end{proof}

\begin{remark} \normalfont
The ``identity hypothesis'' of Theorem~\ref{th1RR} can be replaced by $\lim_{\alpha\rightarrow n^-}\widehat{K}_{\alpha}(x)=1/|x|^{n}$ for all $x\neq 0$. Indeed, this assumption also gives $d(x)=\mathrm{ln}(1/|x|)$.
\end{remark}

\begin{remark} \normalfont
For the case of non-convolution operators in Theorem~\ref{th1RR}, we cannot adapt the proof of Theorem~\ref{th1_cor33}, because of the restricted orders in the index law. There is no commutation property with convolution operators.
\end{remark}

\begin{remark}[Complex-valued functions] \normalfont
All results of this paper apply when the image of $f$ takes complex values, if $K_\alpha$ is always real-valued or $J^\alpha g$ is real-valued when $g$ is real-valued. For example, in the proof of Theorem~\ref{CartMc_several}, we equally obtain $K_{m/p}\equiv \pm g_{m/p}$. In the context of Theorem~\ref{th1}, we have $\mathrm{Re}(J^\alpha f)=J^\alpha (\mathrm{Re} f)$ and $\mathrm{Im}(J^\alpha f)=J^\alpha (\mathrm{Im} f)$, because $K_\alpha$ is real-valued; therefore, our result holds for $\mathrm{Re}(J^\alpha)$ and $\mathrm{Im}(J^\alpha)$, and consequently for $J^\alpha$. On the other hand, in the setting of Theorem~\ref{th1_cor33}, we have $\Lambda^\alpha f=(J^\alpha 1)\ast f$, where $K_\alpha=J^\alpha 1$ is real-valued because $g=1$ is real-valued.
\end{remark}

\section{Characterization of the Riemann-Liouville integral under transmutation}

Transmutation is the relationship between two operators $X$ and $Y$ through a bijective operator $Z$ such that $X=Z\circ Y\circ Z^{-1}$. In this way, the properties of $X$ and $Y$ can be related. This idea has been of application in fractional calculus. In ordinary calculus, if we have $F(x)=\int f(x)\mathrm{d}x$,
then a change of variable $x=g(y)$ proceeds as follows: $\tilde{F}(y)=\int f(g(y))g'(y)\mathrm{d}y$ and $F(x)=\tilde{F}(g^{-1}(y))$,
where $g$ is a continuously differentiable bijective function. If $If(x)=\int f(x)\mathrm{d}x$, $I_g f(x)=\int f(x)\mathrm{d}g(x)$ is the integral with respect to $g$, and $Q_g f=f\circ g$ is the composition operator, then $I_g\circ Q_g=Q_g\circ I$,
so we have the transmutation relation $I_g=Q_g\circ I\circ Q_g^{-1}$. This approach can be extended, for example, to the Riemann-Liouville integral. The motivation of this section is the characterization of the new Riemann-Liouville integral with respect to a function, but compared with \cite{cao_jie,meu}, the following work relies on the fact that $g$ might not be continuously differentiable or even continuous, because the change-of-variable formula can be written more abstractly in the context of measure theory.

\subsection{Transmutation for integrals with a kernel using pushforward measures}

In $\mathbb{R}^n$, $n\geq 1$, we use a generalized notation for boxes. For example, if $a=(a_1,\ldots,a_n)\in\mathbb{R}^n$ and $T=(T_1,\ldots,T_n)\in\mathbb{R}^n$, then $[a,T]=[a_1,T_1]\times\cdots\times [a_n,T_n]$. Similarly, $\int_a^T=\int_{a_1}^{T_1}\cdots \int_{a_n}^{T_n}$. In this way, we can mimic the standard notation in $\mathbb{R}$.

Let $\Xi$ be the set of functions $\phi:[a,T]\rightarrow\mathbb{R}^n$ such that $\phi(t)=(\phi_1(t_1),\ldots,\phi_n(t_n))$, where each component $\phi_j$ is a strictly increasing function on $[a_j,T_j]$. In particular, this property implies that $\phi$ is measurable with a measurable inverse function. Recall that an increasing function has bounded variation and is almost everywhere differentiable, but is not necessarily continuous on $[a,T]$ (under continuity, it is not necessarily absolutely continuous).

Let $\mu$ be a (non-negative) $\sigma$-finite measure on $[\phi(a),\phi(T)]$. The pushforward measure with respect to $\phi$ \cite[Section~3.6]{boga} is $\mu\circ \phi$ on $[a,T]$, defined by $(\mu\circ \phi)(E)=\mu(\phi(E))$ for measurable sets $E\subseteq [a,T]$ (note that $\phi(E)$ is measurable because the function $\phi^{-1}$ is measurable). Since $\phi$ is injective, the pushforward measure is $\sigma$-finite. (Recall that $\sigma$-finite measures are needed to apply Fubini's theorem.)

We define the general integral with a kernel and with respect to $\phi$.

\begin{definition} \label{def1}
The integral with kernel $K\in\mathrm{L}^1([0,\phi(T)-\phi(a)],\mathrm{d}\mu)$ and left origin $a\in\mathbb{R}^n$ with respect to $\phi\in\Xi$ is
\begin{equation}
 I_{a,\phi} g(t)=\int_a^t K(\phi(t)-\phi(s))g(s)\,\mathrm{d}(\mu\circ \phi)(s), \label{eq1}
\end{equation}
for $g\circ\phi^{-1}\in\mathrm{L}^1([\phi(a),\phi(T)],\mathrm{d}\mu)$ and $t\in [a,T]$. In particular, if $\phi$ is the identity function, then $K\in\mathrm{L}^1([0,T-a],\mathrm{d}\mu)$ and
\begin{equation} 
I_{a}g(t)=\int_{a}^{t} K(t-s)g(s)\,\mathrm{d}\mu(s), 
\label{eq2}
\end{equation}
for $g\in\mathrm{L}^1([a,T],\mathrm{d}\mu)$.
\end{definition}

It is important to express~\eqref{eq1} in terms of transmutation of operators to relate $I_{a,\phi}$ to the case $\phi=\text{identity}$. The key operator is $Q_\phi$.

\begin{definition} \label{def3}
The composition operator with respect to a function $\phi$ is $Q_\phi f=f\circ \phi$.
\end{definition}

The transmutation relation requires tools from measure theory; specifically, the change-of-variable formula in integration.

\begin{lemma} \label{lemMe}
Let $X_1$ and $X_2$ be two sets, where $X_1$ has a measure $\nu$ and $X_2$ has another measure $\eta$. Let $\rho:X_1\rightarrow X_2$ be a measurable function. Then $\nu\circ \rho^{-1}$ is the pushforward measure on $X_2$ and 
\begin{equation} \int_{X_2} g\,\mathrm{d}(\nu\circ \rho^{-1})=\int_{X_1} g\circ \rho\, \mathrm{d}\nu, \label{fam1} \end{equation}
for $g\in \mathrm{L}^1(X_2,\mathrm{d}(\nu\circ\rho^{-1}))$ (equivalently $g\circ\rho\in\mathrm{L}^1(X_1,\mathrm{d}\nu)$). \cite[Theorem~3.6.1]{boga}

If $\rho$ is injective with a measurable inverse function $\rho^{-1}$, then
\begin{equation} \int_{\rho(X_1)} g\,\mathrm{d}\eta=\int_{X_1}g\circ \rho\,\mathrm{d}(\eta\circ \rho). \label{fam2} \end{equation}

In particular, if $n=1$, $X_1=[a,T]$, $\rho$ is strictly increasing so that $\rho(X_1)=[\rho(a),\rho(T)]$, and $\nu=\eta$ are Lebesgue measures, then we obtain a Stieltjes integration
\begin{equation} \int_{\rho(a)}^{\rho(T)} g(t)\mathrm{d}t=\int_a^T g(\rho(t))\rho'(t)\mathrm{d}t \label{fam3} \end{equation}
when $\rho$ is continuously differentiable or is just absolutely continuous \cite[Theorem~3.7.1]{boga}, \cite[Theorem~7.26]{rudin} (for~\eqref{fam3}, it is not sufficient to have a strictly increasing function).
\end{lemma}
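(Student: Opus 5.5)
Formula \eqref{fam1} is the abstract change-of-variables theorem for pushforward measures, and the paper cites it from \cite[Theorem~3.6.1]{boga}. I would still indicate its standard proof. By definition, $(\nu\circ\rho^{-1})(B)=\nu(\rho^{-1}(B))$ for measurable $B\subseteq X_2$. Taking $g=\mathbbm{1}_B$, the left side of \eqref{fam1} is $(\nu\circ\rho^{-1})(B)$. The right side is $\int_{X_1}\mathbbm{1}_{\rho^{-1}(B)}\,\mathrm{d}\nu=\nu(\rho^{-1}(B))$, so the two agree. Linearity then gives \eqref{fam1} for simple functions. Monotone convergence extends it to non-negative measurable $g$; here I approximate $g$ by increasing simple functions $g_k$ and note that $g_k\circ\rho$ increases to $g\circ\rho$. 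Finally, splitting $g=g^+-g^-$ gives it for integrable $g$. Applying the non-negative case to $|g|$ shows that $g\in\mathrm{L}^1(X_2,\mathrm{d}(\nu\circ\rho^{-1}))$ exactly when $g\circ\rho\in\mathrm{L}^1(X_1,\mathrm{d}\nu)$.

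For \eqref{fam2}, I would reduce to \eqref{fam1}. Take $\nu:=\eta\circ\rho$ on $X_1$, defined by $(\eta\circ\rho)(E)=\eta(\rho(E))$. This is a measure because $\rho(E)=(\rho^{-1})^{-1}(E)$ is measurable, as $\rho^{-1}$ is measurable, and because injectivity preserves disjointness of images. Next I compute the pushforward of $\nu$ under $\rho$. For measurable $B\subseteq X_2$, injectivity gives $\rho(\rho^{-1}(B))=B\cap\rho(X_1)$. Hence
\[
\nu(\rho^{-1}(B))=\eta\bigl(B\cap\rho(X_1)\bigr).
\]
So $\nu\circ\rho^{-1}$ is $\eta$ restricted to $\rho(X_1)$, which is measurable since it is the preimage of $X_1$ under $\rho^{-1}$. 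Substituting this into \eqref{fam1} yields \eqref{fam2}, with $g$ integrable over $\rho(X_1)$.

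For \eqref{fam3}, I would not try to derive it from \eqref{fam2} alone, because the measure $\lambda\circ\rho$ need not equal $\rho'\,\mathrm{d}t$; Cantor-type functions are counterexamples. Instead I would invoke the cited result for absolutely continuous increasing $\rho$, namely \cite[Theorem~7.26]{rudin}. The sketch goes as follows. For $E=[c,d]\subseteq[a,T]$,
\[
(\lambda\circ\rho)(E)=\rho(d)-\rho(c)=\int_c^d\rho'(t)\,\mathrm{d}t
\]
by the fundamental theorem of calculus for absolutely continuous functions. Intervals form a $\pi$-system generating the Borel sets, so the uniqueness of measures gives $\mathrm{d}(\lambda\circ\rho)=\rho'\,\mathrm{d}t$ on Borel sets. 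Absolute continuity then extends this to Lebesgue sets, since $\rho$ maps null sets to null sets (Lusin's (N) property). Plugging this into \eqref{fam2}, with $\rho(X_1)=[\rho(a),\rho(T)]$ for continuous strictly increasing $\rho$, gives \eqref{fam3}.

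The main obstacle is this last identification of $\lambda\circ\rho$ with $\rho'\,\mathrm{d}t$, and in particular the measurability issues with Lebesgue rather than Borel sets. Since the lemma itself defers this point to the literature, I would rely on \cite[Theorem~3.7.1]{boga} for it.
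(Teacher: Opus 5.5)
Your proposal is correct and follows the same route as the paper: \eqref{fam1} is the cited pushforward change-of-variables theorem, \eqref{fam2} is obtained by taking $\nu=\eta\circ\rho$ in \eqref{fam1} exactly as in the remark after the lemma, and \eqref{fam3} is deferred to the standard absolutely continuous change-of-variables theorem. You add details the paper omits: the check that $\eta\circ\rho$ is a measure whose pushforward is $\eta$ restricted to $\rho(X_1)$, and a sketch of the identification $\mathrm{d}(\lambda\circ\rho)=\rho'\,\mathrm{d}t$, which the paper only calls intuitive.
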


\begin{remark} \normalfont
Some comments on Lemma~\ref{lemMe} are the following. On the one hand, expression~\eqref{fam2} follows from~\eqref{fam1} by taking $\nu=\eta\circ \rho$. To ensure that $\nu$ is a measure, the inverse function $\rho^{-1}$ must be measurable. On the other hand, identity~\eqref{fam3} is the standard change-of-variable theorem in real analysis, and the classical theory of transmutation in fractional calculus is based on it. Thus, Lemma~\ref{lemMe} shows that a measure-theoretic approach allows the extension of the classical theory of transmutation to abstract measures and measurable functions. Observe that it is very intuitive that $\mathrm{d}(\eta\circ \rho)(t)=\rho'(t)\mathrm{d}t$ when $\phi$ is ``smooth'', because $\mathrm{d}(\eta\circ \rho)(t)$ indicates a small change in the measure induced by $\rho$, namely $\Delta\rho(t)$, which is $\rho'(t)\Delta t$ in the limit according to the mean-value theorem.
\end{remark}

\begin{lemma} \label{qp}
For $\phi\in\Xi$, the operator of Definition~\ref{def3} is linear and continuous from the space $\mathrm{L}^1([\phi(a),\phi(T)],\mathrm{d}\mu)$ to $\mathrm{L}^1([a,T],\mathrm{d}(\mu\circ\phi))$, with norm equal to $1$. It is bijective with an inverse operator $Q_{\phi^{-1}}$.
\end{lemma}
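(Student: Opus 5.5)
Linearity is immediate: $Q_\phi(\lambda f+g)=(\lambda f+g)\circ\phi=\lambda Q_\phi f+Q_\phi g$. Before that, I will check that $Q_\phi$ respects a.e.\ equivalence classes, so that it is well defined on $\mathrm{L}^1$. If $f_1=f_2$ outside a $\mu$-null set $N\subseteq[\phi(a),\phi(T)]$, then $f_1\circ\phi=f_2\circ\phi$ outside $\phi^{-1}(N)$. Since $\phi$ is injective, $\phi(\phi^{-1}(N))\subseteq N$, so
\[
(\mu\circ\phi)(\phi^{-1}(N))=\mu(\phi(\phi^{-1}(N)))\leq\mu(N)=0 .
\]
Measurability of $f\circ\phi$ follows because $\phi$ is measurable; componentwise strict monotonicity makes each $\phi_j$ Borel, and hence $\phi$ is Borel.

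The core step is the norm identity. I will apply \eqref{fam2} of Lemma~\ref{lemMe} with $X_1=[a,T]$, $\rho=\phi$, $\eta=\mu$ and the integrand $|f|$, which gives
\[
\|Q_\phi f\|_{\mathrm{L}^1([a,T],\mathrm{d}(\mu\circ\phi))}=\int_{[a,T]}|f|\circ\phi\,\mathrm{d}(\mu\circ\phi)=\int_{\phi([a,T])}|f|\,\mathrm{d}\mu .
\]
This requires $\phi$ to be injective with a measurable inverse, which the paper already notes for $\phi\in\Xi$. The main subtlety is that $\phi([a,T])$ can be a proper subset of $[\phi(a),\phi(T)]$ when $\phi$ is discontinuous, because jumps leave gaps. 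I would handle this by viewing $\mu$ as living on $\phi([a,T])$, that is, by assuming (or noting that the paper implicitly assumes) that $\mu$ gives no mass to the gaps. Equivalently, I would interpret $\mathrm{L}^1([\phi(a),\phi(T)],\mathrm{d}\mu)$ through the restriction of $\mu$ to the image, since only values of $f$ on $\phi([a,T])$ ever enter the definition of $Q_\phi$. Under that convention the right-hand side equals $\|f\|_{\mathrm{L}^1(\mathrm{d}\mu)}$. So $Q_\phi$ is an isometry: it is bounded, it is injective, and its operator norm is exactly $1$ (the space is nontrivial whenever $\mu\neq 0$).

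For bijectivity, I will define $Q_{\phi^{-1}}g=g\circ\phi^{-1}$ on $\phi([a,T])$. Running the same computation backwards with \eqref{fam2} shows $\|Q_{\phi^{-1}}g\|_{\mathrm{L}^1(\mathrm{d}\mu)}=\|g\|_{\mathrm{L}^1(\mathrm{d}(\mu\circ\phi))}$, so $Q_{\phi^{-1}}$ maps $\mathrm{L}^1([a,T],\mathrm{d}(\mu\circ\phi))$ into $\mathrm{L}^1([\phi(a),\phi(T)],\mathrm{d}\mu)$. The identities $Q_\phi Q_{\phi^{-1}}g=g\circ\phi^{-1}\circ\phi=g$ and $Q_{\phi^{-1}}Q_\phi f=f$ then hold pointwise, so $Q_\phi$ is bijective with inverse $Q_{\phi^{-1}}$.

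The only real obstacle is the image/gap issue in the norm step; everything else is a direct application of Lemma~\ref{lemMe}.
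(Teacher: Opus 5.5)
Your proof is correct and follows the paper's argument: linearity by inspection, the isometry $\|Q_\phi f\|=\|f\|$ from the change-of-variables formula \eqref{fam2}, and bijectivity by composing with $Q_{\phi^{-1}}$. It is also more careful than the paper's proof, which integrates $|f|$ over all of $[\phi(a),\phi(T)]$ and so tacitly assumes $\phi([a,T])=[\phi(a),\phi(T)]$ (stated outright in the proof of Theorem~\ref{th_prin}); this fails when some $\phi_j$ jumps, and your extra convention that $\mu$ charges no gap (or else continuity of $\phi$) is genuinely needed, because for Lebesgue $\mu$ the indicator of a gap is a nonzero element of $\mathrm{L}^1([\phi(a),\phi(T)],\mathrm{d}\mu)$ that $Q_\phi$ sends to $0$.
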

\begin{proof}
Linearity is clear by the definition of addition of functions and multiplication by a scalar value. Let $f \in \mathrm{L}^1([\phi(a),\phi(T)],\mathrm{d}\mu)$. Then $\|Q_\phi f\|_{\mathrm{L}^1([a,T],\mathrm{d}(\mu\circ\phi))}=  \int_a^t |f\circ \phi|\mathrm{d}(\mu\circ\phi) 
=  \int_{\phi(a)}^{\phi(t)} |f|\mathrm{d}\mu=\|f\|_{\mathrm{L}^1([\phi(a),\phi(T)],\mathrm{d}\mu)}$,
as a consequence of Lemma~\ref{lemMe} with~\eqref{fam2}. This fact shows continuity with norm equal to $1$. Finally, $Q_{\phi}\circ Q_{\phi^{-1}}f=(f\circ\phi^{-1})\circ\phi=f$
and
$Q_{\phi^{-1}}\circ Q_\phi f=(f\circ\phi)\circ \phi^{-1}=f$.
\end{proof}

\begin{theorem} \label{th_prin}
For $\phi\in\Xi$, Definition~\ref{def1} is well-posed and the transmutation relation $I_{a,\phi}=Q_\phi\circ I_{\phi(a)}\circ Q_{\phi^{-1}}$ holds. The linear operator $I_{a,\phi}:\mathrm{L}^1([a,T],\mathrm{d}(\mu\circ\phi))\rightarrow \mathrm{L}^1([a,T],\mathrm{d}(\mu\circ\phi))$ is continuous.
\end{theorem}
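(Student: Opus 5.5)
The proof has three parts: verify the transmutation identity pointwise with the change-of-variable formula \eqref{fam2} of Lemma~\ref{lemMe}, obtain well-posedness from that identity, and deduce continuity by writing $I_{a,\phi}$ as a composition of bounded operators (Lemma~\ref{qp}).

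\textbf{Step 1 (identity).} Fix $g$ with $g\circ\phi^{-1}\in\mathrm{L}^1([\phi(a),\phi(T)],\mathrm{d}\mu)$ and $t\in[a,T]$. Since $Q_{\phi^{-1}}g=g\circ\phi^{-1}$, definition \eqref{eq2} with origin $\phi(a)$ gives
\[
(Q_\phi\circ I_{\phi(a)}\circ Q_{\phi^{-1}})g(t)=\int_{\phi(a)}^{\phi(t)}K(\phi(t)-u)\,g(\phi^{-1}(u))\,\mathrm{d}\mu(u).
\]
Apply \eqref{fam2} with $X_1=[a,t]$, $\rho=\phi$ and $\eta=\mu$. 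This uses $\phi([a,t])\subseteq[\phi(a),\phi(t)]$. The image need not be all of the box when $\phi$ has jumps, so we take the integral on the right-hand side of \eqref{eq1} as defined through the image set. More precisely, we regard $\mu$ as living on $\phi([a,T])$, or assume $\mu$ gives zero mass to the gaps; this is implicit in the setup. Under that convention the substitution $u=\phi(s)$ turns the integrand into $K(\phi(t)-\phi(s))g(s)$ against $\mathrm{d}(\mu\circ\phi)(s)$. That is exactly \eqref{eq1}, so the identity $I_{a,\phi}=Q_\phi\circ I_{\phi(a)}\circ Q_{\phi^{-1}}$ holds pointwise.

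\textbf{Step 2 (well-posedness).} We need $s\mapsto K(\phi(t)-\phi(s))g(s)$ to be $(\mu\circ\phi)$-integrable for a.e.\ $t$, with the result in $\mathrm{L}^1([a,T],\mathrm{d}(\mu\circ\phi))$. By \eqref{fam1}/\eqref{fam2} this reduces to the same question for $I_{\phi(a)}$ acting on $h=g\circ\phi^{-1}\in\mathrm{L}^1([\phi(a),\phi(T)],\mathrm{d}\mu)$. For that operator we use Tonelli, which applies because all measures are $\sigma$-finite:
\[
\int\!\!\int_{\phi(a)\le u\le v\le\phi(T)}|K(v-u)||h(u)|\,\mathrm{d}\mu(u)\,\mathrm{d}\mu(v)
=\int |h(u)|\int_u^{\phi(T)}|K(v-u)|\,\mathrm{d}\mu(v)\,\mathrm{d}\mu(u).
\]
The inner integral must be bounded by a constant $C$ comparable to $\|K\|_{\mathrm{L}^1([0,\phi(T)-\phi(a)],\mathrm{d}\mu)}$. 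This is the main obstacle. It needs translation invariance of $\mu$, that is, $\mu$ a multiple of Lebesgue measure, or at least some quasi-invariance of $\mu$ under shifts. For a general $\mu$ the map $v\mapsto v-u$ does not preserve $\mu$. I would add this as a working interpretation: the natural reading is that $\mathrm{d}\mu$ in \eqref{eq2} is Lebesgue measure on the transformed variable, which is the case relevant to the Riemann--Liouville setting. If that is unavailable, I would instead assume $\sup_u\int_u^{\phi(T)}|K(v-u)|\,\mathrm{d}\mu(v)<\infty$ and carry the same computation through. Granted this bound, Tonelli shows that $I_{\phi(a)}h(v)$ is finite for $\mu$-a.e.\ $v$ and that $\|I_{\phi(a)}h\|_{\mathrm{L}^1(\mathrm{d}\mu)}\le C\|h\|_{\mathrm{L}^1(\mathrm{d}\mu)}$. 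Linearity is immediate.

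\textbf{Step 3 (continuity).} By Lemma~\ref{qp}, $Q_{\phi^{-1}}$ maps $\mathrm{L}^1([a,T],\mathrm{d}(\mu\circ\phi))$ isometrically onto $\mathrm{L}^1([\phi(a),\phi(T)],\mathrm{d}\mu)$. Step 2 shows that $I_{\phi(a)}$ is bounded there with norm at most $C$, and $Q_\phi$ maps back isometrically. The composition therefore gives $\|I_{a,\phi}\|\le C$, which proves continuity.
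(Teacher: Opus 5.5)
Your argument is essentially the paper's proof. The paper also bounds $I_{\phi(a)}$ on $\mathrm{L}^1(\mathrm{d}\mu)$ via Fubini, obtains $I_{a,\phi}=Q_\phi\circ I_{\phi(a)}\circ Q_{\phi^{-1}}$ from \eqref{fam2}, and gets continuity with norm at most $\|K\|_{\mathrm{L}^1(\mathrm{d}\mu)}$ by composing with the isometries of Lemma~\ref{qp}.

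The two caveats you raise are genuine gaps that the paper passes over without comment, so your extra hypotheses are needed rather than a weakness of your proof:
\begin{itemize}
\item The paper's step $\int_s^T|K(t-s)|\,\mathrm{d}\mu(t)=\int_0^{T-s}|K(\tau)|\,\mathrm{d}\mu(\tau)$ presumes that $\mu$ is translation invariant. Without this the claim fails. For example, take $\mu$ equal to Lebesgue measure plus $\delta_{1/2}$ on $[0,1]$, $\phi$ the identity, $K(\tau)=\tau^{-1/2}$, and $g(s)=(1/2-s)^{-1/2}$ for $s<1/2$, $g=0$ otherwise. Then $I_0 g$ is infinite at the atom $1/2$, so $I_0 g\notin\mathrm{L}^1(\mathrm{d}\mu)$.
\item The paper's assertion $\phi([a,t])=[\phi(a),\phi(t)]$ is false when $\phi$ has jumps.
\end{itemize}
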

\begin{proof}
Expression~\eqref{eq2}, in the context of $\phi=\text{identity}$, is similar to the convolution of two functions in $\mathrm{L}^1$, so it exists \cite[Proposition~1.25]{tese}:
\begin{equation}
\begin{split}
\| I_{a}g\|_{\mathrm{L}^1([a,T],\mathrm{d}\mu)}= {} & \int_a^T |I_a g(t)|\mathrm{d}\mu(t) \\
\leq {} & \int_a^T \int_{a}^{t} |K(t-s)||g(s)|\,\mathrm{d}\mu(s)\mathrm{d}\mu(t) \\
= {} & \int_a^T |g(s)|\left(\int_s^T |K(t-s)|\mathrm{d}\mu(t)\right)\mathrm{d}\mu(s) \\
= {} & \int_a^T |g(s)|\left(\int_0^{T-s} |K(\tau)|\mathrm{d}\mu(\tau)\right)\mathrm{d}\mu(s) \\
\leq {} & \|g\|_{\mathrm{L}^1([a,T],\mathrm{d}\mu)}\|K\|_{\mathrm{L}^1([0,T-a],\mathrm{d}\mu)}.
\end{split}
\label{moci}
\end{equation}
We used Fubini's theorem. In particular, $I_a$ is continuous with norm less than or equal to $\|K\|_{\mathrm{L}^1([0,T-a],\mathrm{d}\mu)}$.

For expression~\eqref{eq1} with $\phi\in\Xi$, if $K\in\mathrm{L}^1([0,\phi(T)-\phi(a)],\mathrm{d}\mu)$ and $g\circ\phi^{-1}\in\mathrm{L}^1([\phi(a),\phi(T)],\mathrm{d}\mu)$, then $I_{\phi(a)} \circ Q_{\phi^{-1}}g$ makes sense and belongs to $\mathrm{L}^1([\phi(a),\phi(T)],\mathrm{d}\mu)$, according to the previous paragraph and~\eqref{moci} (with $g\circ\phi^{-1}$ and the interval $[\phi(a),\phi(T)]$). In addition, $Q_\phi \circ I_{\phi(a)}\circ Q_{\phi^{-1}}g(t)=  \int_{\phi(a)}^{\phi(t)}R(\phi(t)-s)(g\circ\phi^{-1})(s)\,\mathrm{d}\mu(s) 
= I_{a,\phi}g(t)$. Observe that the increasing property of $\phi$ guaranties $\phi([a,t])=[\phi(a),\phi(t)]$, and we used Lemma~\ref{lemMe} with~\eqref{fam2}. This shows that Definition~\ref{def1} is well-posed and the transmutation relation holds.

Note that $g\circ\phi^{-1}\in\mathrm{L}^1([\phi(a),\phi(T)],\mathrm{d}\mu)$ if and only if $g\in Q_\phi(\mathrm{L}^1([\phi(a),\phi(T)],\mathrm{d}\mu))$, which coincides with $\mathrm{L}^1([a,T],\mathrm{d}(\mu\circ\phi))$,
according to Lemma~\ref{qp}. Thus, $I_{a,\phi}$ is well-defined on $\mathrm{L}^1([a,T],\mathrm{d}(\mu\circ\phi))$. By Lemma~\ref{qp}, the composition
$ Q_\phi \circ I_{\phi(a)} \circ Q_{\phi^{-1}}:  \mathrm{L}^1([a,T],\mathrm{d}(\mu\circ\phi))\rightarrow \mathrm{L}^1([\phi(a),\phi(T)],\mathrm{d}\mu)
 \rightarrow \mathrm{L}^1([\phi(a),\phi(T)],\mathrm{d}\mu) \rightarrow \mathrm{L}^1([a,T],\mathrm{d}(\mu\circ\phi)) 
$
is continuous, which gives the desired continuity. The norm of $I_{a,\phi}$ is less than or equal to $\|K\|_{\mathrm{L}^1([0,\phi(T)-\phi(a)],\mathrm{d}\mu)}$.
\end{proof}

\subsection{Application: Transmutation in fractional calculus using pushforward measures}

When $\mu$ is the Lebesgue measure, the theory applies to the Riemann-Liouville integral with kernel $\frac{1}{\Gamma(\alpha)}\tau^{\alpha-1}$, where $\alpha\in (0,\infty)^n$ is the fractional order. We use multi-index notation: $\Gamma(\alpha)=\Gamma(\alpha_1)\cdots \Gamma(\alpha_n)$ and $\tau^{\alpha-1}=\tau_1^{\alpha_1-1}\cdots \tau_n^{\alpha_n-1}$.

\begin{definition}
The Riemann-Liouville integral with respect to $\phi\in\Xi$ is
\begin{equation} I_{a,\phi}^\alpha f(t)=\frac{1}{\Gamma(\alpha)}\int_a^t (\phi(t)-\phi(s))^{\alpha-1}f(s)\,\mathrm{d}(\mu\circ \phi)(s). \label{hol} \end{equation}
\end{definition}

When $\phi$ is the identity function, we recover the standard Riemann-Liouville integral operator $I_{a}^\alpha$. When $a=(0,\ldots,0)$, we use the notation $I_\phi^\alpha$ and $I^\alpha$.

We recall that, in the literature, the theory of transmutation has been applied when $\phi$ is a strictly increasing continuously differentiable function in $\mathbb{R}$, giving the particular case $\mathrm{d}(\mu\circ \phi)(s)=\phi'(s)\mathrm{d}s$.

The previous theory gives the function space domain and the transmutation relation for~\eqref{hol}.

\begin{corollary} \label{cor_indi}
The Riemann-Liouville integral with respect to $\phi\in\Xi$ is a continuous linear operator $I_{a,\phi}^\alpha:\mathrm{L}^1([a,T],\mathrm{d}(\mu\circ\phi))\rightarrow \mathrm{L}^1([a,T],\mathrm{d}(\mu\circ\phi))$.
It is connected to the standard Riemann-Liouville integral by transmutation:
$I_{a,\phi}^\alpha=Q_\phi \circ I_{\phi(a)}^\alpha\circ Q_{\phi^{-1}}$.
\end{corollary}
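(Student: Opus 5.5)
This is a direct specialization of Theorem~\ref{th_prin}. Take $\mu$ to be the Lebesgue measure on $[\phi(a),\phi(T)]$ and the kernel
\[
K_\alpha(\tau)=\frac{1}{\Gamma(\alpha)}\tau^{\alpha-1}=\prod_{j=1}^n\frac{\tau_j^{\alpha_j-1}}{\Gamma(\alpha_j)}.
\]
Substituting into \eqref{eq1}, $I_{a,\phi}$ becomes exactly the operator \eqref{hol}. So the plan has two steps:
\begin{enumerate}
\item check that $K_\alpha$ is an admissible kernel, i.e. $K_\alpha\in\mathrm{L}^1([0,\phi(T)-\phi(a)],\mathrm{d}\mu)$;
\item read off the conclusions of Theorem~\ref{th_prin}.
\end{enumerate}

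\textbf{Integrability of the kernel.} Since $\phi\in\Xi$ has strictly increasing components, each $\phi_j(T_j)-\phi_j(a_j)$ is a finite positive number. The kernel is a product of one-variable functions, so Tonelli's theorem factors its integral:
\[
\int_0^{\phi(T)-\phi(a)}|K_\alpha(\tau)|\,\mathrm{d}\tau=\prod_{j=1}^n\frac{(\phi_j(T_j)-\phi_j(a_j))^{\alpha_j}}{\Gamma(\alpha_j+1)}<\infty .
\]
Each factor is finite because $\alpha_j>0$. This gives $K_\alpha\in\mathrm{L}^1$ on the shifted box.

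\textbf{Conclusions.} Theorem~\ref{th_prin} then yields three things:
\begin{itemize}
\item $I_{a,\phi}^\alpha$ is well defined on $\mathrm{L}^1([a,T],\mathrm{d}(\mu\circ\phi))$;
\item it is continuous and linear there, with norm at most $\|K_\alpha\|_{\mathrm{L}^1}$;
\item the transmutation relation $I_{a,\phi}^\alpha=Q_\phi\circ I_{\phi(a)}\circ Q_{\phi^{-1}}$ holds.
\end{itemize}
With $\mu$ Lebesgue and this kernel, $I_{\phi(a)}$ in \eqref{eq2} is by definition the standard Riemann-Liouville integral $I^\alpha_{\phi(a)}$ with left origin $\phi(a)$. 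This gives the stated identity $I_{a,\phi}^\alpha=Q_\phi \circ I_{\phi(a)}^\alpha\circ Q_{\phi^{-1}}$. Linearity is immediate from the integral form, or from the linearity of $Q_\phi$ (Lemma~\ref{qp}) and of $I_{\phi(a)}^\alpha$.

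\textbf{Main obstacle.} The only delicate point is the interpretation of the integration region. For a non-continuous $\phi$, the image $\phi([a,T])$ need not be all of $[\phi(a),\phi(T)]$. The change of variables in Lemma~\ref{lemMe} via \eqref{fam2} only relates integrals over $\phi([a,t])$. I would handle this by reading the integrals over $[\phi(a),\phi(t)]$ exactly as Theorem~\ref{th_prin} does, i.e. against $\mu$ restricted to the image of $\phi$. The proof of that theorem already absorbs this convention, so nothing new is needed beyond the integrability check above.
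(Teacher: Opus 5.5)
Your proposal is correct and follows the paper's route: the paper proves this corollary by a one-line appeal to Theorem~\ref{th_prin}, and your check that $K_\alpha(\tau)=\tau^{\alpha-1}/\Gamma(\alpha)$ lies in $\mathrm{L}^1([0,\phi(T)-\phi(a)])$ makes explicit the hypothesis that appeal needs. On your side remark about discontinuous $\phi$, restricting $\mu$ to $\phi([a,T])$ clashes with your identification of $I_{\phi(a)}$ as the standard Riemann-Liouville integral; the consistent convention is to keep $\mu$ Lebesgue and extend $g\circ\phi^{-1}$ by zero off $\phi([a,T])$, which works because $[\phi(a),\phi(t)]\cap\phi([a,T])=\phi([a,t])$ for strictly increasing $\phi$, and this subtlety comes from Theorem~\ref{th_prin} itself rather than being a gap in your reduction.
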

\begin{proof}
It is a consequence of Theorem~\ref{th_prin}.
\end{proof}

\subsection{Characterizations}

We use the ideas in \cite{cao_jie,meu}. The key point is that we have the same transmutation relations through the composition of operators, so the typical proofs on fractional calculus with respect to functions work in the setting of strictly increasing functions. Now the Lebesgue spaces are given with respect to pushforward measures instead.

We illustrate one of the possible results. The approach is always similar.

\begin{theorem} \label{CartMc_several222}
Fix values $a_1,\ldots,a_n,T_1,\ldots,T_n\in \mathbb{R}$, $a_j<T_j$, and $(a,T)=(a_1,T_1)\times\cdots\times (a_n,T_n)$, $n\geq 1$. Let $\phi\in\Xi$ and $J_{a,\phi}^\alpha$ be a family of operators on $\mathrm{L}^1((a,T),\mathrm{d}(\mu\circ\phi))$ for $\alpha\in [0,\infty)^n$, such that $J_{a,\phi}^\alpha:\mathrm{L}^1((a,T),\mathrm{d}(\mu\circ\phi))\rightarrow \mathrm{L}^1((a,T),\mathrm{d}(\mu\circ\phi))$ is linear and continuous. Suppose: 
\begin{itemize}
\item Identity: For all $i\in\{1,\ldots,n\}$, we have $J_{a,\phi}^{e_i}=I_{a,\phi}^{e_i}$ on $\mathrm{L}^1((a,T),\mathrm{d}(\mu\circ\phi))$, where $e_i$ is the $i$th canonical vector of $\mathbb{R}^n$;
\item Index law: $J_{a,\phi}^{\alpha+\beta}=J_{a,\phi}^\alpha\circ J_{a,\phi}^\beta$ on $\mathrm{L}^1((a,T),\mathrm{d}(\mu\circ\phi))$ for all $\alpha,\beta\in [0,\infty)^n$; 
\item Continuity: The map $\alpha\mapsto J_{a,\phi}^{\alpha} 1$ from $(0,\infty)^n$ to $\mathrm{L}^1((a,T),\mathrm{d}(\mu\circ\phi))$ is continuous at one point.
\end{itemize}
Then $J_{a,\phi}^\alpha=I_{a,\phi}^\alpha$ is the new Riemann-Liouville integral~\eqref{hol} on $\mathrm{L}^1((a,T),\mathrm{d}(\mu\circ\phi))$, for all $\alpha\in [0,\infty)^n$.
\end{theorem}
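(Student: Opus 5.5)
The plan is to reduce Theorem~\ref{CartMc_several222} to Corollary~\ref{th_CM2244} (or Corollary~\ref{th_CM22111} when $n=1$) by transmutation. Define the conjugated family
\[
\tilde J^\alpha:=Q_{\phi^{-1}}\circ J_{a,\phi}^\alpha\circ Q_\phi
\]
acting on $\mathrm{L}^1((\phi(a),\phi(T)),\mathrm{d}\mu)$. By Lemma~\ref{qp}, $Q_\phi$ is an isometric isomorphism from $\mathrm{L}^1([\phi(a),\phi(T)],\mathrm{d}\mu)$ onto $\mathrm{L}^1([a,T],\mathrm{d}(\mu\circ\phi))$, with inverse $Q_{\phi^{-1}}$. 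Hence each $\tilde J^\alpha$ is linear and continuous.

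Next I would check the three hypotheses of Corollary~\ref{th_CM2244} for $\tilde J^\alpha$, with origin $\phi(a)$ and box $(\phi(a),\phi(T))$.

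\begin{itemize}
\item \emph{Identity.} By Corollary~\ref{cor_indi}, $I_{a,\phi}^{e_i}=Q_\phi\circ I_{\phi(a)}^{e_i}\circ Q_{\phi^{-1}}$. Therefore $\tilde J^{e_i}=Q_{\phi^{-1}}I_{a,\phi}^{e_i}Q_\phi=I_{\phi(a)}^{e_i}$.
\item \emph{Index law.} Since $Q_\phi Q_{\phi^{-1}}=\mathrm{id}$, we get $\tilde J^{\alpha}\tilde J^{\beta}=Q_{\phi^{-1}}J^\alpha_{a,\phi}J^\beta_{a,\phi}Q_\phi=\tilde J^{\alpha+\beta}$.
\item \emph{Continuity.} Since $Q_\phi 1=1$, we have $\tilde J^\alpha 1=Q_{\phi^{-1}}(J^\alpha_{a,\phi}1)$. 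As $Q_{\phi^{-1}}$ is an isometry, the map $\alpha\mapsto\tilde J^\alpha 1$ is continuous at the same point where $\alpha\mapsto J^\alpha_{a,\phi}1$ is.
\end{itemize}

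Corollary~\ref{th_CM2244} would then give $\tilde J^\alpha=I^\alpha_{\phi(a)}$. Conjugating back yields $J^\alpha_{a,\phi}=Q_\phi I^\alpha_{\phi(a)}Q_{\phi^{-1}}=I^\alpha_{a,\phi}$, again by Corollary~\ref{cor_indi}.

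The main obstacle is the measure. Corollary~\ref{th_CM2244} is stated for Lebesgue $\mathrm{L}^1$ on a box, whereas here $\mu$ is a general $\sigma$-finite measure on $[\phi(a),\phi(T)]$. The Riemann-Liouville kernel $\tau^{\alpha-1}$ and the operators $I^\alpha_{\phi(a)}$ are taken with respect to $\mu$ itself, as Definition~\ref{def1} shows. So the argument needs $\mu$ to be Lebesgue measure, which is the setting announced just before formula~\eqref{hol}. I would state this assumption explicitly: with $\mu$ Lebesgue, the space $\mathrm{L}^1((\phi(a),\phi(T)),\mathrm{d}\mu)$ is exactly the space in Corollary~\ref{th_CM2244}.

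A secondary point is that $\phi(T)_j$ and $\phi(a)_j$ may be jump-affected endpoint values, since $\phi$ need not be continuous. Strict monotonicity still gives $\phi([a,t])\subseteq[\phi(a),\phi(t)]$, and the transmutation identity of Theorem~\ref{th_prin} has already been established in this setting. One must only ensure that the target box is nondegenerate, which holds because $\phi$ is strictly increasing. The rest of the argument consists of formal operator identities.
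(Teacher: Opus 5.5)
Your reduction is the same as the paper's. You conjugate by $Q_\phi$, transfer identity, index law and continuity (using $Q_\phi 1=1$), apply the Lebesgue-measure Cartwright--McMullen corollary, and conjugate back. Two things you do are improvements: you invoke Corollary~\ref{th_CM2244}, whereas the paper cites Corollary~\ref{th_CM}, which only covers $n=1$ with origin $0$; and you state explicitly that $\mu$ must be Lebesgue measure.

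The gap is in your ``secondary point''. Nondegeneracy of the target box is irrelevant. What the conjugation needs is that $Q_\phi$ be invertible, i.e.\ that $\phi([a,T])$ have full $\mu$-measure in $[\phi(a),\phi(T)]$. If some $\phi_j$ jumps, the image misses intervals of positive Lebesgue measure, and $Q_\phi$ annihilates everything supported there. Moreover, $Q_{\phi^{-1}}g$ is only defined on $\phi([a,T])$. If you extend it by zero, then $Q_{\phi^{-1}}Q_\phi=P$, multiplication by $\chi_{\phi([a,T])}$. Your Identity bullet then actually gives $\tilde J^{e_i}=P\,I^{e_i}_{\phi(a)}P\neq I^{e_i}_{\phi(a)}$, so Corollary~\ref{th_CM2244} cannot be applied.

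This cannot be fixed by a different argument, because the statement itself fails for discontinuous $\phi$. Take $n=1$, $[a,T]=[0,2]$, $\mu$ Lebesgue, and $\phi(t)=t$ on $[0,1)$, $\phi(t)=t+1$ on $[1,2]$. Then $\mu\circ\phi$ is Lebesgue measure on $[0,2]$ and $I^1_{0,\phi}=I^1$. So $J^\alpha=I^\alpha$ satisfies all three hypotheses. But for $t\in[1,2]$,
\[
I^2_{0,\phi}1(t)=\int_0^1(t+1-s)\,\mathrm{d}s+\int_1^t(t-s)\,\mathrm{d}s=\frac{t^2}{2}+1\neq\frac{t^2}{2}=I^2 1(t).
\]
In fact $I^\alpha_{0,\phi}$ does not even satisfy the index law here, since $I^1_{0,\phi}I^1_{0,\phi}=I^2\neq I^2_{0,\phi}$.

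The paper's proof has the same hole. Lemma~\ref{qp} and the proof of Theorem~\ref{th_prin} assert $\phi([a,t])=[\phi(a),\phi(t)]$, which, as you correctly noticed, is only an inclusion. The fix is to add the hypothesis that each $\phi_j$ is continuous. Such $\phi_j$ may still be singular, e.g.\ the Cantor function plus the identity. Then $\phi$ is a homeomorphism of boxes and your argument goes through verbatim.
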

\begin{proof}
Define $J_{\phi(a)}^\alpha=Q_{\phi}^{-1}\circ J_{a,\phi}^\alpha \circ Q_\phi$, which is a continuous linear operator in $\mathcal{B}(\mathrm{L}^1((\phi(a),\phi(T)),\mathrm{d}\mu))$ by Lemma~\ref{qp}. This family of operators satisfies the following properties:
\begin{itemize}
\item Identity: For all $i\in\{1,\ldots,n\}$, we have $J_{\phi(a)}^{e_i}=I_{\phi(a)}^{e_i}$ on $\mathrm{L}^1((\phi(a),\phi(T)),\mathrm{d}\mu)$. Indeed, $I_{\phi(a)}^\alpha=Q_{\phi}^{-1}\circ I_{a,\phi}^\alpha \circ Q_\phi$, by Corollary~\ref{cor_indi}.
\item Index law: $J_{\phi(a)}^{\alpha+\beta}=J_{\phi(a)}^\alpha\circ J_{\phi(a)}^\beta$ on $\mathrm{L}^1((\phi(a),\phi(T)),\mathrm{d}\mu)$ for all $\alpha,\beta\in [0,\infty)^n$.
\item Continuity: The map $\alpha\mapsto J_{\phi(a)}^{\alpha} 1$ from $(0,\infty)^n$ to $\mathrm{L}^1((\phi(a),\phi(T)),\mathrm{d}\mu)$ is continuous at one point. Indeed, $J_{\phi(a)}^{\alpha} 1=Q_{\phi}^{-1}\circ J_{a,\phi}^\alpha 1$, which satisfies the continuity requirement \cite[Lemma~1.5]{cao_jie}.
\end{itemize}
By Corollary~\ref{th_CM}, $J_{\phi(a)}^\alpha=I_{\phi(a)}^\alpha$ and $J_{a,\phi}^\alpha=I_{a,\phi}^\alpha$, as desired.
\end{proof}

\begin{remark} \normalfont
Related to transmutation is the fractional calculus with respect to a weight~\cite{meu}. In that case, we have a measure $\tilde{\mu}$ such that $\mathrm{d}\tilde{\mu}(t)=w(t)\mathrm{d}t$, where $\mathrm{d}t$ is the Lebesgue measure. The weight $w$ is the Radon-Nikodym derivative $\mathrm{d}\tilde{\mu}/\mathrm{d}t$ \cite[Section~3.2]{boga}, thus becoming a key concept in measure theory. However, in contrast to our preceding development with respect to $\phi$, a new theory with respect to $w$ is not really necessary compared to previous literature: In the literature of fractional calculus, the integrand $x$ has already been considered in the weighted space $\mathrm{L}^p_w[a,T]$, which corresponds to $\mathrm{L}^p([a,T],\mathrm{d}\tilde{\mu})$, and this fact is distinct from our previous exposition where $\mathrm{d}(\mu\circ\phi)(t)$ was not necessarily $\phi'(t)\mathrm{d}\mu(t)$. To summarize, we see that the fractional calculus with respect to a function is based on the ``pushforward measure'' and the fractional calculus with respect to a weight is based on the ``Radon-Nikodym derivative'', with the former requiring a new exposition compared to the latter. This interaction between pushforward measures and Radon-Nikodym derivatives appears in classical branches of mathematics, such as probability theory, where the pushforward measure moves the underlying probability to the probability distribution and the Radon-Nikodym derivative gives the density function; however, the interaction had not been previously specified in fractional calculus.
\end{remark}

\end{document}